\documentclass[12pt]{amsart}
\usepackage{amssymb}
\usepackage{amsmath} 
\usepackage{amsthm} 
\usepackage{calc} 
\usepackage{verbatim} 
\usepackage[dvips]{graphics}
\usepackage[dvips]{epsfig}

\setlength{\marginparwidth}{1in}

%

%

% sequence

% column vector

% Various theorem-type declarations
\newtheorem{thm}{Theorem}[section]
\newtheorem{lem}[thm]{Lemma}

\newtheorem{cor}[thm]{Corollary}
\newtheorem{rem}[thm]{Remark}
\newtheorem{eg}[thm]{Example}
\newtheorem{prop}[thm]{Proposition}

% Homework problems
\newcounter{probno}
\stepcounter{probno}

% Script symbols

% Matrix

% Begin and end a proof
%\newcommand{\proof}{\it Proof. }}

% Make a side comment

% Comment out text
%\newcommand{\ignore}[1]{}

% Generic math macros

% Area

% Volume

% Set of points within distance #2 of set #1

% Boundary 

% Interior

% Inverse: i.e. ``to the -1 power''

% Codimension

% The d-bar operator
%\newcommand{\dbar}{\overline\partial}

% ``maps to itself'' arrow
\newcommand{\self}{\circlearrowleft}

% Unit disk 

% Distance between two points or sets

% Equal to by definition
% \newcommand{\eqdef}{\stackrel{\mathrm{def}}{=}}
\newcommand{\eqdef}{:=}

% Identity map
\newcommand{\id}{\mathrm{id}}

% Matrix groups and algebras

% Least common multiple

% Except for the origin

% The #1 norm of #2
\newcommand{\norm}[2][{}]{\left\|#2\right\|_{#1}}

% The origin

% Pair a linear functional with a vector
\newcommand{\pair}[2]{\left\langle #1,#2 \right\rangle}

% Various number fields
\newcommand{\R}{\mathbf{R}}

\newcommand{\C}{\mathbf{C}}
\newcommand{\cp}{\mathbf{P}}
\newcommand{\Z}{\mathbf{Z}}
\newcommand{\N}{\mathbf{N}}

% Mass of a current

% Support 

% Tangent space (at a point #1)

% Contraction (e.g. of a current with a form)

% Picard group
\newcommand{\pic}{\mathrm{Pic}\,}

% group of divisors

% Emphatic arbitrary constant

% Set

% Absolute Value

%%% Local Variables: 
%%% mode: latex
%%% TeX-master: t
%%% End: 

%%%%%%%%%%%%%%%%%%%%%%%%%%%%%%%%%%%%%%%%%%%%%%%%%%%%%%%%%%%%%%%%
%%%% paper specific macros %%%%%%%%%%%%%%%%%%%%%%%%%%%%%%%%%%%%%
%%%%%%%%%%%%%%%%%%%%%%%%%%%%%%%%%%%%%%%%%%%%%%%%%%%%%%%%%%%%%%%%

\newcommand{\Pic}{\operatorname{Pic}}
\newcommand{\pico}{\operatorname{Pic^0}}
\newcommand{\Aut}{\operatorname{Aut}}
\renewcommand{\eqdef}{:=}
\newtheorem{theo}{Theorem}

%%%%%%%%%%%%%%%%%%%%%%%%%%%%%%%%%%%%%%%%%%%%%%%%%%%%%%%%%%%%%%%%
\setlength{\textwidth}{6.5in} 
\setlength{\textheight}{8.5in} 
\setlength{\headheight}{0in} 
\setlength{\evensidemargin}{0in}
\setlength{\oddsidemargin}{0in}
\setlength{\topmargin}{0in}

%%%%%%%%%%%%%%%%%%%%%%%%%%%%%
%%%%% Dolgachev macros %%%%%%
%%%%%%%%%%%%%%%%%%%%%%%%%%%%%
\newcommand{\creg}{C_{reg}}
\newcommand{\calO}{\mathcal{O}}
\newcommand{\calK}{\mathcal{K}}
\newcommand{\calL}{\mathcal{L}}
\newcommand{\bbK}{\mathbb{K}}

\newcommand{\frako}{\mathfrak{o}}
\newcommand{\frakm}{\mathfrak{m}}

\newcommand{\frakb}{\mathfrak{b}}
\newcommand{\Div}{\textup{Div}}
\newcommand{\ddiv}{\textup{div}}
\newcommand{\beq}{\begin{equation}}
\newcommand{\eeq}{\end{equation}}
\newcommand{\gr}{\textup{gr}}
\newcommand{\cha}{\textup{char}}

\newcommand{\la}{\langle}
\newcommand{\ra}{\rangle}

\markboth{\today}{\today}
\title{Cremona transformations, surface automorphisms and plane cubics}

\date{\today}

\author{Jeffrey Diller}
\address{Department of Mathematics\\
         University of Notre Dame\\
         Notre Dame, IN 46556}
\email{diller.1@nd.edu}

\thanks{This work was supported in part by National Science Foundation grant DMS 06-53678.}
\subjclass{37F99, 14E07, 14H52, 14J50}
\keywords{rational surface, group law, automorphism, Cremona transformation}
\begin{document}
\begin{abstract}
We give a method for constructing many examples of automorphisms with positive entropy on rational complex surfaces. The general idea is to begin with a quadratic Cremona transformation that fixes some cubic curve and then use the `group law' on the cubic to understand when the indeterminacy and exceptional behavior of the transformation can be eliminated by repeated blowing up.
\end{abstract}

\maketitle

\section*{Introduction}
Every automorphism of the complex projective plane $\cp^2$ is linear and therefore behaves quite simply when iterated.  It is natural to seek other rational complex surfaces, for instance those obtained from $\cp^2$ by successive blowing up, that admit automorphisms with more interesting dynamics.  Until recently, very few examples with positive entropy seem to have been known (see e.g. the introduction to \cite{can}).

Bedford and Kim \cite{bk} found some new examples by studying an explicit family of Cremona transformations, i.e. birational self-maps of $\cp^2$.  McMullen \cite{mcm} gave a more synthetic construction of some similar examples.  To this end he used the theory of infinite Coxeter groups, some results of Nagata \cite{Na1, Na2} about Cremona transformations, and important properties of plane cubic curves.  In this paper, we construct many more examples of positive entropy automorphisms on rational surfaces.  Whereas \cite{mcm} seeks automorphisms with essentially arbitrary topological behavior, we limit our search to automorphisms that might conceivably be induced by Cremona transformations of polynomial degree two (\emph{quadratic transformations} for short).  This restriction allows us be more explicit about the automorphisms we find and to make do with less technology, using only the group law for cubic curves (suitably interpreted when the curve is singular or reducible) in place of Coxeter theory and Nagata's theorems.

A quadratic transformation $f:\cp^2\to\cp^2$ always acts by blowing up three (\emph{indeterminacy}) points $I(f) = \{p_1^+,p_2^+,p_3^+\}$ in $\cp^2$ and blowing down the (\emph{exceptional}) lines joining them.  Typically, the points and the lines are distinct, but in general they can occur with multiplicity (see \S 1.2).  Regardless, $f^{-1}$ is also a quadratic transformation and $I(f^{-1}) = \{p_1^-,p_2^-,p_3^-\}$ consists of the images of the three exceptional lines.

Under certain fairly checkable circumstances a quadratic transformation $f$ will lift to an automorphism of some rational surface $X$ obtained from $\cp^2$ by a finite sequence of point blowups.  Namely, suppose there are integers $n_1,n_2,n_3\in\N$ and a permutation $\sigma\in\Sigma_3$ such that $f^{n_j-1}(p_j^-) = p_{\sigma_j}^+$ for $j=1,2,3$.  Then\footnote{We assume here that the $n_j$ are taken to be minimal.  To keep the present discussion simple we also assume that $f^k(p_j^-)\neq f^\ell(p_i^-)$ for any $k,\ell\geq 0$ and $i\neq j$.  We do not make the latter assumption outside this paragraph.  See \S 2.1 for a more complete discussion.} we can in effect cancel all indeterminate and exceptional behavior of $f$ by blowing up the finite sequences $p_j^-,f(p_j^-),\dots,f^{n_j-1}(p_j^-)$.  That is, if $X$ is the rational surface that results from blowing up these segments, then $f$ lifts to an automorphism $\hat f:X\to X$.  General theorems of Gromov \cite{gro} and Yomdin \cite{yom} imply directly that the entropy of this automorphism is $\log\lambda_1$, where the \emph{first dynamical degree} $\lambda_1$ is the spectral radius of the induced pullback operator $\hat f^*$ on $H^2(X,\R)$.   

Bedford and Kim observed (see the discussion surrounding Proposition \ref{fstar}) that the action $\hat f^*$ is entirely determined by $n_1,n_2,n_3$ and $\sigma$.  Hence we say that $\det(\hat f^*-\lambda\,\id)$ is the characteristic polynomial `for the \emph{orbit data} $n_1,n_2,n_3,\sigma$.'  When the $n_j$ are large enough (see \cite[Theorem 5.1]{bk2}), e.g. $n_j\geq 3$ with strict inequality for at least one $j\in\{1,2,3\}$, the characteristic polynomial has a root outside the unit disk, and hence $\hat f$ has positive entropy.  

Accordingly, one way to find positive entropy automorphisms induced by quadratic transformations would be to begin with some fixed quadratic transformation $q$, e.g. $q(x,y) = (1/x,1/y)$, and look for $T\in\Aut(\cp^2)$ such that $f=T\circ q$ \emph{realizes} the orbit data $n_1,n_2,n_3,\sigma$; i.e. so that $f^{n_j-1}(p_j^-) = p_{\sigma_j}^+$ for $j=1,2,3$.  This imposes essentially six conditions on $f$, so it seems plausible that some $T$ in the eight parameter family $\Aut(\cp^2)$ will serve.  However, the degrees of the equations governing $T$ increase exponentially with the $n_j$, and it therefore seems daunting to try to understand their solutions directly. 

A key idea in \cite{mcm}, which we follow here, is to look only at Cremona transformations $f$ that preserve some fixed cubic curve $C$.  Various aspects of such transformations have been studied in several recent papers (e.g. \cite{djs, pan1, pan2, bpv}).  We say that $f$ \emph{properly fixes} $C$ if $f(C) = C$ and no singular point of $C$ is indeterminate for $f$ or $f^{-1}$.  Then $f$ preserves both regular
and singular points $C_{reg},C_{sing}\subset C$ separately, and degree considerations imply that $I(f),I(f^{-1})\subset C$. As a Riemann surface, each connected component of $C_{reg}$ is equivalent to $\C/\Gamma$ for some (possibly rank 0 or 1) lattice $\Gamma\subset\C$.  The equivalence is not uniquely determined, and we assume it is chosen in a geometrically meaningful way; i.e. so that the conclusion of Theorem \ref{group law} below applies.  Under this equivalence, we have that the restriction of $f$ to any component of $C_{reg}$ is covered by an affine transformation $z\mapsto az+b$ of $\C$, with \emph{multiplier} $a\in\C^*$ satisfying $a\Gamma=\Gamma$.  Theorem \ref{quadauts} describes the prevalence and nature of the quadratic transformations that properly fix a given cubic $C$.  For $C$ irreducible, the theorem can be stated as follows.

\begin{theo}
\label{mainthm1}
Let $C\subset \cp^2$ be an irreducible cubic curve.  Suppose we are given points $p_1^+,p_2^+,p_3^+\in C_{reg}$, a multiplier $a\in\C^*$, and a translation $b\in C_{reg}$.  Then there exists at most one quadratic transformation $f$ properly fixing $C$ with $I(f) = \{p_1^+,p_2^+,p_3^+\}$ and $f|_{C_{reg}}:z\mapsto az+b$.  This $f$ exists if and only if the following hold.
\begin{itemize}
 \item $p_1^+ + p_2^+ + p_3^+ \neq 0$;
 \item $a$ is a multiplier for $C_{reg}$;
 \item $a(p_1^+ + p_2^+ + p_3^+) = 3b$;
\end{itemize}
Finally, the points of indeterminacy for $f^{-1}$ are given by $p_j^-= ap_j^+ - 2b$, $j=1,2,3$.
\end{theo}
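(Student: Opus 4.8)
The plan is to split the statement into three parts --- the ``at most one'' assertion (unconditional uniqueness), the necessity of the three bulleted conditions together with the formula $p_j^-=ap_j^+-2b$, and the sufficiency of those conditions --- and to use throughout the following consequence of Theorem~\ref{group law}: a divisor cut on $C$ by a plane curve of degree $d$ has group-law sum $0$ in $C_{reg}$ (in particular $\ell\cap C$ for a line $\ell$, and $Q\cap C$ for a conic $Q$). I will run the argument assuming $p_1^+,p_2^+,p_3^+$ are distinct, leaving the infinitely near configurations to the conventions of \S1.2; write $s=p_1^++p_2^++p_3^+$.

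For uniqueness I would note that a quadratic transformation $f$ with $I(f)=\{p_1^+,p_2^+,p_3^+\}$ has homaloidal net equal to the net $\mathcal N^+$ of conics through these three points, so $f=\phi\circ\nu$ for a single fixed quadratic map $\nu$ with $I(\nu)=\{p_j^+\}$ and some $\phi\in\mathrm{PGL}_3$. Since $\nu$ is birational and $C$ is an irreducible cubic, $\nu(C)=:C_0$ is again an irreducible cubic and $\nu|_C\colon C\to C_0$ an isomorphism of curves. If $f_1,f_2$ both satisfy the hypotheses, then $\phi_1$ and $\phi_2$ restrict to the same map $C_0\to C$, namely $(z\mapsto az+b)\circ(\nu|_C)^{-1}$; as an irreducible cubic contains four points in general position and the only projective transformation fixing these is the identity, $\phi_1=\phi_2$, hence $f_1=f_2$.

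For necessity, suppose such an $f$ exists. Its restriction to each component of $C_{reg}$ is an automorphism, so $z\mapsto az+b$ is an automorphism of $\C/\Gamma$, forcing $a\Gamma=\Gamma$ (the second condition). If $s=0$ the three distinct points are collinear, so $\mathcal N^+$ has a fixed line component and $f$ is not a genuine quadratic transformation; hence $s\neq0$ (the first condition). For the third, pull back a generic line $\ell$: then $f^*\ell$ is a conic $Q$ through $p_1^+,p_2^+,p_3^+$, and $Q\cap C$ consists of these together with $f^{-1}(\ell\cap C)$; since $\ell\cap C$ sums to $0$ and $f^{-1}|_{C_{reg}}$ is $z\mapsto a^{-1}z-a^{-1}b$, the latter triple sums to $-3a^{-1}b$, and as $\deg Q=2$ the whole of $Q\cap C$ sums to $0$, giving $as=3b$. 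Finally, labelling so that the exceptional line contracted by $f$ to $p_j^-$ is $\overline{p_i^+p_k^+}$ with $\{i,j,k\}=\{1,2,3\}$, that line meets $C$ at a third point, identified by the group law as $p_j^+-s$; applying $f$ (which is $z\mapsto az+b$ on $C_{reg}$ and contracts the line to $p_j^-$) gives $p_j^-=a(p_j^+-s)+b=ap_j^+-2b$ after substituting $as=3b$.

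For sufficiency, assume the three conditions and set $p_j^-:=ap_j^+-2b$; since $z\mapsto az-2b$ is an automorphism of $C_{reg}$ (here the multiplier hypothesis enters) each $p_j^-$ lies in $C_{reg}$, and $p_1^-+p_2^-+p_3^-=as-6b=-3b\neq0$ (because $3b=as\neq0$, multiplication by $a$ being a group automorphism), so the $p_j^-$ are distinct and noncollinear. Fix $\nu$ with $I(\nu)=\{p_j^+\}$ and put $C_0=\nu(C)$. I would then show that the isomorphism $g:=(z\mapsto az+b)\circ(\nu|_C)^{-1}\colon C_0\to C$ is induced by a (unique) $\phi\in\mathrm{PGL}_3$: for this it is enough that $g^*\calO_C(1)=\calO_{C_0}(1)$, and pulling back along $\nu|_C$ this is the identity of degree-$3$ line bundles $(z\mapsto az+b)^*\calO_C(1)=(\nu|_C)^*\calO_{C_0}(1)=\calO_C(2)(-p_1^+-p_2^+-p_3^+)$ on $C$, whose classes, traced through the group law, are $-3a^{-1}b$ and $-s$ respectively and hence equal precisely because $as=3b$. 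Then $f:=\phi\circ\nu$ has $I(f)=\{p_j^+\}$, satisfies $f|_{C_{reg}}=(z\mapsto az+b)$ by construction, and properly fixes $C$ since $f(C)=\phi(C_0)=C$ with $I(f)\subset C_{reg}$ and $I(f^{-1})=\{p_j^-\}\subset C_{reg}$ (recomputed as in the necessity step). I expect the main obstacle to be exactly this line-bundle computation --- identifying $(\nu|_C)^*\calO_{C_0}(1)$ with $\calO_C(2)(-\sum p_j^+)$, tracking its class under the precise normalization of the group law provided by Theorem~\ref{group law}, and invoking the fact that an isomorphism of plane cubics is projectively induced exactly when it preserves the hyperplane class --- together with verifying that the singular, reducible, or infinitely near configurations do not disturb any of these steps.
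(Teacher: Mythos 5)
Your proposal is correct, and its necessity half is essentially the paper's own argument: the paper proves Theorem~\ref{mainthm1} as the irreducible case of Theorem~\ref{quadauts}, and there the condition $a(\sum p_j^+)=3b$ is obtained by exactly your conic computation (the pullback of a generic line is a conic through $I(f)$, whose intersection with $C$ sums to zero by Theorem~\ref{group law}), while noncollinearity gives $\sum p_j^+\neq 0$. Where you genuinely diverge is in the derivation of $p_j^-=ap_j^+-2b$ and in existence/uniqueness. For the former, the paper pushes forward a generic line through $p_j^+$, whereas you evaluate $f$ at the third intersection of the exceptional line $\overline{p_i^+p_k^+}$ with $C$; this is slicker but needs a word when that line is tangent to $C$ at a base point (then your ``third point'' lies in $I(f)$ and one must pass to the blowup), a degeneracy the paper's generic-line variant sidesteps automatically. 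For existence, the paper starts from an arbitrary quadratic map with base points $p_j^+$, observes via the degree formula that the image cubic is projectively equivalent to $C$, and then normalizes the multiplier and translation in stages by composing with linear maps fixing $C$ supplied by Proposition~\ref{fixingauts}; you instead fix one $\nu$ with $I(\nu)=\{p_j^+\}$ and produce the linear factor $\phi$ in a single step via the criterion that an isomorphism of plane cubics is projectively induced iff it preserves $\calO(1)$ (the same fact underlying Corollary~\ref{projauts}), with the class computation $-3a^{-1}b=-s$ showing precisely where $as=3b$ enters. Your route buys a direct construction with the prescribed multiplier and translation and makes the role of the third condition transparent; the paper's staged-correction route is what generalizes smoothly to reducible cubics with permuted components, which is the setting of Theorem~\ref{quadauts}. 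Both your write-up and the paper assert, without proof and at the same level of detail, that $\nu|_C:C\to\nu(C)$ is an isomorphism of cubics (a quick arithmetic-genus comparison justifies it), and both defer the infinitely near configurations to the conventions of \S 1.2, so neither point counts as a gap relative to the paper's own standard.
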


Addition in the hypotheses and conclusions of this theorem depends on our identification of $\creg$ with the group $(\C/\Gamma,+)$.  The condition $\sum p_j^+\neq 0$ is equivalent to saying that $I(f)$ is not equal to the divisor obtained by intersecting $C$ with a line.  The third item constrains the translation $b$ for $f|_{C_{reg}}$ up to addition of an inflection point on $C_{reg}$.  It should be pointed out that the ideas underlying Theorem \ref{mainthm1} are not especially new.  Indeed, something similar to this theorem was used by Penrose and Smith \cite{ps} to better understand a restricted version of the family studied in \cite{bk}.

Here we apply Theorem \ref{mainthm1} to study quadratic transformations that fix each of the three basic types of irreducible cubic, and to identify those transformations that lift to automorphisms on some blowup of $\cp^2$.  Our first conclusion is

\begin{theo}
\label{mainthm3}
Let $n_1,n_2,n_3\in\N$, $\sigma\in\Sigma_3$ be orbit data whose characteristic polynomial has a root outside the unit circle.  Suppose that $C$ is an irreducible cubic curve and $f$ is a quadratic transformation that properly fixes $C$ and realizes the orbit data.  Then $C$ is one of the following.
\begin{itemize}
 \item The cuspidal cubic $y=x^3$. 
 \item A torus $\C/\Gamma$ with $\Gamma = \Z+i\Z$ or $\Gamma=\Z+e^{2\pi i/6}\Z$.
\end{itemize}
Both cases occur, but only finitely many sets of orbit data can be realized in the second one.
\end{theo}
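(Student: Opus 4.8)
The plan is to combine the existence criterion of Theorem~\ref{mainthm1} with the constraint that $f$ realizes the given orbit data, and then translate everything into statements about multipliers and the linear dynamics on $C_{reg}$. Since $f$ properly fixes the irreducible cubic $C$, the indeterminacy points lie in $C_{reg}$, $f|_{C_{reg}}$ is covered by $z\mapsto az+b$ with $a\Gamma=\Gamma$, and by Theorem~\ref{mainthm1} we have $p_j^-=ap_j^+-2b$. The orbit data condition $f^{n_j-1}(p_j^-)=p_{\sigma_j}^+$ then becomes a system of affine equations on $C_{reg}$: applying the covering transformation $n_j-1$ times, $a^{n_j-1}(ap_j^+-2b)+b\tfrac{a^{n_j-1}-1}{a-1}=p_{\sigma_j}^+$ (with the obvious limiting form when $a=1$). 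Together with the relation $a(p_1^++p_2^++p_3^+)=3b$ from Theorem~\ref{mainthm1}, this is a closed system constraining $a$, $b$, and the $p_j^+$.

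First I would extract the purely numerical constraint on $a$. Summing the three orbit relations over $j$ and using $\sum p_j^+ = \tfrac{3b}{a}$ produces a single polynomial identity in $a$ alone (the $p_j^+$ drop out because the sum is permutation-invariant). This identity should be recognizable as saying that $a$ is a root of (a factor of) the characteristic polynomial for the orbit data — indeed, the discussion surrounding Proposition~\ref{fstar} shows $\hat f^*$ is determined by $n_1,n_2,n_3,\sigma$, and the multiplier $a$ must be an eigenvalue of $\hat f^*$ restricted to the part of $H^2$ coming from $C$. Since the orbit data has a root of its characteristic polynomial off the unit circle, and $a$ must also satisfy $a\Gamma=\Gamma$, I would next invoke the classification of lattices: if $\Gamma$ has rank $2$, then $a$ is an algebraic integer of absolute value $1$ lying in an imaginary quadratic field, hence a root of unity, and in fact $a\in\{\pm 1,\pm i, \text{primitive }6\text{th roots}\}$ forcing $\Gamma\sim\Z+i\Z$ or $\Z+e^{2\pi i/6}\Z$; if $\Gamma$ has rank $\le 1$, then $C$ is singular — a nodal or cuspidal cubic — and $C_{reg}$ is $\C^*$ or $\C$, so $a$ ranges over $\C^*$ or (for the additive group) must be $1$.

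The node must be excluded. For a nodal cubic, $C_{reg}\cong\C^*$, multiplication is by $a\in\C^*$ with no lattice constraint, but the proper-fixing hypothesis and the combinatorics of how the exceptional curves meet $C$ force an extra relation. I would argue that on a nodal cubic the product of the two `branch coordinates' at the node gives an additional multiplicative invariant that $f$ must preserve, and tracing the orbit data through it shows the characteristic-polynomial root would have to lie on the unit circle — a contradiction. That leaves the cusp $y=x^3$, where $C_{reg}\cong\C$ and $a$ can be arbitrary, so the root-off-the-unit-circle condition is consistent; and the two special tori, where $a$ is a root of unity and the characteristic polynomial's Salem/Lehmer root constrains which $(n_1,n_2,n_3,\sigma)$ admit a compatible choice of $a$, $b$, $p_j^+$ — and since there are only finitely many roots of unity available as $a$ and each pins down a bounded amount of combinatorial data, only finitely many orbit data survive. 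Both the cuspidal and toric cases must then be shown to be nonempty, which I would do by exhibiting explicit solutions (deferred to the subsequent sections treating each cubic type). The main obstacle is the nodal exclusion: unlike the torus case there is no lattice to obstruct $a$, so one genuinely needs the finer geometry of how $I(f)$, $I(f^{-1})$, and the exceptional lines sit on $C$ relative to the node — precisely the kind of bookkeeping that the `group law' interpretation for reducible/singular cubics (Theorem~\ref{group law}) is designed to handle, and the argument will hinge on using it correctly.
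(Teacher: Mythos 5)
There is a genuine gap, and it sits at the heart of the argument: you have the multiplier classification for singular irreducible cubics backwards. The multiplier $a$ is an automorphism of the group $\pico(C)$. For the \emph{nodal} cubic $\pico(C)\cong\C^*$, and the only group automorphisms of $\C^*$ are $z\mapsto z^{\pm1}$, so $a=\pm1$; for the \emph{cuspidal} cubic $\pico(C)\cong(\C,+)$, and every $a\in\C^*$ occurs. Your proposal asserts the opposite (``$a$ ranges over $\C^*$'' for the node, ``must be $1$'' for the additive group), which would prove the negation of the theorem, and it then contradicts itself later when you say $a$ can be arbitrary on the cusp. Because of this inversion, your plan for excluding the node rests on an unproved and unnecessary device (a ``multiplicative invariant'' built from branch coordinates at the node). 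The paper's exclusion (Corollary \ref{nodal}) is immediate from the correct classification: with $a=-1$ one has $f^2|_C=\id$, so all orbit lengths are $\leq 2$, at most six points are blown up, and Proposition \ref{toofew} gives zero entropy (Corollary \ref{minus1}); with $a=+1$, Theorem \ref{plus1} produces an invariant elliptic fibration, again forcing zero entropy.

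A second gap appears in the smooth case. Knowing that $a$ is a root of unity with $a\Gamma=\Gamma$ does not by itself force $\Gamma$ to be the square or hexagonal lattice, because $a=\pm1$ is available on \emph{every} torus (and primitive cube roots on the hexagonal one); your proposal never explains why these multipliers are excluded. The actual mechanism is again Corollary \ref{minus1} and Theorem \ref{plus1}: multipliers $\pm1$ and primitive cube roots of unity are incompatible with positive entropy, which is what leaves only the order-$4$ and order-$6$ multipliers and hence the two special lattices (Corollary \ref{smooth generic}). Your appeal to ``$a$ is an eigenvalue of $\hat f^*$'' is also not established at this point of the paper (it is proved only later, for the cusp, in Theorem \ref{tentative}) and is not needed here. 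Finally, the finiteness claim for the torus case should be argued concretely: since $a$ has order $4$ or $6$, $f|_C$ has finite order, so minimality of the orbit lengths bounds each $n_j$ by that order, leaving only finitely many admissible orbit data; ``each pins down a bounded amount of combinatorial data'' is not an argument. The existence assertions (Example \ref{smoothsymmetric} for the torus, Theorems \ref{tentative} and \ref{actual} for the cusp) you correctly defer, and that part of the plan is fine.
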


When $C$ is a torus, the multiplier of the restriction $f|_C$ is necessarily a root of unity.  The problem with the nodal cubic and tori without additional symmetries is that the multiplier of a realization must be $\pm 1$, which implies (see Corollary \ref{minus1} and Theorem \ref{plus1}) that all roots of the characteristic polynomial lie on the unit circle.  In the case of tori with square or hexagonal symmetries, where multipliers can be $i$ or $e^{\pi i/3}$, one does get realizations lifting to automorphisms with positive entropy.  An interesting feature of these examples is that by passing to a fourth or sixth iterate, one obtains a positive entropy automorphism of a rational surface $X$ that nevertheless fixes the original cubic curve $C$ \emph{pointwise}.  We note that the group of Cremona transformations fixing a cubic was considered in \cite{bla}.

In general, realizations of orbit data by transformations whose multipliers are roots of unity seem to be somewhat sporadic, and we do not know how to characterize them systematically.  We have a better understanding when the multiplier is not a root of unity.

\begin{theo}
\label{mainthm2}
Suppose in the previous theorem that the multiplier $a$ of $f|_{C_{reg}}$ is not a root of unity.  Then
\begin{enumerate}
 \item $C$ is cuspidal;
 \item $a$ is a root of the characteristic polynomial for the given orbit data;
 \item if $n_1=n_2=n_3$, then $\sigma$ is the identity.
 \item if $n_i=n_j$ for $i\neq j$, then $\sigma$ does not interchange $i$ and $j$.
\end{enumerate}
Conversely, when these conditions are met by $C$ and $a$, there is a quadratic transformation $f$, unique up to conjugacy by a linear transformation fixing $C$, such that $f$ realizes the given orbit data, properly fixes $C$
and has multiplier $a$ on $C_{reg}$. Consequently, $f$ lifts to an automorphism on some rational surface $\pi:X\to\cp^2$ whose entropy is $\log\lambda_1$, where $\lambda_1>1$ is Galois conjugate to $a$.
\end{theo}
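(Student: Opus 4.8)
The plan is to establish the necessity of (1)--(4), then carry out the converse construction, and finally deduce the statement about $\hat f$ and its entropy.

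\emph{Necessity.} For (1), recall that a multiplier $a$ satisfies $a\Gamma=\Gamma$, where $\Gamma$ uniformizes the relevant component of $C_{reg}$. If $C$ is smooth then $\Gamma$ has rank two and $a$ is a unit in $\mathrm{End}(\C/\Gamma)$, which is $\Z$ or an order in an imaginary quadratic field; either way its units are roots of unity. If $C$ is nodal then $\Gamma$ has rank one and $a=\pm1$. Since $a$ is not a root of unity, $C$ must be cuspidal, so $C_{reg}\cong(\C,+)$ and $3b=a(p_1^++p_2^++p_3^+)$ determines $b$ uniquely. For (2), let $\pi\colon X\to\cp^2$ be the blowup of the orbit segments $p_j^-,f(p_j^-),\dots,f^{n_j-1}(p_j^-)=p_{\sigma_j}^+$, so that $f$ lifts to $\hat f\in\Aut(X)$ as recalled in the introduction. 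Every blown-up point lies on $C_{reg}$, so the proper transform $\hat C\subset X$ is an anticanonical (again cuspidal) curve, $\hat f(\hat C)=\hat C$, and $\hat f|_{\hat C_{reg}}$ is affine with linear part $a$. Composing restriction to $\hat C$ with the projection $\Pic(\hat C)\to\pico(\hat C)$ determined by the $\hat f$-fixed point of $\hat C_{reg}$ gives a nonzero $\hat f^*$-equivariant map $r\colon\Pic(X)\otimes\C\to\pico(\hat C)\otimes\C\cong\C$ on which $\hat f^*$ acts by multiplication by $a^{-1}$ (it is nonzero because the exceptional classes restrict to distinct points of $\hat C_{reg}$, not all equal to the fixed point). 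Hence $a^{-1}$ is an eigenvalue of $\hat f^*$ on $H^2(X,\C)\cong\Pic(X)\otimes\C$. Since the characteristic polynomial of $\hat f^*$ has a root off the unit circle, by the structure of rational surface automorphisms (cf.\ \cite{bk}) it is a product of cyclotomic polynomials and the irreducible, reciprocal minimal polynomial $S$ of a Salem number $\lambda_1>1$; as $a^{-1}$ is not a root of unity it is a root of $S$, hence so is $a$. This gives (2) and, $S$ being irreducible, shows $a$ is Galois conjugate to $\lambda_1$.

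\emph{Constraints on $\sigma$.} On $C_{reg}=\C$ write $f\colon z\mapsto az+b$. Substituting $p_j^-=ap_j^+-2b$ from Theorem~\ref{mainthm1} and iterating, the realization equations $f^{n_j-1}(p_j^-)=p_{\sigma_j}^+$ become a linear system
\[
a^{n_j}p_j^+ + \gamma_{n_j}\,s = p^+_{\sigma_j},\qquad j=1,2,3,
\]
where $s=p_1^++p_2^++p_3^+\neq0$ and $\gamma_n$ depends only on $a$ and $n$. If $n_i=n_j$ with $\sigma$ interchanging $i$ and $j$, subtracting the $i$-th and $j$-th equations gives $(a^{n_i}+1)(p_i^+-p_j^+)=0$; as $a$ is not a root of unity, $a^{n_i}\neq-1$, forcing $p_i^+=p_j^+$, which is incompatible with realizing $n_1,n_2,n_3,\sigma$. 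This is (4). For (3), if $n_1=n_2=n_3$ then by (4) $\sigma$ fixes no pair, so it is the identity or a $3$-cycle; in the latter case summing the equations yields $3\gamma_n=1-a^n$ (using $s\neq0$), and then the centered system reads $a^nq_j^+=q^+_{\sigma_j}$ with $\sum q_j^+=0$, forcing $a^n$ to be a primitive cube root of unity, again impossible. Hence $\sigma=\id$.

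\emph{The converse.} Now assume $C$ is cuspidal, $a$ is a root of the characteristic polynomial --- hence of $S$ --- but not a root of unity, and (3)--(4) hold. Using the description of $\hat f^*$ from Proposition~\ref{fstar}, one checks that the coefficient determinant of the system above equals the characteristic polynomial for $n_1,n_2,n_3,\sigma$ up to a cyclotomic factor (and a power of the variable); equivalently, a nonzero solution $(p_1^+,p_2^+,p_3^+)$ corresponds to an eigenvector of $\hat f^*$ with eigenvalue $a^{-1}$ recorded on the exceptional classes. Since $a$ is a root of $S$ and not a root of unity, such a solution exists, and it spans a line because $a^{-1}$ is a simple root of $S$ occurring once in the characteristic polynomial. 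Scaling the solution is exactly the action of the group $\mathrm{Lin}(C)\cong\C^*$ of linear automorphisms of $C$ on $C_{reg}$, so the associated quadratic transformation $f$ --- furnished by Theorem~\ref{mainthm1}, with a curvilinear base scheme along $C$ when some $p_j^+$ coincide --- is unique up to conjugacy by such a linear automorphism.

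\emph{The main obstacle, and conclusion.} It remains to verify that this $f$ genuinely \emph{realizes} $n_1,n_2,n_3,\sigma$: that each partial orbit $p_j^-,f(p_j^-),\dots,f^{n_j-2}(p_j^-)$ avoids $I(f)$ (so the $n_j$ are minimal) and that the segments meet one another only as prescribed by $\sigma$. This, I expect, is the crux. Because $a$ has infinite order, $f|_{C_{reg}}$ has a single fixed point and all other orbits are infinite and aperiodic, so any unintended incidence would impose an extra polynomial relation on $a$; showing that no such relation is consistent with $a$ being a root of $S$ --- here one uses (3)--(4) and ``$a$ not a root of unity'' to exclude precisely the resonances not already forced by the orbit data --- completes the argument. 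Granting this, $f$ lifts to $\hat f\in\Aut(X)$ on the blowup $X$ of the orbit segments, and by the theorems of Gromov and Yomdin the topological entropy of $\hat f$ equals $\log\lambda_1$, where $\lambda_1>1$ is the spectral radius of $\hat f^*$ and, as shown above, is Galois conjugate to $a$.
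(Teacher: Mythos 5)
Your necessity arguments for (1) and (2) are fine, and your route to (2) --- restricting to the proper transform of $C$, projecting to $\pico$ via the fixed point of $f|_{C_{reg}}$, and reading off $a^{-1}$ as an eigenvalue of $\hat f^*$ --- is a legitimate alternative (it is essentially McMullen's method, which this paper deliberately replaces by a direct group-law computation). But your proofs of (3) and (4) have a real gap: from the swap hypothesis you deduce $p_i^+=p_j^+$ and declare this ``incompatible with realizing the orbit data.'' It is not, by itself: degenerate quadratic transformations with coincident (infinitely near) indeterminacy points are explicitly allowed here, and in fact the realizations produced by this very theorem with $n_1=n_2=n_3$, $\sigma=\id$ have all of $I(f)$ concentrated at a single point. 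The actual obstruction is finer: if $p_i^-$ is infinitely near $p_j^-$ then the symmetry of $f$ and $f^{-1}$ forces $p_i^+$ infinitely near $p_j^+$, and $n_i=n_j$ forces $p_{\sigma_i}^+$ infinitely near $p_{\sigma_j}^+$; it is the incompatibility of these two ``over/under'' relations with a transposition that kills the swapped case. The same issue appears in your cyclic argument for (3): the centered system $a^nq_j^+=q_{\sigma_j}^+$, $\sum q_j^+=0$ admits the solution $q_1^+=q_2^+=q_3^+=0$ (all base points coincident), so nothing forces $a^n$ to be a cube root of unity there; this degenerate branch must be excluded by the infinitely-near argument, as in the paper's Theorem \ref{actual}.

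The larger gap is the converse, which you yourself flag and leave unproven: that the transformation produced from the eigenvector actually \emph{realizes} the data rather than merely tentatively realizing it. This is the main content of the paper (Theorem \ref{actual}), and your proposed strategy --- show that no unintended incidence relation is consistent with $a$ being a root of the Salem factor --- would fail, because such relations \emph{are} sometimes consistent. The paper's argument replaces each putative relation $f^{n-1}(p_j^-)=p_{\sigma_i}^+$ by a second one via the Galois conjugation $a\mapsto a^{-1}$ (using reciprocity of $P$ from Proposition \ref{fstar}), deduces $n_i+n_j=2n$, and then splits cases: when $n_i\neq n_j$ the incidence genuinely occurs (the orbit of $p_j^-$ passes through $p_i^-$ and hence hits $p_{\sigma_i}^+$ early), and realization survives only because the shorter segment is blown up before the longer one; when $n_i=n_j$ the incidence forces coincident points and is fatal exactly when $\sigma$ swaps the indices. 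So the crux is not ruling incidences out but sorting harmless from fatal ones, and that analysis is missing from your proposal. Secondarily, the determinant identity you dispose of with ``one checks'' (that the $3\times 3$ system is singular precisely when $P(a)=0$, up to cyclotomic factors) is exactly the nontrivial computation behind the paper's formulas \eqref{id}, \eqref{swap}, \eqref{cycle} and the Bedford--Kim formula \eqref{charpoly}; it, together with the simplicity of the relevant root, needs to be carried out rather than asserted.
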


This result is reminiscent of those proved in \S 7 of \cite{mcm}.  In particular, the special cases discussed in \S 11 of that paper are included here.  These fix a cusp cubic and realize orbit data of the form $n_1=n_2=1$, $n_3\geq 8$, with $\sigma$ cyclic.  On the other hand, some of the maps in Theorem \ref{mainthm2} do not appear \cite{mcm}.  For instance, when $n_1=n_2=n_3\geq 4$, $\sigma =\id$, $I(f)$ degenerates to a single point, which is not permitted in McMullen's analysis.  To use the terminology from \cite{mcm}, coincidence of two points in $I(f)$ implies the existence of a `geometric nodal root' for the action $\hat f^*$ of the induced automorphism.

We also consider quadratic transformations fixing reducible cubics $C$, relying on the more general version Theorem \ref{quadauts} of Theorem \ref{mainthm1}.  If $C$ is reducible with one singularity, then things turn out much as they did for the cuspidal cubic.  The arguments used to prove Theorem \ref{mainthm2} remain valid once one accounts for the facts that $f$ permutes the components of $C_{reg}$ and that this permutation must be compatible with the one prescribed in the given orbit data. The end result (Theorem \ref{concurrent lines}) is that one can realize somewhat fewer, though still infinitely many, different sets of orbit data.

If $C$ has two or three singular points, things turn out differently.  Any quadratic transformation $f$ that properly fixes $C$ must have multiplier $f|_{C_{reg}}$ equal to $\pm 1$.  Nevertheless, by judiciously choosing the translations for $f|_{C_{reg}}$ we are still able to realize infinitely many sets of orbit data.  We treat the case $\#C_{sing} = 3$ more thoroughly (see Theorem \ref{actual3}).

\begin{theo}
\label{mainthm4}
Let $n_1,n_2,n_3\geq 1$ and $\sigma\in\Sigma_3$ be orbit data whose characteristic polynomial has a root outside the unit circle.  If the orbit data is realized by some quadratic transformation $f$ that properly fixes $C=\{xyz=0\}$, then $\sigma = \id$, and $f$ maps each component of $C_{reg}$ to itself with multiplier $1$.
Conversely, when $\sigma = \id$ and $n_1,n_2,n_3\geq 6$, there exists at least one such realization.
\end{theo}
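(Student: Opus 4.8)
\medskip
\noindent\emph{Proof strategy.}
Write $C=\{xyz=0\}=\ell_1\cup\ell_2\cup\ell_3$; its singular locus consists of the three vertices, and $C_{reg}=\ell_1^\circ\sqcup\ell_2^\circ\sqcup\ell_3^\circ$ with each $\ell_i^\circ\cong\C^*\cong\C/(2\pi i\Z)$.

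\emph{Necessity.} Suppose $f$ properly fixes $C$ and realizes $(n_1,n_2,n_3,\sigma)$. Since $f$ fixes $C$ and no vertex is indeterminate for $f$ or $f^{-1}$, $f$ permutes $\ell_1,\ell_2,\ell_3$ by some $\tau\in\Sigma_3$, and permutes the vertices compatibly. The exceptional lines of $f$ join the three points of $I(f)$, which are not collinear because $\sum p_j^+\ne0$; hence none of them is a line of $C$, and a short degree count using $f(C)=C$ forces $I(f)$, and likewise $I(f^{-1})$, to meet each $\ell_i$ in exactly one point. Let $\kappa,\kappa'\colon\{1,2,3\}\to\{1,2,3\}$ record which line contains $p_j^+$, resp.\ $p_j^-$. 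Because each $\ell_i^\circ$ has period lattice $2\pi i\Z$, the multiplier of $f|_{C_{reg}}$ is $\pm1$; Corollary \ref{minus1} excludes $-1$ (it would put every eigenvalue of $\hat f^*$ on the unit circle), so it equals $1$. Next, the exceptional line of $f$ collapsing to $p_j^-$ passes through the two indeterminacy points other than $p_j^+$; being disjoint from $I(f)$ off those two points, it meets $\ell_{\kappa(j)}$ at a point sent to $p_j^-$, so $p_j^-\in f(\ell_{\kappa(j)})=\ell_{\tau(\kappa(j))}$, i.e.\ $\kappa'=\tau\circ\kappa$. Iterating $f$ along the segment $p_j^-,\dots,f^{\,n_j-1}(p_j^-)=p_{\sigma(j)}^+$ and tracking which component each point lies on yields
\begin{equation}\label{eq:bk4}
\tau^{\,n_j}\bigl(\kappa(j)\bigr)=\kappa\bigl(\sigma(j)\bigr),\qquad j=1,2,3.
\end{equation}
A suitable form of Theorem \ref{plus1} (its hypothesis of multiplier $1$ is met) now shows that, unless all eigenvalues of $\hat f^*$ lie on the unit circle---which is excluded---$f$ must map each component of $C_{reg}$ to itself, i.e.\ $\tau=\mathrm{id}$; and then \eqref{eq:bk4} reads $\kappa(j)=\kappa(\sigma(j))$, so $\sigma=\mathrm{id}$ by injectivity of $\kappa$. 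This proves the first assertion.

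\emph{Sufficiency.} Assume $\sigma=\mathrm{id}$ and $n_1,n_2,n_3\ge6$; then $\sum 1/n_j\le1/2<1$, so the characteristic polynomial indeed has a root outside the unit circle, and it remains to produce $f$. With $\tau=\mathrm{id}$ and multiplier $1$, each $f|_{\ell_j^\circ}$ is a scaling $w\mapsto c_jw$; relabel so that $p_j^+\in\ell_j^\circ$ has coordinate $w_j$, and pick coordinates in which three points, one on each $\ell_i^\circ$, are collinear exactly when their coordinates multiply to $1$. By Theorem \ref{quadauts} such an $f$ exists for any admissible $(c_1,c_2,c_3;w_1,w_2,w_3)$, with $p_j^-\in\ell_j^\circ$ of coordinate $c_j/(w_kw_l)$ ($\{j,k,l\}=\{1,2,3\}$); imposing $f^{\,n_j-1}(p_j^-)=p_j^+$ gives $c_j^{\,n_j}=w_1w_2w_3=:P$ for all $j$, while the analogue of the third bullet of Theorem \ref{mainthm1} expresses $P$ as a fixed monomial in $c_1,c_2,c_3$. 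These relations force $P$, hence every $c_j$, to be a root of unity; one then takes $w_1w_2w_3=P$ with $w_1,w_2,w_3$ otherwise free. For $f$ to lift to an automorphism, each segment $\{f^k(p_j^-):0\le k\le n_j-1\}$ must consist of $n_j$ distinct points, none but the last lying in $I(f)$; as it lies in $\ell_j^\circ$ and meets $I(f)$ only possibly at $p_j^+=f^{\,n_j-1}(p_j^-)$, this amounts to $\mathrm{ord}(c_j)\ge n_j$. Passing to the additive model, the solutions of the displayed system form a finite group of order $|D|$ with $D=n_1n_2n_3-n_1n_2-n_2n_3-n_1n_3$, whose pertinent coordinate attains order $|D|/\gcd(n_1,n_2,n_3)$; since $|D|\ge n_1n_2n_3/2$ for $n_j\ge6$, one can choose a solution with $\mathrm{ord}(c_j)\ge n_j$ for all $j$ and $P\ne1$ (so the $p_j^+$ are not collinear). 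The three segments sit on distinct lines of $C$, hence are disjoint, so blowing them up yields $\pi\colon X\to\cp^2$ to which $f$ lifts as $\hat f\in\Aut(X)$, with $\hat f^*$ carrying the orbit data's characteristic polynomial by Proposition \ref{fstar}; since that polynomial has a root outside the unit circle, Gromov's and Yomdin's theorems give $\hat f$ positive entropy.

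\emph{Main difficulty.} The heart of the matter is ruling out a nontrivial component permutation $\tau$ in the necessity part: this needs Theorem \ref{plus1} in a shape robust under cyclic shuffling of the components of $C_{reg}$, combined with the elementary but fiddly case analysis fed by \eqref{eq:bk4}. In the sufficiency part, the one delicate point is the count guaranteeing, for $n_1,n_2,n_3\ge6$, a root of unity $c_j$ of order at least $n_j$ for all three $j$ simultaneously; the bound is generous, not sharp, matching the theorem's claim of merely infinitely many realizable orbit data.
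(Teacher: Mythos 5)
Your overall strategy (multiplier $\pm 1$ since $\pico(C)\cong\C^*$, eliminate $-1$ and nontrivial component permutations, then realize the data by translations that are roots of unity) is the same as the paper's, but both halves have genuine gaps. In the necessity part, your appeal to Corollary \ref{minus1} to exclude multiplier $-1$ does not work: that corollary assumes $f$ fixes each irreducible component of $C$, and for $C=\{xyz=0\}$ a multiplier of $-1$ occurs \emph{precisely} when $f$ transposes two of the lines (if $f$ fixed all three lines it would fix the three vertices, forcing multiplier $+1$). So the case you must kill is ``multiplier $-1$ with a transposed pair of lines,'' and neither Corollary \ref{minus1} nor Theorem \ref{plus1} (which requires a transitive permutation of components) covers it. The paper handles it by a separate argument: the fixed line contains $p_1^{\pm}$, and $f^2$ restricts to the identity there, so $n_1\in\{1,2\}$; if $n_1=2$ the relations of Theorem \ref{quadauts} force $b_1+b_2+b_3=0$ and hence collinear indeterminacy points, a contradiction, while $n_1=1$ with $\sigma(1)=1$ puts all roots of the characteristic polynomial on the unit circle by Proposition \ref{olength1}. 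Without this (or an equivalent), your claim that the multiplier is $1$, and hence your subsequent reduction of $\tau$ to the identity or a $3$-cycle, is unsupported.

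In the sufficiency part, your reduction to the system $c_j^{n_j}=P=c_1c_2c_3$ matches the paper's $n_jb_j=b \pmod 1$ (Proposition \ref{tentative3}), but the final step is asserted rather than proved. Knowing the solution group has order $|D|$ with $D=n_1n_2n_3-n_1n_2-n_2n_3-n_3n_1\ge n_1n_2n_3/2$ does not by itself produce a solution with $\operatorname{ord}(c_j)\ge n_j$ for all three $j$ simultaneously (and $P\neq1$): the number of roots of unity of order $<n_j$ grows quadratically in $n_j$, so a crude union bound over bad solutions does not close, and the phrase ``whose pertinent coordinate attains order $|D|/\gcd(n_1,n_2,n_3)$'' concerns a single coordinate in any case. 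That this count is genuinely delicate is shown by the paper's list of non-realizable data ($n_1=n_2=n_3=4$, $n_2+n_3\le 6$, etc.). The paper instead proves realizability by explicit choices of the integers $(m_1,m_2,m_3)$ in the parametrization \eqref{mtob} (e.g.\ $m_1=1$, $m_2=m_3=0$, with variants in the various cases) and a direct verification that $0<\ell b_j<1$ for $\ell$ in the required range; some such case-by-case verification, or a sharper counting argument, is needed to make your last step a proof.
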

The proof amounts to an extended exercise in arithmetic mod 1.  Unlike Theorem \ref{mainthm2}, the conclusion gives  little idea of how many different realizations are possible.  We simply show that for any given orbit data, there are finitely many quadratic transformations that might serve as realizations, and then we find one candidate from among these that works.

We deal more briefly with the case where $C$ has two irreducible components meeting transversely, i.e. $C=\{(xy-z^2)z=0\}$,  showing that one can realize only two broad types of orbit data on this curve and then giving examples of each type.

The remainder of the paper is organized as follows.  \S 1 provides background on plane cubics and quadratic transformations, culminating in the proof of Theorem \ref{quadauts}. \S 2 begins by considering when and how a quadratic transformation can be lifted to an automorphism $\hat f:\hat X\self$. It then discusses the nature of the associated operator $\hat f^*:H^2(X,\R)\to H^2(X,\R)$, which can be written down very explicitly and fairly simply in terms of the given orbit data.  In \S 3 we seek automorphisms induced by quadratic transformations that properly fix irreducible cubics, and in \S 4 we treat the reducible case.  The Appendix to this paper, which was contributed by Igor Dolgachev, gives a detailed treatment of the group law on reduced plane cubics that includes the case of singular and reducible curves.

First and foremost, we would like to thank Igor Dolgachev for his extensive help concerning the geometry of plane cubics.
We would also like to thank Eric Riedl, Kyounghee Kim, and Eric Bedford for their comments and attention as this paper was written.  

\section{Quadratic transformations fixing a cubic}

In this section, we recount some well-known facts about cubic curves and quadratic Cremona transformations in the plane.  Then we characterize those quadratic transformations that `properly' fix a given cubic.  We refer the reader to the recent article \cite{CeDe} for more discussion of quadratic transformations.

\subsection{The `group law' on plane cubics}
Let $C\subset \cp^2$ be a \emph{cubic curve}; that is, $C$ is defined by a degree three homogeneous polynomial without repeated factors.
Hence $C$ has at most three irreducible components $V\subset C_{reg}$, each isomorphic after normalization to either a torus (when $C$ is irreducible and smooth) or $\cp^1$ (in all other cases).  We begin by recalling some facts that are discussed at greater length in the Appendix.

The Picard group $\Pic(C)$ consists of linear equivalence classes $[D]$ of Cartier divisors $D$ on $C$, and the subgroup $\pico(C)\subset \Pic(C)$ consists of divisor classes whose restrictions to each irreducible component have degree zero.  In fact, one always has $\pico(C) \cong \C/\Gamma$ where $\Gamma\subset\C$ is a lattice of rank $2$, $1$, or $0$ depending on whether $C$ has no singularities, nodal singularities, or otherwise.  Moreover, for any irreducible $V\subset C$ and any choice of `origin' $0_V\in V$, one has a bijection $\kappa:V\cap C_{reg} \to \pico(C)$ given by $\kappa(p) = [p - 0_V]$ that allows us to regard the smooth points $V\cap C_{reg}$ in $V$ as a group isomorphic to $\pico(C)$.  We will always use $+$ to denote the group operation, even when $\Gamma\cong \Z$ has rank 1 and $\pico(C) \cong \C^*$.  

Having fixed origins in each irreducible component of $C$, we will write $p_1 + p_2\sim p_3$ for any $p_1,p_2,p_3\in C_{reg}$ to mean that $\kappa(p_1) + \kappa(p_2) = \kappa(p_3)$; in other words the $\sim$ implies that any point $p\in C_{reg}$ that appears in the equation is implicitly identified with the point $\kappa(p)\in \pico(C)$.  Note that we do not require $p_1,p_2,p_3$ to lie on the same irreducible component of $C$, even though we have not given $C_{reg}$ itself the structure of a group (this can be done; see the Appendix).  We further caution that with our convention, `$\sim$' does not denote linear equivalence. In fact, since the choice of origins $0_V$ is a priori arbitrary, the equation $p_1+p_2\sim p_3$ need not have much geometric content at all.  To make such equations more meaningful, we will assume that the origins are chosen to satisfy
\begin{itemize}
 \item[($*$)\hspace*{.2in}] $\sum_{V\subset C} (\deg V)\cdot 0_V$ is the divisor cut out by a line $L_0\subset\cp^2$.
\end{itemize}
This condition guarantees that three points $p_1,p_2,p_3 \subset C_{reg}$ are the intersection (with multiplicity) of $C$ with a line $L\subset\cp^2$ if and only if each irreducible $V\subset C$ contains $\deg V$ of the points, and $x+y+z\sim 0$.  More generally, we have the following classical fact.

\begin{thm}
\label{group law}
Suppose that the projection $\kappa:C_{reg}\to \pico(C)$ is chosen to satisfy ($*$).  Then $3d$ (not necessarily distinct) points $p_1,\dots, p_{3d}\in C_{reg}$ comprise the intersection of $C$ with a curve of degree $d$ if 
and only if 
\begin{itemize}
 \item each irreducible $V\subset C$ contains $d\cdot\deg V$ of the points; and 
 \item $\sum p_j \sim 0$.
\end{itemize}
\end{thm}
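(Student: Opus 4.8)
The plan is to establish Theorem~\ref{group law} by induction on the degree $d$, using the case $d=1$ (condition ($*$)) as the base, and effecting the inductive step by the classical trick of passing to a line plus a curve of degree $d-1$. First I would clear away the degenerate issue: a curve of degree $d$ here means an effective divisor in the linear system $|\calO_{\cp^2}(d)|$, and we must make sense of ``$C$ meets it in $3d$ points of $C_{reg}$'' when the curve contains a component of $C$. The honest statement is that we consider only degree-$d$ curves $D$ not containing any component of $C$, so that $C\cap D$ is a well-defined effective divisor of degree $3d$ on $C$ by B\'ezout; then the two bulleted conditions say that this divisor has the prescribed multidegree and sums to $0$ in $\pico(C)$. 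One direction of the base case $d=1$ is exactly ($*$) together with the observation that moving a line $L$ changes the divisor $C\cap L$ by a principal divisor, hence does not change its class in $\Pic(C)$, and in particular not the induced element of $\pico(C)$ once we subtract $\sum(\deg V)0_V$; the reverse direction (that any $3$ points with the right multidegree and vanishing sum actually arise from a line) is the standard fact that a degree-$1$ divisor class on $\cp^2$ restricted to $C$ is represented by a genuine line section, which follows from the exact sequence $0\to\calO_{\cp^2}(-2)\to\calO_{\cp^2}(1)\to\calO_C(1)\to 0$ and $H^1(\cp^2,\calO(-2))=0$, or more elementarily from dimension counting in the pencil of lines through two of the points.

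The inductive step goes as follows. Suppose the theorem holds for degrees $1,\dots,d-1$. Given $3d$ points $p_1,\dots,p_{3d}\in C_{reg}$ (with multiplicity) satisfying the two conditions for degree $d$, I would peel off a line: choose a line $L$ whose intersection with $C$ consists of three of the points, say $q_1,q_2,q_3\in C_{reg}$, with $q_1+q_2+q_3\sim 0$ and the correct multidegree $(\deg V)$ — this is possible by the $d=1$ case provided we can select such a triple from among the $p_j$, which in turn requires a short combinatorial argument about redistributing the multidegree. (When some component $V$ is a line, one may be forced to take $L=V$ itself; I would handle that as a mild special case, or avoid it by a perturbation/limiting argument.) Subtracting, the remaining $3(d-1)$ points $\{p_j\}\setminus\{q_1,q_2,q_3\}$ have multidegree $(d-1)\deg V$ on each $V$ and sum to $0$ in $\pico(C)$, so by the inductive hypothesis they are cut out by a curve $D'$ of degree $d-1$; then $L\cup D'$ is the desired degree-$d$ curve. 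Conversely, if the $3d$ points are cut out by a degree-$d$ curve $D$, pick any line $L$ meeting $C$ in three smooth points, use the $d=1$ case to see those sum to $0$, and observe that $C\cap(L\cup D)$ and $C\cap D''$ for any other degree-$d$ curve $D''$ in the same linear system differ by a principal divisor on $C$ — hence have the same image in $\pico(C)$; choosing $D''$ appropriately (or running the peeling argument in reverse) shows $\sum p_j\sim 0$ and the multidegree condition is immediate from B\'ezout applied component by component.

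I expect the main obstacle to be bookkeeping around multiplicities and reducibility rather than any deep input. Concretely: (i) making sure that when points $p_j$ are repeated the line $L$ and curve $D'$ genuinely pass through them to the required order — this is where one wants B\'ezout with intersection multiplicities and the fact that linear systems on $\cp^2$ impose independent conditions generically, but tangency conditions at singular-adjacent points need care; (ii) the case where a component of $C$ is itself a line, forcing $C\cap L$ to be nontransverse or $L$ to be a component, which must be folded into condition ($*$) correctly; and (iii) verifying that the chosen origins $0_V$, constrained only by ($*$), make the ``$\sim 0$'' condition independent of the auxiliary choices (of $L$, of $D$ within its system) — this is really the assertion that $[C\cap D] - d\sum(\deg V)0_V \in \pico(C)$ depends only on the linear equivalence class of $D$ on $\cp^2$, which is the functoriality of restriction $\Pic(\cp^2)\to\Pic(C)$ composed with the splitting $\Pic(C)\to\pico(C)$ determined by the multidegree. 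None of these is hard, but organizing them cleanly — especially the simultaneous induction on $d$ and the handling of components that are lines — is the part that requires attention, and I would lean on the Appendix's treatment of $\pico(C)$ for singular and reducible $C$ to justify that linear equivalence on $C$ is detected by principal divisors of rational functions in the usual way.
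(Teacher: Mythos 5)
Your proposed inductive step contains a genuine gap. To peel off a line you need a line $L$ whose intersection with $C$ consists of \emph{three of the given points} $p_j$ (with the right multidegree). But the hypotheses only say that the total sum of all $3d$ points vanishes in $\pico(C)$; they give no sub-triple whose sum vanishes, and by your own base case the third intersection point of $C$ with the line through two of the $p_j$ is determined by the group law and is generically \emph{not} among the $p_j$. Already for $d=2$ and $C$ a smooth cubic, six general points with $\sum p_j\sim 0$ contain no collinear triple, so no amount of ``redistributing the multidegree'' produces the required $L$ and the induction never gets started. (Peeling works in the trivial direction, where the degree-$d$ curve you already have happens to contain a line, not in the direction where you must construct the curve.) The converse direction as written also compares $C\cap(L\cup D)$, a divisor of degree $3(d+1)$, with $C\cap D''$ for $D''$ of degree $d$; what you actually want is simply to compare $C\cap D$ with $C\cap(dL_0)$ inside the same linear system.

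The repair is to run your $d=1$ cohomological argument uniformly in $d$, with no induction at all; this is the classical one-step proof, which is why the paper states the theorem without proof in the body and only establishes the $d=1$ normalization ($*$) in the Appendix (Proposition \ref{P1}), by exactly this invertible-sheaf argument. Condition ($*$) says $\sum_V(\deg V)\,0_V=C\cap L_0$, so for $p_j\in V_j\cap C_{reg}$ one has $\sum_j\kappa(p_j)=\bigl[\,\sum_j p_j-d\,(C\cap L_0)\,\bigr]$ whenever each $V$ carries $d\deg V$ of the points; hence the two bulleted conditions say precisely that the effective divisor $D=\sum p_j$ satisfies $\calO_C(D)\cong\calO_C(d)$, the restriction of $\calO_{\cp^2}(d)$. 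From the exact sequence $0\to\calO_{\cp^2}(d-3)\to\calO_{\cp^2}(d)\to\calO_C(d)\to 0$ and $H^1(\cp^2,\calO_{\cp^2}(d-3))=0$, the section of $\calO_C(d)$ with divisor $D$ lifts to a degree-$d$ form $F$; since the restriction of $F$ to $C$ has zero divisor of finite support (cf. Lemma \ref{L1}), $F$ vanishes identically on no component of $C$, and $C\cap\{F=0\}=D$. Conversely, if $D=C\cap\{F=0\}$ for a degree-$d$ form $F$ not vanishing on a component, then $\calO_C(D)\cong\calO_C(d)\cong\calO_C(d\,(C\cap L_0))$, which gives the multidegree condition component by component and $\sum p_j\sim 0$ via the same identity. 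This removes the induction, the combinatorial selection of collinear triples, and the special-casing of line components that your outline flags as obstacles.
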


Before continuing, let us quickly recapitulate this discussion in more analytic terms: the various connected components $V\cap C_{reg}$ of $C_{reg}$ are all isomorphic as Riemann surfaces to the same surface $\C/\Gamma$.
These isomorphisms are determined only up to affine transformations, but they may be chosen so that for two lines $L_0,L_1\subset C_{reg}$ we have that the three points (counted with multiplicity) $L_j\cap C$ all lie in $C_{reg}$ and are identified with three points summing to zero in $\C/\Gamma$.  This choice being equivalent to condition ($*$), it follows that Theorem \ref{group law} holds.  

In all cases except that of a smooth cubic whose single irreducible component is not rational, the projection $\kappa:C_{reg}\to \C/\Gamma\cong\pico(C)$ can be written down quite explicitly.  For instance, when $C$ is a cusp cubic, then $\pico(C) \cong \C$ and we can choose coordinates on $\cp^2$ so that $C_{reg} = \{y=x^3:x\in\C\}$.  We then define $\kappa(x,x^3) = x$.  Or if $C$ is a union of a conic and a secant line, then $\pico(C) \cong \C^*$ and we may assume $C=\{z(z^2-xy)=0\}$.  A suitable projection is then given by mapping $[1:-t:0], [t^2:1:t]\mapsto t$ for all $t\in\C^*$.  

We will say that $T\in\Aut(\cp^2)$ \emph{fixes} (or \emph{leaves invariant}) $C$ if $T(C) = C$ as sets.  That is, $T$ restricts to an automorphism of $C$ and therefore induces a map $T^*:\pic(C)\to \pic(C)$ whose restriction
to $\pico(C)$ is group automorphism given by $t\in \C/\Gamma\mapsto a^{-1}t$ for some \emph{multiplier} $a\in\C^*$ satisfying $a\Gamma = \Gamma$.  Explicitly, the possible multipliers $a\in\C^*$ are as follows.
\begin{itemize} 
 \item if $C$ smooth and irreducible, $a=\pm 1$ generically, but $a=i^k$ when $C = \C/(\Z+i\Z)$, and $a=e^{\pm\pi ik/3}$ when $C = \C/(\Z+e^{\pi i/3}\Z)$;
 \item if $C$ has nodal singularities, $a=\pm 1$;
 \item in all other cases, arbitrary any $a\in\C^*$ is possible. 
\end{itemize}
Now if $V\subset C$ is any irreducible component, and $p\in V$, then $[T(p) - 0_{T(V)}] = [T(p) - T(0_V)] + [T(0_V) - 0_{T(V)}] = a[p-0_{T(V)}] + b_V$, where $a$ is the multiplier corresponding to $T^*$ and 
$b_V := [T(0_V) - 0_{T(V)}] \in \pico(C)$ is the \emph{translation} for $T|_V$.  More succinctly, using our convention above, we have that $T:V\to T(V)$ is an `affine transformation' described by $T(p) \sim ap + b_V$ for all $p\in V$.  Since $T$ sends lines to lines and we are assuming that condition ($*$) holds, it follows that $\sum_{V\subset C} (\deg V)\cdot T(0_V) \sim 0$.  Indeed it is shown in the Appendix (Corollary \ref{projauts}) that 

\begin{prop}
\label{fixingauts}
Let $T\in\Aut(\cp^2)$ be a linear transformation fixing a cubic curve $C$.  Then the translations $b_V$ for the restrictions $T|_V$ of $T$ to the various irreducible components $V\subset C$ satisfy 
$\sum (\deg V)\cdot b_V \sim 0$.  Conversely, given translations subject to this condition and a multiplier $a$ for $\pico(C)$, there exists a unique $T\in\Aut(\cp^2)$ fixing each component $V\subset C$ with multiplier $a$ and translations $b_V$.
\end{prop}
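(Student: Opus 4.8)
The plan is to reduce the statement to a dimension count combined with the already-stated uniqueness phenomenon for maps fixing $C$, using the close relationship between linear automorphisms of $\cp^2$ that preserve $C$ and sections of the line bundle $\calO_{\cp^2}(1)$ restricted to $C$. First I would treat the forward implication. If $T\in\Aut(\cp^2)$ fixes $C$, then for any two lines $L_0,L_1$ the divisors $L_0\cap C$ and $T^{-1}(L_1)\cap C = L_2\cap C$ (where $L_2 := T^{-1}(L_1)$ is again a line) are each cut out by lines, so by Theorem \ref{group law} both sum to $0$ in $\pico(C)$. Applying $T$, the image divisor $T(L_2\cap C) = L_1\cap C$ also sums to zero. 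Now writing $T$ on each component $V$ as $p\mapsto ap+b_V$ and summing the relation over the $\deg V$ points of $L_2\cap C$ lying on each $V$, the left side becomes $\sum_V(\deg V)(a\cdot 0 + b_V) = \sum_V (\deg V) b_V$ up to the contribution of the points of $L_2\cap C$ (which sum to $0$ scaled by $a$), and the right side is $0$ because $L_1\cap C$ sums to zero. This gives $\sum_V(\deg V)b_V\sim 0$. (One must be slightly careful tracking how the multiplier $a$ interacts with $\kappa$—recall $T^*$ acts by $t\mapsto a^{-1}t$ on $\pico$—but the bookkeeping is routine and the $a$-dependent terms cancel because the divisor of a line sums to zero regardless.)

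For the converse, fix a multiplier $a$ for $\pico(C)$ and translations $b_V$ with $\sum_V(\deg V)b_V\sim 0$. I would first establish \emph{uniqueness}: if $T_1,T_2$ both fix each component of $C$ with the same multiplier and translations, then $T_2\circ T_1^{-1}$ fixes $C$ and restricts to the identity on $C_{reg}$; since $C_{reg}$ contains infinitely many points in general position (it is Zariski-dense in $C$ and $C$ is not contained in a line), a linear map fixing $C$ pointwise must be the identity—this uses that $C$ spans $\cp^2$, so fixing enough points of $C$ pins down $T$. Then for \emph{existence}: the data $(a,(b_V))$ determine an abstract automorphism $\phi$ of the variety $C$ (this is essentially the content of the appendix, and follows because each component is $\cp^1$ or a torus on which affine maps with prescribed multiplier and translation exist, and the compatibility of how components are glued at singular points of $C$ is exactly what condition ($*$) together with $\sum(\deg V)b_V\sim 0$ guarantees). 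The task is to show $\phi$ extends to a linear automorphism of $\cp^2$. Because $\deg C = 3 \geq 3$, the restriction map $H^0(\cp^2,\calO(1))\to H^0(C,\calO(1)|_C)$ is an isomorphism (both spaces are $3$-dimensional: the cokernel sits in $H^1(\cp^2,\calO(-2))=0$ and there are no linear forms vanishing on $C$); hence $\phi^*$ acting on $H^0(C,\calO(1)|_C)$—which is well-defined precisely because $\phi$ sends line sections to line sections, a consequence of Theorem \ref{group law} applied to $\phi$'s action on degree-$3$ divisors summing to zero—transfers to a linear automorphism $T$ of $\cp^2$, and by construction $T|_C = \phi$.

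The step I expect to be the main obstacle is verifying that the abstract automorphism $\phi$ of $C$ genuinely sends the divisor $L\cap C$ of every line $L$ to another such divisor—equivalently, that $\phi$ acts on the subgroup of $\Pic(C)$ (or the appropriate coset) corresponding to hyperplane sections. On each component this follows from the affine-map description, but one must check that the \emph{combined} divisor across components behaves correctly: given $p_1,\dots,p_{3}\in C_{reg}$ with the right degrees on each $V$ and $\sum p_j\sim 0$, one needs $\sum \phi(p_j)\sim 0$ as well, and this is exactly where $\sum_V(\deg V)b_V\sim 0$ is used, since $\sum_j\phi(p_j)\sim a\sum_j p_j + \sum_V(\deg V)b_V \sim a\cdot 0 + 0 = 0$. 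After that, the identification of line sections on $C$ with genuine lines in $\cp^2$ via $H^0(\cp^2,\calO(1))\cong H^0(C,\calO(1)|_C)$ is what produces the actual element of $\Aut(\cp^2)=\mathrm{PGL}_3(\C)$; I would reference Corollary \ref{projauts} in the appendix for the full details of this correspondence rather than reproving it here.
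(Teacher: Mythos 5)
Your overall route is essentially the paper's own. The paper proves Proposition \ref{fixingauts} by deferring to Corollary \ref{projauts} of the Appendix, whose mechanism is exactly your key step: an automorphism $\sigma$ of $C$ is projective if and only if $\sigma^*\calO_C(1)\cong\calO_C(1)$, which is then read off from $\calO_C(1)\cong\calO_C\bigl(\sum(\deg V)\cdot 0_V\bigr)$ (condition ($*$)) together with the identification $H^0(\cp^2,\calO(1))\cong H^0(C,\calO_C(1))$; your forward computation with a generic line is the computation the paper carries out in the paragraph immediately preceding the proposition, and your uniqueness argument (a linear map fixing $C$ pointwise is the identity) is the same device the paper uses for uniqueness in Theorem \ref{quadauts}. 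So for the necessity of the condition, the line-section preservation, and the extension to $\mathrm{PGL}_3$ via the complete linear system, your proposal matches the intended proof.

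The one step you gloss, and justify incorrectly, is the existence of the abstract automorphism $\phi\in\Aut(C)$ with prescribed multiplier $a$ and translations $b_V$ fixing each component. You assert that the compatibility of the componentwise affine maps at the singular points ``is exactly what condition ($*$) together with $\sum(\deg V)b_V\sim 0$ guarantees.'' It is not: those conditions govern projectivity (preservation of the hyperplane class), and have nothing to do with whether the affine maps of $V\cap C_{reg}$ extend over $C_{sing}$ and agree on shared singular points. That patching is a separate verification (pass to the normalization, check the map preserves the subsheaf defining $\calO_C$ at a cusp or node, and check maps on adjacent components agree at common nodes), and it genuinely constrains the data when $C$ is reducible: for $C=\{xyz=0\}$, any automorphism fixing each line fixes each node and therefore acts with multiplier $+1$ on $\pico(C)\cong\C^*$ -- a fact the paper itself uses in Theorem \ref{3linenogos} -- so for $a=-1$ no component-fixing $\phi$ (hence no $T$) exists, and your argument would break precisely at the step you waved through. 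The paper sidesteps this by taking $\sigma\in\Aut(C)$ as given in Corollary \ref{projauts} (i.e.\ by restricting attention to multipliers actually realized by component-fixing automorphisms, per the Appendix's description of $\Aut(C)$). To repair your write-up, either impose that restriction explicitly or carry out the patching check case by case; as written, that sentence is the gap.
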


When $C$ is irreducible, the condition on the translations may be stated more simply by saying that the translation corresponds to an inflection point of $C_{reg}$.  When the cubic $C$ is union of three lines, then it is easy to find automorphisms of $\cp^2$ that permute the lines arbitrarily.  Therefore in this case, the transformation $T$ in the final statement of the theorem can alternately be chosen to permute the lines in any desired fashion.

\subsection{Quadratic Cremona transformations}

The most basic non-linear Cremona (i.e. birational) transformation $q:\cp^2\to\cp^2$ can be expressed in homogeneous coordinates as $[x:y:z]\mapsto [yz:zx:xy]$.  Geometrically, $q$ acts by blowing up the points $[0:0:1]$, $[0:1:0]$, $[1:0:0]$ and then collapsing the lines $\{x=0\}$, $\{y=0\}$, $\{z=0\}$ that join them.  A generic quadratic Cremona transformation can be obtained from $q$ by pre- and post- composing with linear transformations $f=L\circ q \circ L'$.

In fact, every quadratic transformation (we henceforth omit the word `Cremona') $f$ can be obtained geometrically by blowing up three points $p_1^+,p_2^+,p_3^+$ and collapsing three rational curves.  We call the $p_j^+$ \emph{indeterminacy points} (alternately \emph{base points}, or \emph{fundamental points}) for $f$ and let $I(f)$ denote the set they comprise.  We call the contracted curves \emph{exceptional} for $f$.  If $f$ is a quadratic transformation, then so is $f^{-1}$, and we have $I(f^{-1}) = \{p_1^-,p_2^-,p_3^-\}$, where each $p_j^-$ is the image of one of the exceptional curves for $f$.  The indices $1,2,3$ assigned to points in $I(f)$ naturally determine an indexing of the points in $I(f^{-1})$  In the situation of the previous paragraph, this is given by declaring $p_j^-$ to be the image of the exceptional line that does not contain $p_j^+$.  In the sequel, however, we must allow our quadratic transformations to be degenerate, so we briefly review the three possibilities for the geometry of a quadratic transformation $f:\cp^2\to\cp^2$.
\begin{itemize}
 \item \emph{Generic case.} The points $p_1^+,p_2^+,p_3^+ \in \cp^2$ are distinct.  They are all blown up (in any order) and the lines joining them are then contracted.
\item \emph{Generic degenerate case.}  We have $p_i^+ = p_j^+\neq p_k^+$ for $\{i,j,k\} = \{1,2,3\}$.  In this case,
there is an exceptional line $E_j^-$, joining $p_i^+$ and $p_k^+$, and another exceptional line $E_k^-$ containing $p_i^+$.  First, $f$ blows up $p_i^+$ and $p_k^+$, creating new rational curves $E_i$ and $E_k^+$.  Then
$f$ blows up the point $E_k^-\cap E_i$ (which lies over $p_j^+$).  Next $f$ contracts $E_j^-$; finally $f$ contracts $E_i$ and $E_k^-$.
\item \emph{Degenerate degenerate case.} We have $p_1^+=p_2^+=p_3^+$.  There is a single exceptional line $E_k^-\subset \cp^2$ containing $p_i^+$.  The transformation blows up $p_i^+$ creating a curve $E_i$, then blows 
up $E_k^-\cap E_i$ creating $E_j$, and finally blows up some point on $E_j$ different from $E_j\cap E_k^-$ creating a curve $E_k^+$; to descend back to $\cp^2$, $f$ contracts $E_k^-$, $E_j$ and $E_i$ in order.
\end{itemize}
In the degenerate cases, we will readily abuse notation by treating e.g. $p_k^+$ as a point in $\cp^2$ and also identifying it with the infinitely near point that is blown up to create $E_k^+$.  In the first sense $I(f)$ contains no more than three points, but in the second sense it always contains exactly three.  The important thing is that in either sense, the points in $I(f^{-1})$ are indexed so that $p_k^-$ is the image of $E_k^-$ after contraction.  We note also that in each of the three cases, the geometry of $f$ and $f^{-1}$ is the same, so that
$p_j^+$ is infinitely near to $p_i^+$ if and only if $p_j^-$ is infinitely near to $p_i^-$, and $\#I(f) = \#I(f^{-1})$ as sets in $\cp^2$. In order to avoid tedious case-by-case exposition in this paper, we will generally give complete arguments only for the generic case where the points $p_j^+$ are distinct, attending to details of the other cases only when they are conceptually different.

Given a curve $C\subset\cp^2$ and a quadratic transformation $f$, we define $f(C) \eqdef \overline{f(C\setminus I(f))}$ to be the proper transform of $C$ by $f$.  When $C\cap I(f) = \emptyset$, we have that $\deg f(C) = 2\deg C$.  In general 
\begin{equation}
\label{deg eqn}
\deg f(C) = 2\deg C - \sum_{p\in I(f)} \nu_p(C),
\end{equation} 
where $\nu_p(C)$ is the multiplicity of $C$ at $p$.  Note that if $p$ is infinitely near, appearing only in some modification $\pi:X\to\cp^2$, then we take $\nu_p(C)$ to be the multiplicity at $p$ of the proper transform of 
$C$ by $\pi^{-1}$.

We will say that $C$ is \emph{fixed} or \emph{invariant} by $f$ if $f(C) = C$. We will further say that $C$ is \emph{properly fixed} by $f$ if additionally all points in $I(f)\cap C$ and $I(f^{-1})\cap C$ are regular for $C$.  In this case, we have that $f$ permutes the singular points of $C$, preserves their type and restricts to a well-defined automorphism of $C$.  Now suppose $C$ is a cubic curve.  As we discussed prior to Proposition 
\ref{fixingauts}, the automorphism $f|_C$ can be described by the multiplier $a\in\C^*$ for the action $(f|_C)^*:\pico(C)\to\pico(C)$, the way it permutes the irreducible components $V\subset C$, and the translations $b_V = [f(0_V) - 0_{f(V)}]\in\pico(C)$ for each of these components.  We note that unlike the situation with projective automorphisms, one can have $\deg f(V)\neq \deg V$ for an irreducible component of $V$.  The starting point for our work is the following detailed description of the quadratic transformations properly fixing a given cubic.

\begin{thm} 
\label{quadauts}
Let $\tau:C\to C$ be an automorphism with multiplier $a$ and translations $b_V$, $V\subset C$.  Given points $p_1^+,p_2^+,p_3^+\in \cp^2$, there exists a quadratic transformation $f:\cp^2\to\cp^2$ properly fixing $C$ with $f|_C = \tau$ if and only if
\begin{enumerate}
 \item 
\label{1st} 
For each irreducible $V\subset C$, we have $\# \{j:p_j^+\in V\cap C_{reg}\} = 2\deg V - \deg \tau(V)$ and $\# \{j:p_j^- \in V\} = 2\deg V - \deg \tau^{-1}(V)$.  In particular $I(f)\subset C_{reg}$.
 \item 
\label{3rd}
$\sum p_j^+ \sim a^{-1}\sum_{V\subset C} (\deg V)\cdot b_V \neq 0$.
\end{enumerate}
The transformation $f$ is unique when it exists and the points of indeterminacy $p_j^- \in I(f^{-1})$ then satisfy the following.
\begin{enumerate}
\setcounter{enumi}{2}
\item
\label{2nd}
Given $j\in\{1,2,3\}$, let $L$ be the line defined by the two points $I(f)\setminus\{p_j^+\}$, and let $V\subset C$ be the irreducible component containing the third point in $C\cap L$.  Then $p_j^- \in \tau(V)$.
\item 
\label{4th}
For each $j\in \{1,2,3\}$, we have $p_j^- - ap_j^+ \sim b_j - \sum b_V\deg V$, where $b_j$ is the translation for $f$ on the component containing $p_j^+$.
\end{enumerate}
\end{thm}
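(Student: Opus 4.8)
The plan is to reduce everything to the interaction between the linear system $|\calL|$ of conics through three points and the group law on $C$ provided by Theorem~\ref{group law}. First I would recall that a quadratic transformation $f$ with $I(f)=\{p_1^+,p_2^+,p_3^+\}$ is, after a choice of linear coordinates on the target, given by a basis of the net of conics through the $p_j^+$; the exceptional lines of $f$ are precisely the three lines $L_j$ joining pairs of the $p_j^+$, and $p_j^-$ is by definition the image of $L_k$-opposite line, i.e. of the line through the two points other than $p_j^+$. So constructing $f$ amounts to constructing the right net of conics together with the right identification of the target $\cp^2$; the constraint $f|_C=\tau$ is what pins both of these down. For the necessity direction I would argue: if such an $f$ exists, then applying the degree formula \eqref{deg eqn} to each irreducible component $V$ and using $f(C)=C$ forces $\#\{j:p_j^+\in V\}=2\deg V-\deg\tau(V)$, which is item~(\ref{1st}); that $I(f)\subset C_{reg}$ is immediate from ``properly fixes.'' Item~(\ref{3rd}) then follows by pulling back a generic line $L$ to the conic $f^{-1}(L)$: this conic meets $C$ in $6$ points, three of which are forced to be the $p_j^+$ (each $p_j^+$ is a base point of the net, hence lies on $f^{-1}(L)$), and the remaining three are $f^{-1}$ of the three points $L\cap C$; feeding this into Theorem~\ref{group law} and translating through $\tau$ yields exactly $\sum p_j^+\sim a^{-1}\sum(\deg V)b_V$, and the non-vanishing is equivalent to $I(f)$ not being a line section, which in turn is forced because a net of conics through three collinear points is not birational.

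For the existence and uniqueness direction, the idea is to build $f$ as a composition. I would first use Proposition~\ref{fixingauts}: construct a linear transformation $T\in\Aut(\cp^2)$ that fixes $C$, realizes the multiplier $a$, and has translations equal to the $b_V$ (feasible because the compatibility condition $\sum(\deg V)b_V\sim0$ is not assumed — but note $\tau$ itself need not be linear, so instead I use $T$ to absorb the ``interesting'' part and reduce to showing that the specific quadratic $q_0:[x:y:z]\mapsto[yz:zx:xy]$ or a conjugate of it does the job in suitable coordinates). More precisely: choose coordinates so that $p_1^+,p_2^+,p_3^+$ become the three coordinate points (this is possible in the generic case; in the degenerate cases one uses the infinitely-near description from \S1.2 and the same argument goes through with bookkeeping). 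Then $q_0$ properly fixes a cubic of the form $C'=q_0(C)$; I would compute the restriction $q_0|_{C'}$ explicitly using the parametrizations $\kappa$ written down after Theorem~\ref{group law}, and observe that composing with an appropriate $T$ from Proposition~\ref{fixingauts} produces a quadratic transformation whose restriction to $C$ has precisely the prescribed multiplier and translations, provided the numerical constraints (\ref{1st}) and (\ref{3rd}) hold. Condition~(\ref{3rd}) is exactly what guarantees the composite is still quadratic (degree does not drop) and that the three required points are a genuine indeterminacy set rather than a line section. Uniqueness follows because any two candidates differ by a linear transformation fixing $C$ and inducing the identity on $\pico(C)$ together with the three fixed points $p_j^+$, hence by Proposition~\ref{fixingauts} such a transformation is the identity.

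For the final two items, which describe $I(f^{-1})$: item~(\ref{2nd}) is essentially bookkeeping once the geometry is set up — the exceptional line opposite $p_j^+$ meets $C$ in the $p_i^+,p_k^+$ ($i,k\neq j$) and in a third point lying on some component $V$; its image under $f$ is the point $p_j^-$, and since $f|_C=\tau$ maps $V$ to $\tau(V)$, we get $p_j^-\in\tau(V)$. Item~(\ref{4th}) is the quantitative refinement: using Theorem~\ref{group law} on the line $L=\overline{p_i^+p_k^+}$ one has $p_i^++p_k^++(\text{third point})\sim0$, hence the third point $\sim -p_i^+-p_k^+\sim p_j^+ - \sum p_\ell^+\sim p_j^+-a^{-1}\sum(\deg V)b_V$ by item~(\ref{3rd}); applying $\tau$ (multiplier $a$, translation $b_{V}$ on the relevant component) gives $p_j^-\sim a p_j^+ - \sum(\deg V)b_V + b_j$, which rearranges to item~(\ref{4th}). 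The main obstacle I anticipate is not any single computation but the degenerate cases: when two or three of the $p_j^+$ coincide, ``$I(f)\subset C_{reg}$'' and the formulas must be interpreted for infinitely near points, the line $L$ in item~(\ref{2nd}) may become a tangent line, and the parametrization argument for $q_0|_{C'}$ needs to be redone at the level of the blown-up surface. As the paper itself signals (``we will generally give complete arguments only for the generic case''), I would treat the generic case in full and then indicate how the infinitely-near dictionary of \S1.2 converts each step — degree formula, group law on a tangent line, Proposition~\ref{fixingauts} — into its degenerate analogue, flagging that the arithmetic identities (\ref{3rd}) and (\ref{4th}) are insensitive to the degeneration because they only involve the divisor classes, which vary continuously.
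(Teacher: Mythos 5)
Your handling of the necessity of the two existence conditions, of the component statement for $p_j^-$, and of uniqueness is essentially the paper's argument (the paper also gets uniqueness from the fact that two quadratic maps with the same base points differ by a linear map, which then fixes $C$ pointwise and must be the identity). The genuine problems are in the existence direction and in your derivation of the translation formula for $p_j^-$.

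For existence, the heart of the theorem is producing $f$ whose restriction to $C$ is \emph{exactly} the prescribed $\tau$, and your sketch omits the mechanism that makes this possible. Your first idea --- use Proposition \ref{fixingauts} to build a linear $T$ fixing $C$ ``with translations equal to the $b_V$'' --- cannot work: that proposition requires $\sum(\deg V)\,b_V\sim 0$, whereas the hypothesis $\sum p_j^+\sim a^{-1}\sum(\deg V)\,b_V\neq 0$ says precisely that this weighted sum is nonzero (equivalently, $\tau$ is not induced by any projective automorphism). Your fallback (normalize the base points to the coordinate points, apply the standard quadratic involution, then ``compose with an appropriate $T$ from Proposition \ref{fixingauts}'') skips the two steps that carry the weight in the paper's proof: first one needs a linear map carrying the image cubic back to $C$, and this map is \emph{not} supplied by Proposition \ref{fixingauts} (which only produces automorphisms fixing $C$) but by the fact that the image is a cubic abstractly isomorphic to $C$, hence projectively equivalent to it; second, having arranged $f(C)=C$ with the right permutation of components, one corrects the multiplier and the translations by automorphisms fixing $C$, and the translation correction is legitimate exactly because the constructed map's restriction already satisfies the same sum condition $\sum p_j^+\sim a^{-1}\sum(\deg V)\tilde b_V$ (by the necessity direction), so the discrepancy obeys $\sum(\deg V)(b_V-\tilde b_V)\sim 0$ and Proposition \ref{fixingauts} applies. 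Your stated role for the sum condition (``guarantees the composite is still quadratic'') is not the point --- composing with linear maps never changes the degree --- and the ``explicit parametrization of $\kappa$'' route is unavailable when $C$ is a smooth cubic. As it stands, the key matching step is asserted rather than proved.

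There is also a gap in your proof of the formula $p_j^--ap_j^+\sim b_j-\sum(\deg V)\,b_V$. You work with the exceptional line $L$ through the other two base points: its third intersection $x$ with $C$ satisfies $x\sim -p_i^+-p_k^+$ and maps to $p_j^-$, so $p_j^-\sim a\,x+b_W$ where $W$ is the component \emph{containing $x$}; you then silently take $b_W=b_j$, the translation on the component containing $p_j^+$. These components coincide when $\tau$ preserves degrees of components, but the theorem expressly allows $\deg\tau(V)\neq\deg V$: if $C$ is a conic $Q$ plus a line $M$ and $\tau$ interchanges them, condition (\ref{1st}) puts all three $p_j^+$ on $Q$ while $x$ lies on $M$, so your computation yields $b_M$ where the theorem asserts $b_Q$; recovering the stated formula would require the additional identity $b_Q\sim b_M$ for such $f$, which is true but needs proof. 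The paper's argument avoids this entirely by using a generic line through $p_j^+$ and the count ``a line meets each component $V$ in $\deg V$ points,'' which keeps track of the translations correctly in every case, including component swaps.
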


\begin{proof}
Suppose first that there exists a quadratic transformation $f$ with the desired properties; i.e. $f$ properly fixes $C$ with $f|_C = \tau$ and $I(f) = \{p_1^+,p_2^+,p_3^+\}$.  Condition \ref{1st} is then a consequence of the degree equation \eqref{deg eqn}.  Condition \ref{2nd} follows from the relationship, described at the beginning of this subsection, between points in $I(f)$ and points in $I(f^{-1})$.  

To see that condition \ref{3rd} holds, note first that since the $p_j^+$ are indeterminate for $f$, they cannot be collinear.  Hence $\sum p_j^+ \not\sim 0$ by Theorem \ref{group law}.  let $L\subset\cp^2$ be a generic line.  Then by Theorem \ref{group law}, we have $p_1+p_2+p_3\sim 0$ where $p_1,p_2,p_3\in C_{reg}$ are the points where $L$ meets $C$.  Now $f^{-1}(L)$ 
is a conic containing the three points $f^{-1}(p_j)$ and (since $L$ meets all exceptional lines for $f^{-1}$ in generic points) the three points in $I(f)$.  Thus $\sum_{j=1}^3 f^{-1}(p_j) + \sum_{j=1}^3 p_j^+ \sim 0$.  
Moreover, since $\tau(p) \sim ap + b_V$ for all $p$ in an irreducible component $V\subset C$, we see that
$
f^{-1}(p_j) = \tau^{-1}(p_j) \sim a^{-1}(p_j - b_j),
$ 
where $b_j$ is the translation for the irreducible component $V\subset C$ containing $\tau^{-1}(p_j)$.  Each such $V$ contains $\deg V$ of the points $p_j$, so we infer
$$
0 \sim \sum p_j^+ + a^{-1}\sum (p_j - b_j) \sim \sum p_j^+ - a^{-1}\sum_{V\subset C} (\deg V) b_V.
$$
as desired.

Condition \ref{4th} follows from the same kind of reasoning.  Taking $j=1$, we let $L$ be a generic line passing through $p_1^+$, and let $p_2,p_3\in C$ be the remaining points on $L\cap C$.  Then we have 
$p_1^+ + p_2 + p_3 \sim 0$.  By \eqref{deg eqn}, the image $f(L)$ is also a line.  Clearly $L$ contains $f(p_j)\sim ap_j + b_j$ for $j=2,3$, where this time $b_j$ is translation for the irreducible component containing $p_j$.  Also, $L$ intersects the exceptional line through $p_2^+$ and $p_3^+$ at a generic point, so $p_1^-\in f(L)$.  Hence $p_1^-+f(p)+f(q)\sim 0$.  We combine this information to get
$$
0 \sim p_1^- + a(p_2+p_3) + b_2 + b_3 \sim p_1^- - ap_1^+ - b_1 + \sum_{V\subset C} b_V,
$$
where the last $\sim$ follows from the fact that $L$ intersects each irreducible component $V\subset C$ in $\deg V$ points.  So condition \ref{4th} holds.  In summary, conditions \ref{1st} through \ref{4th} are necessary for existence of $f$.  

Turning to sufficiency, we suppose rather that the given automorphism $\tau$ and the points $p_j^+$ satisfy conditions \ref{1st} and \ref{3rd}.  The points $p_j^+$ are not collinear by condition \ref{3rd} and Theorem \ref{group law}, so there exists a quadratic transformation $f$ with $I(f) = \{p_1^+,p_2^+,p_3^+\}$.  It follows from the degree equation \eqref{deg eqn} that $f(C)$ is a cubic curve isomorphic to $C$.  Therefore $f(C) = T(C)$ for some $T\in\Aut(\cp^2)$.  Replacing $f$ with $T^{-1}\circ f$, we have that $f$ properly fixes $C$.  Further composing with a planar automorphisms that permutes linear components of $C$, we may assume that $f(V) = \tau(V)$ for each irreducible $V\subset C$.   

Let $\tilde a\in\C^*$ be the multiplier for the induced automorphism $f|_C$.  Multipliers for the curve $C$ form a group, so from Theorem \ref{fixingauts} we obtain $S\in\Aut(\cp^2)$ fixing $C$ component-wise such that $S(p) \sim a\tilde a^{-1}p$ for all $p\in C$.  We replace $f$ with $S\circ f$ to get $\tilde a = a$.  By the first part of the proof the translations $\tilde b_V$ for $f|_C$ satisfy condition \ref{3rd}.  In particular, 
$\sum (b_V - \tilde b_V) = 0$.  Applying Theorem \ref{fixingauts} again, we get $R\in \Aut(\cp^2)$ fixing $C$ component-wise and satisfying $R(p) \sim p + (b_V-\tilde b_V)$ for each irreducible $V\subset C$ and all 
$p\in f(V)$.  Trading $f$ for $R\circ f$, we arrive finally at a quadratic transformation with all the desired properties.

To see that this $f$ is unique, note that if $\tilde f$ is another such transformation, then $f\circ \tilde f^{-1}$ is a planar automorphism that fixes $C$ pointwise.  In particular, $f\circ \tilde f^{-1}$ fixes three distinct points on any generic line in $\cp^2$ and therefore fixes generic lines pointwise.  It follows that $f = \tilde f$.
\end{proof}

Let us close this section with a couple of remarks.  When applying Theorem \ref{quadauts}, one can of course, specify the points in $I(f^{-1})$ rather than those in $f$.  In this case, condition \ref{3rd} in the theorem becomes
$\sum p_j^- \sim -\sum (\deg V)\cdot b_V$, as one can see by summing condition \ref{4th} over $j=1,2,3$ and combining it with the version of condition \ref{3rd} appearing in the theorem.  

If the cubic $C$ is singular, then it is possible to write down algebraic formulas for the quadratic transformations $f$ in Theorem \ref{quadauts} (see \cite{jackson} for some of these).  However, these tend to be quite long, and it seems to us preferable in many instances to take a more algorithmic point of view.  Namely, if $p\in\cp^2$ is a point outside $C$ and not lying on an exceptional curve, then for any $p_j^+\in I(f)$, the line $L$ joining $p$ and $p^+_j$ meets $C_{reg}$ in two more points $x$ and $y$.  Additionally, the exceptional line that maps to $p_j^-$ meets $L$ in a point $q$.  The image $f(L)$ is therefore also a line, and it passes through $f(x)$, $f(y)$, and $f(q) = p_j^-$.  These last three points are determined by $I(f)$ and $f|_C$.  So we can find $f(L)$ explicitly.  Since $f|_L:L\to f(L)$ is a map between copies of $\cp^1$, and we know the images of three distinct points under $f|_L$, we can find an explicit formula for $f|_L$ and in particular for $f(p)$.  

\section{Automorphisms from quadratic transformations}
\label{auts}

In this section, we consider the issue of when and how a quadratic transformation will lift to an automorphism on some blowup of $\cp^2$.  We also consider the linear pullback actions induced by such automorphisms.  Several of the results here are assembled from other places and restated in a form that will be convenient for us.

\subsection{Lifting to automorphisms}
Let us first describe the precise situation and manner in which a quadratic transformation $f$ can be lifted to an automorphism on a rational surface $X$ obtained from $\cp^2$ be a sequence of blowups (see \cite{bk} and \cite{df} for more on this).  Suppose that there exists $n_1\in\N$ and $\sigma_1\in\{1,2,3\}$ such that $f^{n_1-1}(p_1^-)= p_{\sigma_1}^+$.  Relabeling the points $p_j^-$ and changing the index $\sigma_1$ if necessary, we may further assume that 
\begin{itemize}
 \item $n_1$ is minimal, i.e. $f^j(p_1^-)\notin I(f)\cap I(f^{-1})$ for any $0<j<n_1-1$, and $p_1^-\in I(f)$ only if $n_1=1$;
 \item $p_1^-$ is not infinitely near to some other point in $I(f^{-1})$;
 \item $p_{\sigma_1}^+$ is not infinitely near to some other point in $I(f)$.
\end{itemize}
Then by blowing up the points $p_1^-,\dots, f^{n_1-1}(p_1^-)$, we obtain a rational surface $X_1$ to which $f$ lifts as a birational map $f_1:X_1\to X_1$ with only two points (counting multiplicity) $p_2^-,p_3^-\in I(f_1^{-1})$. If then $f_1^{n_2-1}(p_2^-) = p_{\sigma_2}^+$ for some $n_2\in\N$ and $\sigma_2\neq \sigma_1$, then we can repeat this process obtaining a map $f_2:X_2\to X_2$ with only one point $p_3^-\in I(f_2^{-1})$.  If finally $f_2^{n_3-1}(p_3^-) = p_{\sigma_3}^+$, then we blow up along this last orbit segment and arrive at an automorphism $\hat f:X\to X$.  We call the integers $n_1,n_2,n_3\geq 1$ together with the permutation $\sigma\in\Sigma_3$ the \emph{orbit data} associated to $f$, noting that the surface $X$ is completely determined by the orbit data and the points $p_j^-\in I(f^{-1})$.  Conversely, we say that the quadratic transformation $f$ \emph{realizes} the orbit data $n_1,n_2,n_3,\sigma$.
It follows from general theorems of Yomdin and Gromov (see e.g. \cite{can}) that the topological entropy of any automorphism $\hat f:X\to X$ of a rational surface $X$ is $\log\lambda_1$, where $\lambda_1$ is the largest eigenvalue of the induced linear operator $\hat f^*:H^2(X,\R)\to H^2(X,\R)$.  If $\hat f$ is the lift of a quadratic transformation as in the previous paragraph, then it is not difficult to describe $\hat f^*$ explicitly.  Let $H\in H^2(X,\R)$ be the pullback to $X$ of the class of a generic line in $\cp^2$.  Let $E_{i,n}\in H^2(X)$, $0\leq n\leq n_i-1$ be the class of the exceptional divisor\footnote{Note that this will sometimes be reducible if there are infinitely near points blown up in constructing $X$} associated to the blowup of $f^{n}(p_i^-)$.  Then $H$ and the $E_{i,n}$ give a basis for $H^2(X,\R)$ that is orthogonal with respect to intersection and normalized by $H^2 =1$, $E_{i,n}^2 = -1$.  Under $\hat f^*$ we have
\begin{eqnarray*}
H & \mapsto & 2H-E_{1,n_1-1}-E_{2,n_2-1}-E_{3,n_3-1} \\
E_{i,n} & \mapsto &  E_{i,n-1}, \text{ for } 1\leq n\leq n_i-1;
\end{eqnarray*}
and under $\hat f_* = (\hat f^*)^{-1}$ we have
\begin{eqnarray*}
H & \mapsto & 2H-E_{1,0}-E_{2,0}-E_{3,0} \\
E_{i,n-1} & \mapsto & E_{i,n}, \text{ for } 1\leq n\leq n_i-1;
\end{eqnarray*}
Hence we arrive at

\begin{prop}
\label{fstar}
With the notation above, we have $\hat f^* = S\circ Q$, where $Q:H^2(X)\to H^2(X)$ is given by 
$$
Q(H) = 2H - E_{1,0}-E_{2,0}-E_{3,0}, \quad Q(E_{i,0}) = H-\sum_{j\neq i} E_{j,0}, \quad Q(E_{i,n}) = E_{i,n} \text { for } n>0;
$$
and $S$ fixes $H$ and permutes the $E_{i,j}$ according to
$$
E_{\sigma_i,0}\mapsto E_{i,n_i-1}, \quad E_{i,n} \mapsto E_{i,n-1} \text{ for } n<n_i-1.
$$
The characteristic polynomial $P(\lambda)$ for $\hat f^*$ has at most one root outside the unit circle, and if it exists this root is real and positive.  Moreover, every root $\lambda = a$ of $P(\lambda)$ is Galois conjugate over $\Z$ to its reciprocal $a^{-1}$.
\end{prop}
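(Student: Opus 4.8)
The plan is to exploit three features of $M := \hat f^{*}$ separately: it is an integer matrix, it is invertible over $\Z$ (with inverse $\hat f_{*}$), and, being induced by an automorphism of $X$, it preserves the intersection form $q$ on $H^{2}(X,\R)$, which in the basis $H, E_{i,n}$ has the unimodular matrix $J:=\operatorname{diag}(1,-1,\dots,-1)$ and hence signature $(1,N)$ with $N:=n_{1}+n_{2}+n_{3}$. From the identity $M^{t}JM=J$ I would first get the functional equation for $P$: taking determinants gives $\det M=\pm 1$, and $M^{-1}=JM^{t}J$ is conjugate to $M^{t}$, so $M^{-1}$ has the same characteristic polynomial $P$ as $M$; expanding $\det(\lambda\,\id-M^{-1})$ then yields $P(\lambda)=\pm\lambda^{N+1}P(1/\lambda)$. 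In particular $P(0)=\pm\det M=\pm1$, so $0$ is not a root and the roots of $P$ occur in reciprocal pairs $\{a,a^{-1}\}$.

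Next I would extract the statement about roots off the unit circle from the Lorentzian signature. Decompose $H^{2}(X,\R)$ into $q$-orthogonal, $M$-invariant subspaces $W^{>}\oplus W^{=}\oplus W^{<}$ according to whether the relevant eigenvalues have modulus $>1$, $=1$, or $<1$. Since $M^{-k}\to 0$ on $W^{>}$ while growing at most polynomially on $W^{=}$, and $q(M^{-k}v,M^{-k}w)=q(v,w)$ for all $k$, one finds that $q$ vanishes on $W^{>}\times W^{>}$ and on $W^{>}\times W^{=}$; symmetrically it vanishes on $W^{<}\times W^{<}$ and $W^{<}\times W^{=}$. Hence $q$ is nondegenerate on $W^{=}$ and on $W^{>}\oplus W^{<}$, and inside the latter $W^{>}$ and $W^{<}$ are complementary totally isotropic subspaces, so $\dim W^{>}=\dim W^{<}$ and $q|_{W^{>}\oplus W^{<}}$ has signature $(\dim W^{>},\dim W^{>})$. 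Comparing positive indices across $H^{2}(X,\R)=W^{=}\oplus(W^{>}\oplus W^{<})$ gives $\dim W^{>}\le 1$. Thus $M$ has at most one eigenvalue $\lambda_{1}$ of modulus $>1$; it is simple and, $W^{>}$ being real and at most one-dimensional, it is real; its reciprocal is then the unique eigenvalue of modulus $<1$, and every other eigenvalue lies on the unit circle, so $\lambda_{1}=\rho(M)$.

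To see $\lambda_{1}>0$, I would use the explicit action on $H$. Let $\mathcal C^{+}$ be the component of $\{q>0\}$ containing $H$ (which is allowed since $H^{2}=1$). As $q$ is $M$-invariant, $(\hat f^{*}H)^{2}=H^{2}=1>0$, and $H\cdot\hat f^{*}H=2>0$ by the displayed formula for $\hat f^{*}$ together with the stated normalization of the basis, so $\hat f^{*}H\in\mathcal C^{+}$ too; being a linear isomorphism, $\hat f^{*}$ therefore carries $\mathcal C^{+}$ onto $\mathcal C^{+}$ and $\overline{\mathcal C^{+}}$ onto $\overline{\mathcal C^{+}}$. A nonzero real $\lambda_{1}$-eigenvector $v_{1}$ is $q$-isotropic, hence lies on the light cone $\overline{\mathcal C^{+}}\cup\overline{\mathcal C^{-}}$; replacing $v_{1}$ by $-v_{1}$ if necessary, $v_{1}\in\overline{\mathcal C^{+}}$, and then $\lambda_{1}v_{1}=\hat f^{*}v_{1}\in\overline{\mathcal C^{+}}$, which forces $\lambda_{1}>0$ since $\overline{\mathcal C^{+}}\cap\overline{\mathcal C^{-}}=\{0\}$ and $v_{1}\ne 0$. (Alternatively, the last two paragraphs are subsumed by the standard trichotomy for isometries of $O(1,N)$: elliptic and parabolic isometries have all eigenvalues on the unit circle, while a loxodromic one has a single eigenvalue off it, necessarily real, positive, and equal to the spectral radius.)

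Finally I would deduce the Galois conjugacy. Since $P$ is monic with integer coefficients, every root $a$ is an algebraic integer, and since $a^{-1}$ is also a root it too is an algebraic integer, so $a$ is an algebraic unit; its minimal polynomial $m$ over $\Q$ is monic with integer coefficients and $m(0)=\pm1$ (the norm of a unit being $\pm1$). If all conjugates of $a$ lie on the unit circle, Kronecker's theorem makes $a$ a root of unity, and then $a^{-1}=\bar a$ is a primitive root of the same order, hence Galois conjugate to $a$. Otherwise $m$ has a root off the unit circle, which by the bound just proved is $\lambda_{1}$ or $\lambda_{1}^{-1}$; say $\lambda_{1}$, so $m$ is its minimal polynomial. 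Were $\lambda_{1}^{-1}$ not a conjugate of $\lambda_{1}$, all conjugates of $\lambda_{1}$ other than $\lambda_{1}$ itself would lie on the unit circle, giving $\bigl|N_{\Q(\lambda_{1})/\Q}(\lambda_{1})\bigr|=\lambda_{1}>1$ and contradicting that $\lambda_{1}$ is a unit; hence $\lambda_{1}^{-1}$ is a conjugate of $\lambda_{1}$, $m$ equals its own monic reciprocal, and so the conjugates of $a$ (the roots of $m$) are closed under inversion, i.e. $a$ and $a^{-1}$ are Galois conjugate. I expect the positivity of $\lambda_{1}$ to be the most delicate point, since it is not formal from the lattice-isometry structure and requires pinning down the $\hat f^{*}$-invariant component of $\{q>0\}$ via the action on $H$; I would also emphasize that the Galois statement genuinely relies on the ``at most one root outside the unit circle'' bound --- without it an irreducible factor of $P$ like $\lambda^{3}-\lambda-1$, whose root is a unit but is not conjugate to its reciprocal, would not be obviously excluded, and it is precisely the signature estimate that rules such factors out.
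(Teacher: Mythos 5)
Your proposal is correct and takes essentially the same route as the paper: the reciprocal symmetry of $P$ comes from $\hat f^*$ being an isometry of the unimodular intersection form (the paper phrases this as $\hat f^*$ and $\hat f_*=(\hat f^*)^{-1}$ being adjoint), and the Galois statement comes from the integrality of the norm/constant term of the minimal polynomial, exactly as in the paper's proof. The only real difference is that you supply a self-contained proof of the signature-$(1,N)$ spectral facts (at most one eigenvalue off the unit circle, necessarily real and positive, via the isotropy of the expanding subspace and the invariant positive cone through $H$), which the paper simply cites as well known; you also leave the opening claim $\hat f^*=S\circ Q$ untouched, but the paper likewise treats it as immediate from the displayed formulas.
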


\begin{proof}
The decomposition $\hat f^* = S\circ Q$ follows from the discussion above.  The assertion about roots outside the unit circle is well-known (see \cite{can}) and follows from the fact that the intersection form on $H^2(X,\R)$ has exactly one positive eigenvalue.  Now if $\lambda = e^{i\theta}$ is a root of $P(\lambda)$ on the unit circle, then $e^{i\theta}$ is Galois conjugate to $\overline{e^{i\theta}} = (e^{i\theta})^{-1}$ because $\hat f^*$ preserves integral cohomology classes.  And if $\lambda = a > 1$ is a root of $P(\lambda)$, then so is $a^{-1}$, because $\hat f^*$ and $\hat f_* = (\hat f^*)^{-1}$ are adjoint with respect to intersection, and therefore have the same characteristic polynomials.  Since the product of the roots of the minimal polynomial for $a^{-1}$ must be an integer, it follows that $a$ and $a^{-1}$ are Galois conjugate over $\Z$.
\end{proof}

Proposition \ref{fstar} implies that the action $\hat f^*$ (as well as the hyperbolic space $H^2(X,\R)$) depends only on the orbit data associated to $f$.  In fact, given any orbit data $n_1,n_2,n_3,\sigma$, whether or not it is realized by some quadratic transformation $f$, one can consider the (abstract) isometry
$$
\hat f^*:V \mapsto V
$$
of the hyperbolic z space $V=\R H\bigoplus_{ij} \R E_{ij}$ defined by the equations preceding Proposition \ref{fstar}, and the characteristic polynomial of this isometry will still satisfy the conclusions of the proposition.

We observe in passing that if $\sigma$ is the identity permutation, then the permutation $S$ in the theorem decomposes into three cycles
$$
S=(E_{1,n_1-1}\dots E_{1,0})(E_{1,n_2-1}\dots E_{2,0})(E_{3,n_3-1}\dots E_{3,0});
$$
if $\sigma$ is an involution, swapping e.g. $1$ and $2$, then $S$ decomposes into two cycles
$$
S = (E_{1,n_1-1}\dots E_{1,0}E_{2,n_2-1}\dots E_{2,0})(E_{3,n_3-1}\dots E_{3,0});
$$
and if $\sigma = (123)$ is cyclic, then $S$ is cyclic
$$
S= (E_{1,n_i-1}\dots E_{1,0}E_{2,n_1-1}\dots E_{2,0}E_{3,n_2-1}\dots E_{3,0}).
$$

Bedford and Kim \cite{bk} have computed $P(\lambda)$ explicitly for any orbit data $n_1,n_2,n_3,\sigma$, and their formula will be useful to us below (see the fortuitous coincidence in the proof of Theorem \ref{tentative}).  Specifically, they show that $P(\lambda) =\lambda^{1+\sum n_j}p(1/\lambda)+(-1)^{\operatorname{ord}\sigma}p(\lambda)$, where 
\begin{equation}
\label{charpoly}
p(\lambda) = 1 - 2\lambda + \sum_{j=\sigma_j} \lambda^{1+n_j} + \sum_{j\neq \sigma_j} \lambda^{n_j}(1-\lambda).
\end{equation}

\subsection{Some general observations}
The following fact is folklore among people working in complex dynamics.  We include the proof for the reader's convenience.

\begin{prop}
\label{toofew}
Let $X$ be a rational surface obtained by blowing up $n\leq 9$ points in $\cp^2$ and $f:X\to X$ be an automorphism.
Then the topological entropy of $f$ vanishes.  If $n\leq 8$, then $f^k$ descends to a linear map of $\cp^2$ for some $k\in\N$.
\end{prop}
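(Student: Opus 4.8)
The plan is to exploit the structure of $H^2(X,\R)$ and the induced operator $f^*$, together with the fact that $f^*$ is an isometry of the intersection form that preserves the canonical class $K_X$ and the integral cohomology lattice. When $X$ is obtained by blowing up $n$ points, $H^2(X,\Z)$ has signature $(1,n)$, and $K_X^2 = 9-n$. The entropy of $f$ is $\log\lambda_1$ where $\lambda_1$ is the spectral radius of $f^*$, so it suffices to show $\lambda_1 = 1$ when $n\le 9$.

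First I would reduce to the action on the orthogonal complement $K_X^\perp$ of the canonical class. Since $f$ is an automorphism, $f^*K_X = K_X$, so $f^*$ preserves $K_X^\perp$, which carries the restricted intersection form. When $n\le 8$, we have $K_X^2 = 9-n > 0$, so $\R K_X$ is a positive line; since the whole form has exactly one positive eigenvalue (Hodge index), the restriction to $K_X^\perp$ is \emph{negative definite}. An isometry of a negative definite lattice over $\Z$ lies in a finite group (the automorphism group of a definite lattice is finite), so some power $(f^*)^k$ acts as the identity on $K_X^\perp$, hence on all of $H^2(X,\R) = \R K_X \oplus K_X^\perp$. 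This already gives $\lambda_1 = 1$, so the entropy vanishes for $n\le 8$, and in fact for $n\le 9$ once we handle the boundary case: when $n=9$, $K_X^2 = 0$, so $K_X\in K_X^\perp$ is isotropic and the form on $K_X^\perp$ is negative \emph{semi}definite with one-dimensional radical $\R K_X$; the induced form on $K_X^\perp/\R K_X$ is then negative definite, and the same finiteness argument shows $(f^*)^k$ is unipotent, so again $\lambda_1 = 1$ and entropy vanishes. That disposes of the entropy statement in all cases $n\le 9$.

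Next, for $n\le 8$, I would upgrade "$(f^*)^k = \id$ on cohomology" to "$f^k$ descends to a linear map of $\cp^2$." Replacing $f$ by $f^k$, we may assume $f^*$ is the identity on $H^2(X,\Z)$. In particular $f$ fixes the class of each exceptional curve $E_i$ over the blown-up points, and since these are the only irreducible curves in their classes (each $E_i$ is a $(-1)$-curve with $E_i^2 = -1 < 0$, so rigid), $f(E_i) = E_i$ for each $i$. Therefore $f$ descends through the blowdown morphism $\pi: X\to\cp^2$ to a birational self-map $\bar f$ of $\cp^2$ that is regular away from the images of the $E_i$; but $\bar f$ also fixes the class $H = \pi^*(\text{line})$, so $\bar f$ has degree $1$, i.e. $\bar f\in\Aut(\cp^2)$ is linear. (One must check that the $E_i$ can be contracted in an order compatible with $f$ — since $f$ permutes, hence fixes, each exceptional curve and its infinitely-near tower, this is automatic.)

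The main obstacle I anticipate is the bookkeeping in the second paragraph when some blown-up points are infinitely near: then the "exceptional divisor" over a point is reducible, the classes $E_{i,n}$ form a chain, and one must argue that $f^*$ fixing all these classes forces $f$ to preserve each component curve and hence to commute with the blowdown. This is where care is needed, but it is exactly the kind of rigidity (negative self-intersection curves are unique in their class, and a composition of blowdowns of such curves is canonically determined) that makes the descent work; I would handle it by inducting on the number of blowups, at each stage contracting a $(-1)$-curve that $f$ fixes. A secondary subtlety is making the $n=9$ semidefinite argument rigorous: one needs that an integral isometry of a lattice of the form (negative definite) $\oplus$ (rank-1 radical) is quasi-unipotent with all eigenvalues roots of unity, which follows because it induces a finite-order map on the definite quotient and is unipotent along the radical direction.
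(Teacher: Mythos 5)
Your argument is correct, but for the entropy statement it takes a different route from the paper, and it is worth comparing the two. The paper does not estimate the spectral radius directly: it invokes the result (cited from Cantat) that a positive entropy automorphism admits a nontrivial class $\theta\in H^2(X,\R)$ with $f^*\theta=\lambda\theta$ and $\theta^2=0$, notes $\pair{\theta}{K_X}=0$ by invariance of $K_X$, and then uses the Hodge-index/light-cone argument together with $K_X^2=9-n\geq 0$ to force $\theta$ to be proportional to $K_X$, contradicting $f^*\theta=\lambda\theta$ with $\lambda>1$; this handles all $n\leq 9$ in one stroke, with no case split at $n=9$. You instead bound the spectral radius of $f^*$ lattice-theoretically: for $n\leq 8$ the sublattice $K_X^\perp$ is negative definite, so $f^*$ has finite order there (this is exactly the paper's argument for its \emph{second} assertion, which you are reusing for the entropy claim as well), and for $n=9$ you pass to the negative definite quotient $K_X^\perp/\R K_X$ and conclude quasi-unipotence, then apply Gromov--Yomdin to get zero entropy. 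Both are sound; your version trades the citation of the isotropic eigenclass for the citation of Gromov--Yomdin plus some bookkeeping in the semidefinite $n=9$ case (which you correctly flag and which does go through: the induced isometry of the rank-$8$ definite quotient has finite order, the action on $\R K_X$ and on $H^2/K_X^\perp$ is trivial, so a power of $f^*$ is unipotent). For the descent statement when $n\leq 8$ your argument coincides with the paper's --- $f^{k*}=\id$ forces $f^k$ to preserve each exceptional curve, hence to descend to a degree-one, i.e.\ linear, map of $\cp^2$ --- except that you supply the rigidity details (uniqueness of irreducible curves of negative self-intersection in their classes, equivariant contraction in the infinitely near case) that the paper leaves implicit. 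One tiny point in your favor: the signature of $H^2(X,\Z)$ is $(1,n)$ as you say, not $(1,n-1)$ as written in the paper's proof.
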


\begin{proof}
Suppose that $f$ has positive entropy $\log\lambda>0$.  Then there exists \cite{can} a non-trivial real cohomology class $\theta\in H^2(X,\R)$ with $f^*\theta = \lambda\theta$ and $\theta^2 = 0$.  Moreover, $f_*K_X = f^*K_X = K_X$, where $K_X$ is the class of a canonical divisor on $X$.  Intersecting $K_X$ and $\theta$, we see that
$$
\pair{\theta}{K_X} = \pair{f^*\theta}{f^*K_X} = \pair{\lambda \theta}{K_X}.
$$
Hence $\pair{\theta}{K_X} = 0$.  Since the intersection form on $X$ has signature $(1,n-1)$, and $K_X^2\geq 0$ for $n\leq 9$ we infer that $\theta = cK_X$ for some $c<0$.  But then $f^*\theta = \theta \neq \lambda\theta$.
This contradiction shows that $f$ has zero entropy.

If $n\leq 8$, then in fact $K_X^2 > 0$.  Thus the intersection form is strictly negative on the orthogonal complement $H\subset H^2(X,\R)$ of $K_X$.  Since $H$ is finite dimensional and invariant under $f^*$, and $f^*$ preserves $H^2(X,\Z)$, it follows that $f^*$ has finite order on $H$.  Hence $f^{k*} = \id$ for
some $k\in\N$.  In particular, $f^k$ preserves each of the exceptional divisors in $X$ that correspond to the $n\leq 8$ points blown up in $\cp^2$.  It follows that $f^k$ descends to a well-defined automorphism of $\cp^2$.
\end{proof}

\begin{cor}
\label{minus1}
Suppose that $f:\cp^2\to\cp^2$ is a quadratic transformation that properly fixes a cubic curve $C\subset\cp^2$ and lifts to an automorphism $\hat f$ of some modification $X\to \cp^2$.  If the multiplier of $f|_C$ is $-1$ and $f$ fixes each irreducible component of $C$, then $f:\cp^2\to\cp^2$ is linear.  Similarly, if $f$ fixes each irreducible component of $C$ and the multiplier of $f|_C$ is a primitive cube root of unity, then the topological entropy of $\hat f$ vanishes.
\end{cor}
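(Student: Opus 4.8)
The plan is to reduce both assertions to the counting information encoded in Theorem \ref{quadauts}, and then to invoke Proposition \ref{toofew}. First I would observe that since $f$ properly fixes $C$ and fixes each irreducible component $V\subset C$, the degree relation in item \ref{1st} of Theorem \ref{quadauts} forces $\deg \tau(V)=\deg V$ for every $V$; hence $\#\{j:p_j^+\in V\cap C_{reg}\}=\deg V$, and similarly for the $p_j^-$. In other words, $I(f)$ meets $C$ exactly as the intersection of $C$ with a line does: each component $V$ receives $\deg V$ of the three points. The same is true of $I(f^{-1})$.

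Next I would iterate. Because $f$ fixes each component and the multiplier $a$ satisfies $a=-1$ (resp. $a$ a primitive cube root of unity), the induced automorphism of $\pico(C)\cong\C/\Gamma$ is $t\mapsto a^{-1}t$, which has order $2$ (resp. order $3$); and since $f$ maps each component to itself, $f|_C$ itself has finite order, $2$ in the first case and dividing $6$ in the second (the translations $b_V$ contribute a finite-order piece once one composes enough times, as $f|_C$ lies in the finite extension of the torsion-translation group by the cyclic multiplier group). The key point I want to extract is that the orbit segments $p_j^-,f(p_j^-),f^2(p_j^-),\dots$ all remain on $C_{reg}$, and in fact on the same component as $p_j^-$, so the orbit data $n_j$ that can be realized is constrained: the condition $f^{n_j-1}(p_j^-)=p_{\sigma_j}^+$ with $\sigma$ necessarily preserving the partition of $\{1,2,3\}$ by components, combined with $f|_C$ having order $m\le 6$, bounds each $n_j$ — roughly, one only ever blows up finitely many points, so $X$ is obtained by blowing up a bounded number $N$ of points.

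The main obstacle is getting an honest bound $N\le 9$ on the number of blown-up points (or, in the $a=-1$ case, $N\le 8$), since that is exactly what lets Proposition \ref{toofew} finish the argument: for $a=-1$ we would get $\hat f^k$ descending to a linear map of $\cp^2$, and since $f$ itself is the quadratic transformation downstairs, $f^k$ linear forces $f$ linear (a non-linear quadratic transformation has all positive powers non-linear, as the degree sequence $2,4,\dots$ unless there is cancellation, and cancellation is what happens precisely when it realizes orbit data — here it cannot, else $X$ would have $>8$ points); for $a$ a cube root of unity we would land in the $n=9$ case of Proposition \ref{toofew}, giving zero entropy directly. Concretely, I would argue that since $f|_C$ has order $m$, any orbit $p_j^-,\dots,f^{n_j-1}(p_j^-)$ either closes up within $m$ steps inside $C_{reg}$ before reaching $I(f)$ — impossible, since the points of $I(f)$ lie on $C$ and a returning orbit would have to pass through one of them — or terminates at $p_{\sigma_j}^+$ after at most $m$ steps; thus $n_j\le m\le 6$, so $N=\sum n_j\le 18$. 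This is too weak, so I would sharpen: the three orbits together, after collapsing by the order-$m$ symmetry, occupy at most $3m$ marked points on $C$, but the constraint $\sum p_j^+\sim a^{-1}\sum(\deg V)b_V$ from item \ref{3rd} together with the corresponding constraint on the $p_j^-$ and the relations in item \ref{4th} forces enough coincidences among the orbit points that $N\le 9$ (resp. $\le 8$). The delicate bookkeeping here — showing the arithmetic in $\C/\Gamma$ with a multiplier of order $2$ or $3$ cannot produce orbit data realizable on a surface with more than $9$ (resp. $8$) points — is where the real work lies; everything after that is a citation to Proposition \ref{toofew}.
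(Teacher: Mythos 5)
Your overall strategy (bound the number of points blown up and cite Proposition \ref{toofew}) is the same as the paper's, but the central quantitative step is exactly the part you leave as ``delicate bookkeeping,'' and your proposed route to it is not the right mechanism. The point you miss is elementary: since $f$ fixes each component $V$ and acts on it by $p\mapsto ap+b_V$, the accumulated translation after $m$ iterates is $(1+a+\cdots+a^{m-1})b_V$. For $a=-1$ this vanishes at $m=2$, so $f^2|_C=\id$ \emph{exactly}; for $a$ a primitive cube root of unity it vanishes at $m=3$, so $f^3|_C=\id$. Since every point of $I(f^{-1})$ lies on $C_{reg}$ and its orbit stays on $C$, minimality of the $n_j$ forces $n_j\le 2$ (resp.\ $n_j\le 3$), hence $X$ is obtained by blowing up at most $6$ (resp.\ $9$) points, and Proposition \ref{toofew} applies immediately. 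No ``sharpening'' via items \ref{3rd} and \ref{4th} of Theorem \ref{quadauts}, and no forced coincidences among orbit points, are needed; in your version the bound $N\le 9$ (resp.\ $\le 8$) is simply asserted, so the proof is incomplete.

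Two subsidiary errors compound this. First, your justification that $f|_C$ has finite order --- that ``the translations contribute a finite-order piece'' because $f|_C$ lies in an extension of the \emph{torsion}-translation group --- is false in general: translations in $\pico(C)$ need not be torsion (for a cuspidal cubic $\pico(C)\cong\C$ is torsion-free). The finiteness comes only from the identity $1+a=0$, resp.\ $1+a+a^2=0$, and in the cube-root case the order divides $3$, not merely $6$; using $6$ gives $n_j\le 6$ and $N\le 18$, which is why you found yourself stuck. Second, your claim that ``a non-linear quadratic transformation has all positive powers non-linear'' is false (the standard involution $[x:y:z]\mapsto[yz:zx:xy]$ squares to the identity), so the parenthetical argument you give for deducing that $f$ itself is linear from linearity of a power does not stand as written.
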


\begin{proof}
Suppose $f$ realizes orbit data $n_1,n_2,n_3\geq 1$ , $\sigma\in\Sigma_3$.  If the multiplier of $f$ is $-1$ and $f^2(V) = V$ for each irreducible $V\subset C$, then it follows that $f^2|_C = \id$.  Hence $n_j = 1$ or $2$ for each $j$, and the surface $X$ may be created by blowing up at most six points in $\cp^2$.  The first assertion follows from Proposition \ref{toofew}.  If the multiplier of $f$ is a primitive cube root of unity, then $f^3$ fixes $C$ component-wise, and the same argument shows that $X$ may be constructed by blowing up at most $9$ points in $\cp^2$.  The second assertion likewise follows.
\end{proof}

\begin{thm}
\label{plus1}
Let $f:\cp^2\to\cp^2$ be a quadratic transformation properly fixing a cubic curve $C\subset\cp^2$.  Suppose that $f$ permutes the irreducible components of $C$ transitively and that $f|_C$ has multiplier $1$. Let $X$ be the rational surface obtained by blowing up all points (with multiplicity) in $I(f)$, $I(f^{-1})$ and $f(I(f^{-1}))$. Then $f$ lifts to an automorphism $\hat f:X\to X$ with an invariant elliptic fibration.
\end{thm}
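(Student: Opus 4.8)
The plan is to show that the multiplier-$1$ hypothesis forces the orbit segments of $f$ to close up after three steps, so that $\hat f$ exists on the surface $X$ described in the statement, and then to identify the invariant elliptic fibration with the anticanonical pencil of $X$. First, since $f$ properly fixes $C$ we have $I(f),I(f^{-1})\subset C_{reg}$, and since $f|_{C_{reg}}$ has multiplier $a=1$ the relation of Theorem \ref{quadauts}(\ref{4th}) --- which for irreducible $C$ is simply $p_j^-=p_j^+-2b$ as in Theorem \ref{mainthm1} --- can be pushed forward: applying $f|_{C_{reg}}$, which on each component is a translation and which permutes the components of $C$ compatibly with the translations $b_V$, one computes $f(p_j^-)$ and then $f^2(p_j^-)$ and finds $f^2(p_j^-)=p_{\sigma_j}^+\in I(f)$, where $\sigma\in\Sigma_3$ is the permutation induced by $f$ on the components of $C$. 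Hence $f$ realizes the orbit data $n_1=n_2=n_3=3$ with this $\sigma$, and by the discussion in \S\ref{auts} it lifts to an automorphism $\hat f\colon X\to X$ of the surface obtained by blowing up the three length-three orbit segments $p_j^-,\,f(p_j^-),\,f^2(p_j^-)=p_{\sigma_j}^+$. Since $\{f^2(p_j^-)\}=I(f)$, this surface is precisely the $X$ in the statement. (When an orbit segment happens to be shorter, or to contain infinitely near points, the same computation is carried out on the appropriate intermediate blowup; I describe only the generic case below.)

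The heart of the argument is that the nine blown-up points lie on a pencil of cubics. They all lie in $C_{reg}$, and each irreducible $V\subset C$ contains $3\deg V$ of them --- automatic when $C$ is irreducible, and in the reducible case a consequence of condition (\ref{1st}) of Theorem \ref{quadauts} applied to both $f$ and $f^{-1}$ together with the fact that $f$ cycles the components. Moreover, combining the remark following Theorem \ref{quadauts} (that $\sum_j p_j^-\sim-\sum_V(\deg V)b_V$) with condition (\ref{3rd}) for $a=1$ (that $\sum_j p_j^+\sim\sum_V(\deg V)b_V$) and keeping careful track of how $f$ translates along the components, a short calculation gives
$$\sum_{j=1}^{3}\sum_{k=0}^{2}\kappa\bigl(f^{k}(p_j^-)\bigr)\ \sim\ 0.$$
By Theorem \ref{group law} (applied with $d=3$) there is therefore a cubic $C'$ whose intersection with $C$ is, as a scheme, exactly the set of nine centers. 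Generically these are nine distinct points, so $C'\neq C$, and $C$ together with $C'$ spans a pencil $\mathcal P$ of plane cubics whose base locus is exactly the nine centers.

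Since $X$ is obtained by blowing up precisely the base locus of $\mathcal P$, the pencil pulls back to a base-point-free pencil on $X$ --- which is the whole anticanonical system $|-K_X|$, this being the familiar construction of a rational elliptic surface --- and this pencil defines a morphism $\phi\colon X\to\cp^1$ whose fibers are the proper transforms of the members of $\mathcal P$, curves of arithmetic genus $1$ with smooth generic member. Thus $\phi$ is an elliptic fibration. Finally, $\hat f^*$ fixes the canonical class $K_X$, so $\hat f$ carries anticanonical curves to anticanonical curves, hence permutes the fibers of $\phi$; this produces $\bar f\in\Aut(\cp^1)$ with $\phi\circ\hat f=\bar f\circ\phi$, i.e. $\phi$ is $\hat f$-invariant.

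I expect the main obstacle to lie in two places. The first is the bookkeeping in the middle step: tracking the translations $b_V$ and the component permutation carefully enough, in all the cases permitted by transitivity (a single irreducible cubic, three lines, or a conic plus a line), to see that the displayed sum really does vanish. The second, more technical point is justifying that blowing up the orbit segments --- with their infinitely near structure in the degenerate configurations --- resolves \emph{all} base points of $\mathcal P$, including any arising from tangency between $C$ and $C'$; this amounts to matching the scheme structure of $C\cap C'$ provided by Theorem \ref{group law} against the infinitely near structure built into $X$, so that the pulled-back pencil is genuinely base-point-free.
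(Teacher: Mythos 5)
Your proposal is essentially the paper's own argument: use condition \ref{4th} of Theorem \ref{quadauts} to see that the orbit segments close after three steps, check that the nine centers lie in $C_{reg}$ with $3\deg V$ of them on each component and sum to zero, apply Theorem \ref{group law} with $d=3$ to produce a second cubic and hence a pencil based exactly at the nine centers, and lift the pencil to $X$. Three comments on the differences. First, the ``short calculation'' you defer is precisely what the paper's opening move renders trivial: using Proposition \ref{fixingauts} one conjugates $f$ so that all translations $b_V$ equal a single $b$; then $p_j^-\sim p_j^+-2b$, the middle points are $\sim p_j^+-b$, and the nine-point sum telescopes to $-3b+0+3b\sim 0$. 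The same normalization yields $3b\not\sim 0$ (so $f(p_j^-)\neq p_j^+$) and cleanly isolates the degenerate possibilities ($2b\sim 0$, giving the trivial case $p_j^-=p_j^+$, or coincidences $p_j^-=p_k^+$, $f(p_j^-)=p_k^+$ requiring iterated blowups), which you only flag in passing. Second, your index bookkeeping is off: carrying out the translation computation in any of the transitive configurations gives $f^2(p_j^-)=p_j^+$ with the \emph{same} index $j$, so the realized orbit data has $\sigma=\id$, not the permutation $f$ induces on the components of $C$; this is harmless for the present theorem (only the set identity $f^2(I(f^{-1}))=I(f)$ is used), but the correct index matching is what the paper exploits later (e.g.\ in Theorem \ref{3linenogos}), so the claim as written should be fixed. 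Third, your invariance argument differs mildly from the paper's: the paper checks directly, via the degree formula \eqref{deg eqn}, that $f$ maps each member of the pencil to a cubic through all nine base points, whereas your route through $\hat f^*K_X=K_X$ needs the extra (Cayley--Bacharach type) fact that the cubics through the nine base points form exactly the pencil, i.e.\ $\dim|-K_X|=1$; without that, preservation of the anticanonical class alone would not single out the fibration. None of these is a fatal gap, but the normalization step is the piece of the paper's proof your sketch is genuinely missing.
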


Of course, the topological entropy must vanish for the map in this theorem.  A more detailed analysis shows that either $f^2 = \id$, or $\norm{\hat f^{n*}}$ grows quadratically with $n$ and the invariant elliptic fibration is unique (see \cite{ps, can, mcm}) for more about this phenomenon.

\begin{proof} 
We claim that after conjugation by a planar automorphism, we may assume that the translations $b_V$ for $f$ on the irreducible components $V\subset C$ are independent of $V$.  To see this, suppose that $C$ has three irreducible components permuted $V_1 \to V_2 \to V_3 \to V_1$ by $f$.  Let $b_1$ be the corresponding translations.  Then choose $\tilde b_j\in\pico(C)$ so that $3\tilde b_1 = b_3-b_1$, $3\tilde b_2 = b_1 -b_2$, $3\tilde b_3 = b_2 - b_3$.  Depending on whether $\pico(C)\cong \C$ or $\pico(C)\cong\C^*$, these $\tilde b_j$ might or might not be unique, but in either case, they can be chosen so that $\sum \tilde b_j = 0$.  In this case, Proposition \ref{fixingauts} gives us $T\in\Aut(\cp^2)$ fixing $C$ component-wise with multiplier $1$ and translations $\tilde b_j$.  One checks directly that $T\circ f \circ T^{-1}$ has multiplier $1$ and translation $b=b_V$ satisfying
$3b = \sum b_j$ independent of $V\subset C$.  The case when $C$ has two irreducible components can be verified similarly.

From Theorem \ref{quadauts}, we obtain that $p_j^- \sim p_j^+ - 2b$ for each $p_j^-$ in $I(f)$.  Hence $f^2(p_j^-) \sim p_j^+$.  In fact, if $V\subset C$ is the component containing $p_j^+$, then it follows that $p_j^- \in f(V)$ when $C$ has three irreducible components and $p_j^- \in V$ when $C$ has two components.  In any case, we find that $f^2(p_j^-) \in V$, so that $f^2(p_j^-) = p_j^+$.  Since $3b\not\sim 0$, it follows that 
$f(p_j^-)\neq p_j^+$.  If $p_j^- = p_j^+$ for some $j$, then in fact $2b \sim 0$ and $p_j^- = p_j^+$ for all $j$.  Hence $f$ is conjugate to the `standard' quadratic transformation $q$, and the theorem is trivial.  Henceforth,
we assume $p_j^-\neq p_j^+$.

Suppose further for the moment that there are no pairs of indices $j\neq k$ such that $p_j^- = p_k^+$ or $f(p_j^-) = p_k^+$.  Then we may blow up the points $p_j^-, f(p_j^-), p_j^+$ for each $j$ to obtain a rational surface $X$ to which $f$ lifts as an automorphism.  Furthermore, $\sum p_j^- + \sum (p_j^- + b) + \sum p_j^+ \sim -3b + 0 + 3b = 0$.  Finally, one finds by comparing degrees that regardless of the number of components $V\subset C$, each $V$ contains precisely $3\deg V$ of the points blown up.  Hence there is a pencil of cubic curves that contains $C$ and whose basepoints are precisely the ones blown up.  Each curve $C'$ in the pencil intersects each exceptional curve for $f$ precisely once and contains each point in $I(f)$ with multiplicity one.  Comparing degrees, we see that $f(C')$ is another cubic curve containing all the basepoints. We conclude that the pencil lifts to an invariant elliptic fibration of $X$.

Now if it happens that $p_j^- = p_k^+$ or $f(p_j^-) = p_k^+$ for one or more pairs of indices $j\neq k$, then we can reach the same conclusion as before, except that constructing $X$ will require iterated blowing up, the precise nature of which depends on which special case we are in.  The important thing is that since $2b, 3b\not\sim 0$, one always has to blow up nine evenly distributed points in $C_{reg}$ that sum to zero in $\pico(X)$.
\end{proof}

\begin{prop}
\label{olength1}
Let $P$ be the characteristic polynomial for orbit data $n_1,n_2,n_3,\sigma$.  If $n_j =1$ for some $j=\sigma(j)$ that is fixed by $\sigma$, then all roots of $P$ lie on the unit circle\footnote{Since $P$ is monic with integer coefficients, a theorem of Kronecker tells us that all roots are roots of unity.}. 
\end{prop}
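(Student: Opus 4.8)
The plan is to exploit the explicit formula \eqref{charpoly} of Bedford and Kim for $P(\lambda)$ under the stated hypothesis $n_j = 1$ with $\sigma(j) = j$, and show that $P$ factors with a cyclotomic factor splitting off so that what remains is either trivial or visibly has all roots on the unit circle (i.e.\ is a product of Salem-type factors that in fact degenerate). Concretely, suppose without loss of generality that $j = 3$, so $n_3 = 1$ and $\sigma_3 = 3$. Substituting $n_3 = 1$ into \eqref{charpoly}, the polynomial $p(\lambda)$ becomes
\[
p(\lambda) = 1 - 2\lambda + \lambda^2 + \Big(\sum_{\substack{i\in\{1,2\}\\ i = \sigma_i}} \lambda^{1+n_i}\Big) + \Big(\sum_{\substack{i\in\{1,2\}\\ i \neq \sigma_i}} \lambda^{n_i}(1-\lambda)\Big),
\]
and since $1 - 2\lambda + \lambda^2 = (1-\lambda)^2$, every term of $p(\lambda)$ is now divisible by $(1-\lambda)$. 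Thus $(1-\lambda) \mid p(\lambda)$, and I would write $p(\lambda) = (1-\lambda) q(\lambda)$ for an explicit lower-degree polynomial $q$. First I would carry out this factorization carefully, keeping track of the two sub-cases according to whether $\sigma$ fixes the remaining indices $1,2$ or swaps them.

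Next, I would feed this back into the formula $P(\lambda) = \lambda^{1 + \sum n_j} p(1/\lambda) + (-1)^{\operatorname{ord}\sigma} p(\lambda)$. Using $n_3 = 1$ we have $1 + \sum n_j = 2 + n_1 + n_2$; writing $p(\lambda) = (1-\lambda)q(\lambda)$ gives $\lambda^{2+n_1+n_2} p(1/\lambda) = \lambda^{2+n_1+n_2}(1 - 1/\lambda)q(1/\lambda) = \lambda^{1+n_1+n_2}(\lambda - 1)q(1/\lambda)$. So
\[
P(\lambda) = (\lambda - 1)\Big[ \lambda^{1+n_1+n_2} q(1/\lambda) - (-1)^{\operatorname{ord}\sigma} q(\lambda) \Big] \cdot (-1)\cdot\big(\text{sign bookkeeping}\big),
\]
i.e.\ $(\lambda - 1)$ divides $P(\lambda)$ and the cofactor $\tilde P(\lambda)$ has the same palindromic/anti-palindromic structure as $P$ itself but with degree lowered by essentially two (after clearing the common $(1-\lambda)$ from both halves). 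The key point I want to extract is that $\tilde P$ is, up to sign and a power of $\lambda$, equal to the characteristic polynomial of the isometry $\hat f^*$ attached to the \emph{reduced} orbit data obtained by deleting the index $3$ — that is, the orbit data $n_1, n_2$ with the induced permutation on $\{1,2\}$, living on a lattice of one smaller rank. This is the natural ``blow-down'' statement: since $n_3 = 1$ and $\sigma_3 = 3$, the exceptional class $E_{3,0}$ is a $(-1)$-curve fixed by $\hat f^*$ with $\hat f^* E_{3,0} = H - E_{1,0} - E_{2,0}$ playing a degenerate role, and contracting it gives a rational surface built from only $n_1 + n_2$ blowups. If $n_1 + n_2 \le 9$ — which is not automatic — one could invoke Proposition \ref{toofew}; but in general one should instead argue directly from \eqref{charpoly} that $\tilde P$ has all roots on the unit circle.

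The cleanest route, which is the one I would actually write up, is to avoid the geometric reduction and simply observe the following: after dividing out $(1-\lambda)$, the remaining factor $\tilde P$ is a monic integer polynomial that is reciprocal (palindromic up to sign), and one checks from the explicit formula that $\tilde P(\lambda)$ has \emph{no} root outside the unit circle by the same argument as in Proposition \ref{fstar} — the intersection form on the reduced lattice still has exactly one positive eigenvalue, so a Salem root would have to appear for $\tilde f^*$, but the reciprocity together with the coefficient bounds coming from \eqref{charpoly} with $n_3 = 1$ forces the product over roots to behave so that no such root exists; alternatively, and most simply, once we know $(1-\lambda) \mid P(\lambda)$ and that the putative unique root $\lambda_1 > 1$ of $P$ (by Proposition \ref{fstar}) is \emph{not} equal to $1$, we get that $\lambda_1$ is a root of $\tilde P$, and then re-examining the Bedford--Kim formula with $n_3 = 1$ shows $\tilde P$ is a product of cyclotomic polynomials — this is where the footnote's appeal to Kronecker's theorem applies. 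The main obstacle is the bookkeeping in the last step: making the passage ``$P$ has $(1-\lambda)$ as a factor and no Salem root $\Rightarrow$ all roots on the unit circle'' airtight requires knowing that $\tilde P$ inherits the property that its \emph{only possible} off-circle root is a single real $\lambda_1 > 1$, which in turn needs either the lattice-signature argument on the reduced surface (hence some care about whether $n_1+n_2$ exceeds $9$) or a direct factorization of $\tilde P$ from \eqref{charpoly}; I expect the direct factorization, specializing $n_3 = 1$, to be the decisive and slightly tedious computation.
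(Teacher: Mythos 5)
Your route is genuinely different from the paper's, but as written it has two concrete gaps. First, the factorization step fails in one of the two relevant sub-cases: when $\sigma$ also fixes the indices $1,2$ (e.g.\ $\sigma=\id$), the terms $\lambda^{1+n_1}$ and $\lambda^{1+n_2}$ in $p(\lambda)$ are \emph{not} divisible by $(1-\lambda)$; indeed with $n_3=1$ and $\sigma=\id$ one has $p(1)=2\neq 0$, so your claim ``every term of $p(\lambda)$ is now divisible by $(1-\lambda)$'' is false. (In that sub-case $(\lambda-1)\mid P$ does hold, but only because $(-1)^{\operatorname{ord}\sigma}=-1$ makes the two halves of the Bedford--Kim formula cancel at $\lambda=1$ --- a fact independent of $n_3=1$, so it cannot carry the argument by itself.) Second, and more seriously, the decisive claim --- that after removing the linear factor the cofactor has all roots on the unit circle, i.e.\ is a product of cyclotomics --- is never established: the appeal to ``coefficient bounds,'' to the signature of a ``reduced lattice,'' or to Proposition \ref{toofew} (which needs $n_1+n_2\le 9$) is either circular or incomplete, and you explicitly defer the computation you would need. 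The computational route \emph{can} be completed: specializing \eqref{charpoly} to $n_3=1$, $\sigma_3=3$ and simplifying gives $P(\lambda)=(\lambda-1)^2(\lambda^{n_1+n_2}-1)$ when $\sigma$ fixes $1,2$ and $P(\lambda)=(\lambda-1)^2(\lambda^{n_1}+1)(\lambda^{n_2}+1)$ when $\sigma$ swaps them (the cyclic case is excluded by the hypothesis that $\sigma$ has a fixed point), whence all roots are visibly roots of unity --- but this computation is exactly what your proposal leaves undone.

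For comparison, the paper argues quite differently and without any computation with $P$: if $\lambda$ is a root with $|\lambda|\neq 1$ and $v$ a corresponding eigenvector of the abstract isometry $\hat f^*$, then $v$ is isotropic; since $n_j=1$ and $\sigma(j)=j$ force the isotropic class $H-E_{j,0}$ to be fixed by $\hat f_*$, one gets $\pair{H-E_{j,0}}{v}=0$, and the signature $(1,n)$ of the intersection form then makes $v$ proportional to $H-E_{j,0}$, contradicting $\lambda\neq 1$. If you want to keep your approach, either carry out the explicit factorization above in both sub-cases, or replace it with this lattice-theoretic argument.
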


\begin{proof}
Suppose e.g. that $j=1$ and that $P$ has a root $\lambda$ with magnitude different from $1$.   Recalling the discussion after Proposition \ref{fstar}, we let $\hat f^*:V\to V$ be the `abstract isometry' associated to the data $1,n_2,n_3,\sigma$.  Then $f^*v = \lambda v$ for some $v\in V$.  

Using the fact that $f_*$ is both inverse and adjoint to $f^*$, we find 
$$
\pair{v}{v} = \pair{v}{\hat f_*\hat f^*v} = \pair{\hat f^*v}{\hat f^*v} = |\lambda|^2 \pair{v}{v}.
$$  
Thus $\pair{v}{v}=0$.  Now it follows from Proposition \ref{fstar} that $\hat f_*(H-E_{1,0}) = H-E_{1,0}$.  Thus
$$
\pair{H-E_{1,0}}{v} = \pair{\hat f_*(H-E_{1,0})}{v} = \pair{H-E_{1,0}}{\hat f^*v} = \lambda\pair{H-E_{1,0}}{v}.
$$
We infer that $\pair{H-E_{1,0}}{v} = 0$.  Since $H-E_{1,0}$ also has vanishing self-intersection, and the intersection form has exactly one positive eigenvalue, it follows that $v$ is a multiple of $H-E_{1,0}$.  Hence $\lambda= 1$ contrary to assumption.
\end{proof}

\section{Irreducible cubics}

\begin{cor}
\label{nodal}
Suppose that $f$ is a quadratic transformation properly fixing a nodal irreducible cubic curve $C$.  If $f$ lifts to an automorphism on some modification $X\to \cp^2$, then the topological entropy of $f$ vanishes.
\end{cor}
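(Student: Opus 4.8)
The plan is to reduce the statement directly to Corollary \ref{minus1} and Theorem \ref{plus1}, using the fact recorded in \S1.1 that for a nodal cubic the multiplier $a$ of the induced automorphism $f|_C$ can only be $\pm 1$; and since $C$ is irreducible it has a single irreducible component, which $f$ necessarily maps to itself. So only two cases arise.

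If $a=-1$, the hypotheses of Corollary \ref{minus1} are met --- $f$ fixes the unique component of $C$ and the multiplier is $-1$ --- so that corollary forces $f$ to be linear; a linear map has zero entropy (indeed, this case does not genuinely occur for a quadratic transformation). If instead $a=1$, then $f$ permutes the single component of $C$ ``transitively'' in the trivial sense, so Theorem \ref{plus1} applies and $f$ lifts to an automorphism $\hat f:X\to X$ carrying an invariant elliptic fibration. It then suffices to note that such an automorphism has vanishing entropy: the fiber class $F\in H^2(X,\R)$ is nonzero, $\hat f^*$-invariant, and satisfies $F^2=0$, so the eigenvalue argument of Proposition \ref{toofew} --- an $\hat f^*$-eigenvector $\theta$ with $\theta^2=0$ and eigenvalue $\lambda\neq 1$ is orthogonal to $F$, hence a multiple of $F$ by the signature of the intersection form, hence itself $\hat f^*$-invariant, a contradiction --- shows the spectral radius of $\hat f^*$ equals $1$, whence the entropy is $\log 1 = 0$ by Gromov--Yomdin. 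Since the first dynamical degree is a birational conjugacy invariant, the entropy of $f$ computed through any automorphism lift vanishes as well.

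I do not expect a genuine obstacle here: the corollary is essentially bookkeeping on top of results already in hand. The only points that need a moment's care are checking that the ``transitive permutation'' hypothesis of Theorem \ref{plus1} is automatic for an irreducible cubic, and interpreting ``the topological entropy of $f$'' through an automorphism lift --- legitimate because entropy equals $\log\lambda_1$ and $\lambda_1$ is a birational invariant.
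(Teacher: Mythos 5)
Your proof is correct and follows the paper's own route exactly: the nodal cubic forces the multiplier of $f|_{C_{reg}}$ to be $\pm 1$, and irreducibility lets one conclude via Corollary \ref{minus1} in the $-1$ case and Theorem \ref{plus1} in the $+1$ case. The extra detail you supply (the invariant fiber-class argument for zero entropy, and birational invariance of $\lambda_1$) only fleshes out what the paper asserts immediately after Theorem \ref{plus1}.
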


\begin{proof}
Since $\pico(C) \cong \C^*$, the multiplier of $f|_{C_{reg}}$ is $\pm 1$.  Since $C$ is irreducible, the assertion follows from Corollary \ref{minus1} and Theorem \ref{plus1}.
\end{proof}

\begin{cor}
\label{smooth generic}
Suppose that $f$ is a quadratic transformation properly fixing a smooth cubic curve $C$.  If $f$ has positive entropy and lifts to an automorphism of some modification $X\to \cp^2$, then either
\begin{itemize}
 \item $C \cong \C/(\Z+i\Z)$ and the multiplier for $f|_C$ is $\pm i$; or
 \item $C \cong \C/(\Z+e^{\pi i/3}\Z)$ and the multiplier for $f|_C$ is a primitive cube root of $-1$.
\end{itemize}
\end{cor}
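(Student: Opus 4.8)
The plan is to combine the classification of multipliers for a smooth cubic recorded in \S 1.1 with Corollary \ref{minus1} and Theorem \ref{plus1}. Since $C$ is smooth it is irreducible, so $\pico(C)\cong\C/\Gamma$ for a rank $2$ lattice $\Gamma$, and $f$ automatically fixes the unique irreducible component of $C$. Let $a\in\C^*$ be the multiplier of $f|_C$, so $a\Gamma=\Gamma$. According to \S 1.1, either $a=\pm1$, or (up to rescaling) $\Gamma$ is the Gaussian lattice $\Z+i\Z$ and $a$ is a fourth root of unity, or $\Gamma$ is the Eisenstein lattice $\Z+e^{\pi i/3}\Z$ and $a$ is a sixth root of unity.

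First I would rule out $a=\pm1$. If $a=1$, then since $f$ trivially permutes the one component of $C$ transitively, Theorem \ref{plus1} applies: $f$ lifts to an automorphism preserving an elliptic fibration, so its first dynamical degree equals $1$, and, this being a birational invariant, $\hat f$ has zero entropy, contrary to hypothesis. If $a=-1$, then since $f$ fixes the one component of $C$, the first assertion of Corollary \ref{minus1} says $f$ is linear, which again gives zero entropy. Hence $a\neq\pm1$, and the classification forces $\Gamma$ to be the square lattice $\Z+i\Z$ or the hexagonal lattice $\Z+e^{\pi i/3}\Z$.

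It then remains to identify $a$ in each of the two surviving cases. If $\Gamma=\Z+i\Z$, the possible multipliers are the fourth roots of unity; excluding $\pm1$ leaves $a=\pm i$, as claimed. If $\Gamma=\Z+e^{\pi i/3}\Z$, the possible multipliers are the six sixth roots of unity. I have already excluded $\pm1$, and the second assertion of Corollary \ref{minus1} (again using that $f$ fixes the single component) excludes the primitive cube roots of unity $e^{\pm2\pi i/3}$, since those force $\hat f$ to have vanishing entropy. What survives is exactly $\{e^{\pi i/3},e^{-\pi i/3}\}$; since $(e^{\pm\pi i/3})^3=-1$ and $e^{\pm\pi i/3}\neq-1$, these are precisely the primitive cube roots of $-1$, completing the proof.

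I do not anticipate a real obstacle: once Corollary \ref{minus1} and Theorem \ref{plus1} are available, the corollary is bookkeeping over the finite list of multipliers. The only things needing attention are checking that the hypotheses of those two results hold automatically here — $f$ fixes each irreducible component, and a fortiori permutes the components transitively, because $C$ is irreducible — and observing that the primitive sixth roots of unity are the same as the primitive cube roots of $-1$.
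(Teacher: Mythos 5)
Your proposal is correct and follows essentially the same route as the paper: read off the finite list of possible multipliers for a smooth cubic from \S 1.1, then eliminate $a=\pm1$ and the primitive cube roots of unity via Corollary \ref{minus1} and Theorem \ref{plus1}, leaving only the square torus with $a=\pm i$ and the hexagonal torus with $a$ a primitive sixth root of unity. Your extra remarks (irreducibility of a smooth cubic, birational invariance of $\lambda_1$ in the $a=1$ case) just make explicit what the paper leaves implicit.
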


\begin{proof}
If we are not in one of the two cases described in the conclusion, then the multiplier for $f|_C$ must be a square or cube root of $1$.  From Corollary \ref{minus1} and Theorem \ref{plus1}, we deduce that if $f$ lifts to an automorphism, then the entropy of $f$ is zero. 
\end{proof}

\begin{eg}
\label{smoothsymmetric}
Suppose $C\cong\C/(\Z+i\Z)$.  Then remarkably, there are quadratic transformations properly fixing $C$ and lifting to automorphisms with positive entropy.  For example, Theorem \ref{quadauts} gives us a quadratic transformation $f$ properly fixing $C$ with $I(f) = \{p_1^+,p_2^+,p_3^+\} = \{i/9,4i/9,7i/9\}$ and such that $f|_C$ is given by $z\mapsto iz + 5/9$. Condition \ref{4th} from the same theorem tells us $p_1^- = ip_1^+ - 2b = 7/9$, and similarly $p_2^- = 4/9$, $p_3^- = 1/9$.

Iterating $f$ gives 
$$
p_1^- = 7/9\mapsto 7i/9 + 5/9 \mapsto -7/9 + 5i/9 \mapsto 7i/9 = p_3^+.
$$
Similarly $f^3(p_2^-) = p_1^+$ and $f^3(p_3^-) = p_2^+$.  In summary, $f$ realizes the orbit data $\sigma:1\mapsto 3\mapsto 2$, $n_1=n_2=n_3 = 4$.

On blowing up the twelve points $f^k(p_j^-)$, $0\leq k \leq 3$, $1\leq j\leq 3$, we obtain an automorphism $\hat f:X\to X$.  By \eqref{charpoly}, the characteristic polynomial for $\hat f^*$ is 
$P(\lambda) = \lambda^{13}-2\lambda^{12} + 3\lambda^9-3\lambda^8 + 3\lambda^5 - 3\lambda^4 + 2\lambda - 1$, which has largest root $\lambda_1 = 1.722\dots$.  Hence $\hat f$ has entropy $\log\lambda > 0$.

We make two further observations about this example.  The restriction of $\hat f:X\to X$ to (the proper transform of) $C$ is periodic with period $4$.  Hence $\hat f^4$ is an example of a positive entropy automorphism of a rational surface that fixes a smooth elliptic curve pointwise.  Secondly, since $C$ has negative self-intersection $C^2 = 9-12$ in $X$ and since $f(C) = C$, one can contract $C$ equivariantly to obtain an automorphism $\check f:\check X\self$ with positive entropy on a normal (possibly not projective) surface with a simple elliptic singularity.
\end{eg}

On the other hand, as Eric Riedl points out, not all orbit data that looks plausible (i.e. $n_j\leq 4$) for the `square' torus is actually realizable.

\begin{eg}
Let $C = \C/(\Z+i\Z)$ again, and consider the orbit data $n_1=n_2=n_3=4$, $\sigma=\id$.  If $f$ properly fixes $C$ and realizes this data, then we have $f|_C:z\mapsto iz + b$ for some $b\in C$, and $(f|_C)^3(p_j^-) = p_j^+$.  Since $(f|_C)^4 = \id$, this is equivalent to $f|_C(p_j^+) = p_j^-$.  Hence condition \ref{4th} from Theorem \ref{quadauts} implies $ap_j^+ + b \sim p_j^- \sim ap_j^+ - 2b$, which gives $3b=0$, contrary to the last assertion in the proposition.
\end{eg}

The final irreducible case occurs when $C$ has a cusp, and in this one it is much easier to construct automorphisms.
In order to state our result, let us make a convenient definition.  Suppose we are given orbit data $n_1,n_2,n_3\geq 1$ and a quadratic transformation $f$ properly fixing $C$.  We will say that $f$ \emph{tentatively realizes} the orbit data if $(f|_{C_{reg}})^{n_j-1}(p_j^-) = p_{\sigma_j}^+$ for each $n_j$.  We stress that this does not mean that $f$ \emph{realizes} the orbit data in the fashion described in \S \ref{auts}.  For instance, one might find that $f^{n-1}(p_1^-) = p_{\sigma_1^+}$ for some $n<n_1$ so that $f$ actually realizes the orbit data $n,n_2,n_3,\sigma$ instead of $n_1,n_2,n_3,\sigma$.  Tentative realization is, however, a necessary precondition for realization.

\begin{thm} 
\label{tentative}
Let $C$ be a cuspidal cubic curve, $n_1,n_2,n_3\geq 1$ and $\sigma\in\Sigma_3$ be orbit data.  If $f$ is a quadratic transformation properly fixing $C$ that tentatively realizes this orbit data, then the multiplier for $f|_{C_{reg}}$ is a root of the corresponding characteristic polynomial $P(\lambda)$.  Conversely, there exists a tentative realization $f$ for each root $\lambda = a$ of $P(\lambda)$ that is not a root of unity, and $f$ is unique up to conjugacy by linear transformations preserving $C$.
\end{thm}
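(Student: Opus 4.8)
The plan is to reduce both halves of the statement to a single $3\times 3$ homogeneous linear system over $\Z[\lambda]$ whose determinant, fortuitously, has the same zeros as the Bedford--Kim polynomial $P$. First I would normalize so that $C$ is the projective closure of $\{y=x^3\}$ and identify $\creg$ with $\C=\pico(C)$ via $\kappa(x,x^3)=x$; since the three affine intersection points of $\creg$ with a line sum to zero this satisfies $(*)$, and the linear automorphisms of $(\cp^2,C)$ are exactly the scalings $h_\lambda\colon(x,y)\mapsto(\lambda x,\lambda^3 y)$, acting on $\creg$ by $z\mapsto\lambda z$. Given a quadratic transformation $f$ properly fixing $C$ with multiplier $a$, write $f|_{\creg}\colon z\mapsto az+b$ and $I(f)=\{p^+_1,p^+_2,p^+_3\}$; Theorem~\ref{mainthm1} then gives $s\eqdef p^+_1+p^+_2+p^+_3\ne 0$, $3b=as$, and $p^-_j=ap^+_j-2b$. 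Since $f$ preserves $\creg$, iterating $z\mapsto az+b$ and substituting $p^-_j=ap^+_j-2b$ turns the tentative-realization conditions $(f|_{\creg})^{n_j-1}(p^-_j)=p^+_{\sigma_j}$ into
\[
 a^{n_j}p^+_j-p^+_{\sigma_j}+\tfrac{a}{3}\,\beta_j\,(p^+_1+p^+_2+p^+_3)=0\qquad(j=1,2,3),
\]
where $\beta_j=(1+a+\dots+a^{n_j-2})-2a^{n_j-1}\in\Z[a]$; I abbreviate this as $M(a)\mathbf{p}=0$, with $\mathbf{p}=(p^+_1,p^+_2,p^+_3)^{T}$ and $M(\lambda)=\Delta-\Pi+\tfrac{\lambda}{3}\,\beta\,\mathbf{1}^{T}$, where $\Delta=\operatorname{diag}(\lambda^{n_1},\lambda^{n_2},\lambda^{n_3})$, $\Pi$ is the permutation matrix of $\sigma$, $\beta=(\beta_1,\beta_2,\beta_3)^{T}$, and $\mathbf{1}=(1,1,1)^{T}$.

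For the forward direction: if $f$ tentatively realizes the orbit data then $\mathbf{p}\ne 0$ (since $s\ne 0$) lies in $\ker M(a)$, so $\det M(a)=0$, and the point is to see this forces $P(a)=0$. The crux --- the ``fortuitous coincidence'' flagged just after \eqref{charpoly} --- is that $\det M(\lambda)$ and $P(\lambda)$ have the same zeros in $\C\setminus\{0,1\}$. I would check this by a direct computation handling the three cycle types of $\sigma$ separately, using $\det(\Delta-\Pi)=\prod_{\gamma}(\lambda^{N_\gamma}-1)$ (the product over cycles $\gamma$ of $\sigma$, $N_\gamma=\sum_{j\in\gamma}n_j$) together with the matrix determinant lemma; as the resulting relation is an identity of polynomials, it suffices to verify it where $\Delta-\Pi$ is invertible. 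Since the multiplier $a$ is always nonzero, and since $P(1)=0$ for every orbit data (a short computation with the Bedford--Kim formula $P(\lambda)=\lambda^{1+\sum n_j}p(1/\lambda)+(-1)^{\operatorname{ord}\sigma}p(\lambda)$), the conclusion $P(a)=0$ follows in every case.

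For the converse, let $a$ be a root of $P$ that is not a root of unity, so $a\notin\{0,1\}$ and hence $\det M(a)=0$; I would pick $\mathbf{p}\ne 0$ in $\ker M(a)$. It satisfies $s=\sum p^+_j\ne 0$: were $s=0$, the system would read $a^{n_j}p^+_j=p^+_{\sigma_j}$, and composing around each cycle $\gamma$ of $\sigma$ would give $a^{N_\gamma}p^+_j=p^+_j$ with $N_\gamma\ge 1$, forcing $\mathbf{p}=0$ because $a$ is not a root of unity. Hence the $p^+_j$ are not collinear, so Theorem~\ref{mainthm1} (with $b\eqdef as/3$) supplies a quadratic transformation $f$ properly fixing $C$ with $I(f)=\{p^+_j\}$ and $f|_{\creg}\colon z\mapsto az+b$; since $p^-_j=ap^+_j-2b$ and the relation $M(a)\mathbf{p}=0$ is exactly $(f|_{\creg})^{n_j-1}(p^-_j)=p^+_{\sigma_j}$, this $f$ tentatively realizes the orbit data and has multiplier $a$. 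For uniqueness, $\det(\Delta-\Pi)=\prod_{\gamma}(a^{N_\gamma}-1)\ne 0$ (again because $a$ is not a root of unity), so $M(a)=(\Delta-\Pi)+\tfrac{a}{3}\beta\mathbf{1}^{T}$ is an invertible matrix plus a rank-one matrix and hence has rank $\ge 2$; with $\det M(a)=0$ its kernel is one-dimensional. Therefore any tentative realization with multiplier $a$ has indeterminacy set $\{\lambda p^+_j\}$ for some $\lambda\in\C^*$, and conjugating $f$ by $h_\lambda$ yields a quadratic transformation with the same indeterminacy set and the same restriction to $\creg$; by the uniqueness clause of Theorem~\ref{mainthm1} the two agree, so $f$ is unique up to conjugacy by a linear transformation preserving $C$.

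The one genuinely non-formal step will be the polynomial identity relating $\det M(\lambda)$ to $P(\lambda)$: there is no evident a priori reason the determinant arising from tentative realization should coincide with the cohomological characteristic polynomial $\det(\hat f^{*}-\lambda)$ --- the latter belongs to an honest automorphism, whereas a tentative realization need not lift to one --- so this coincidence must be confirmed by hand. Everything else is bookkeeping with the group law and Theorem~\ref{mainthm1}.
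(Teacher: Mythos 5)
Your argument is correct, and its skeleton is the same as the paper's: use Theorem \ref{quadauts}/Theorem \ref{mainthm1} to turn tentative realization into affine conditions in $\pico(C)\cong\C$, observe that these determine the indeterminacy points up to the one-parameter group of linear automorphisms of $(\cp^2,C)$, and then lean on the ``fortuitous coincidence'' that the resulting solvability condition in $a$ is exactly $P(a)=0$. Where you differ is in the middle: the paper kills the conjugation freedom at the outset by normalizing the fixed point of $f|_{\creg}$ to $1/3$, solves the recursion explicitly to get the closed-form expressions \eqref{id}, \eqref{swap}, \eqref{cycle} for the $\tilde p_j^-$ in each cycle type, and reduces existence to the single scalar equation $\sum \tilde p_j^- = a-2$; you instead keep the points as unknowns, package the conditions as a homogeneous system $M(a)\mathbf{p}=0$, and get existence from $\det M(a)=0$ and uniqueness-up-to-conjugacy from $\dim\ker M(a)=1$ together with the scaling action $z\mapsto\lambda z$. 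Your version buys a uniform treatment of the three cycle types (via the matrix determinant lemma and $\det(\Delta-\Pi)=\prod_\gamma(\lambda^{N_\gamma}-1)$), a clean uniqueness argument, and a slightly more careful forward direction (the paper's proof tacitly assumes $a\neq 1$, whereas you dispose of $a=1$ and roots of unity via $P(1)=0$, which indeed holds since $\hat f^*$ fixes $K_X$); the paper's version buys the explicit formulas \eqref{id}--\eqref{cycle}, which are not merely expository---they are used again in the proof of Theorem \ref{actual}---so if you adopted your formulation you would still want to solve for the kernel explicitly. Note that both you and the paper leave the decisive identity unverified: the paper says ``one verifies readily'' (with a footnote to McMullen), and you defer it to a hand computation; for what it is worth, in the case $\sigma=\id$ one can check the clean polynomial identity $(\lambda-1)\det M(\lambda)=-P(\lambda)$ (e.g. for $n_1=n_2=n_3=1$ both sides equal $-(\lambda-1)^3(\lambda+1)$), which makes your ``same zeros in $\C\setminus\{0,1\}$'' claim, and hence both directions of your argument, go through; the transposition and cyclic cases are the same kind of computation against the Bedford--Kim formula \eqref{charpoly}. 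Two microscopic points to tidy: $a\neq 0$ in the converse is harmless but should be justified (either from $P(0)=\pm1$, since $\det\hat f^*=\pm 1$, or because the identity already has the factor $(\lambda-1)$ rather than $\lambda$), and your kernel vector consists of labelled points, so the proportionality $\mathbf{p}_g=\lambda\mathbf{p}_f$ should be (and in your write-up implicitly is) applied to the labelling coming with the tentative realization.
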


\begin{proof}
Since $a\neq 1$ by hypothesis, the restriction $f|_{C_{reg}}$ is given by $f(p)\sim ap + b$ which has a unique fixed point $p_0 \sim b/(1-a)$.  We let $\tilde p = \kappa(p)-\kappa(p_0) \in \pico(C)\cong\C$ for any point $p\in C_{reg}$.  Hence $\widetilde{f^k(p)} = a^k\tilde p$.  Proposition \ref{fixingauts} and the fact that all $a\in\C^*$ are possible multipliers for $C$ allow us to conjugate by $T\in\Aut(\cp^2)$ to arrange that $p_0 \sim \frac13$.  Items \ref{2nd} and \ref{4th} in Theorem \ref{quadauts} then become
\begin{itemize}
\item $\sum \tilde p_j^- = a-2$;
\item $\tilde p_j^- = a\tilde p_j^+ + a-1$, for $j=1,2,3$.
\end{itemize}
Therefore if the points $p_j^-\in C_{reg}$ satisfy the first of these conditions, Theorem \ref{quadauts} gives us a quadratic transformation $f$ that properly fixes $C$ with multiplier $a$ and $I(f^{-1}) = \{p_1^-,p_2^-,p_3^-\}$.  The second condition is just a restatement of condition \ref{4th} in Theorem \ref{quadauts}.

Now $f$ tentatively realizes the given orbit data if and only if $a^{n_j-1}\tilde p_j^- = \tilde p_{\sigma_j}^+$ for 
$j=1,2,3$.  If $\sigma$ is the identity permutation, then in light of the second condition above, this is equivalent to
\begin{equation}
\label{id}
\tilde p_j^- = \frac{a-1}{1-a^{n_j}}, \quad j=1,2,3.
\end{equation}
The first condition in turn gives
$
\sum_j \frac{1}{1-a^{n_j}} = \frac{a-2}{a-1}.
$
One verifies readily that this is equivalent\footnote{This fortunate coincidence is largely accounted for in \S7 of \cite{mcm} whose arguments show that the multiplier $a$ for a tentative realization must be a root of $P(\lambda)$ and conversely that each root of $P(\lambda)$, disregarding multiplicity, gives rise to at least one tentative realization.} to $P(a) = 0$, where $P$ is the characteristic polynomial for the orbit data $n_1,n_2,n_3,\id$.  This proves the theorem when $\sigma = \id$.

The cases where $\sigma$ is an involution or $\sigma$ is cyclic are similar.  If $\sigma$ is the involution swapping e.g. indices $1$ and $2$, then one finds that
\begin{equation}
\label{swap}
\tilde p_1^- = \frac{(a-1)(1+a^{n_2})}{1-a^{n_1+n_2}}, \quad
\tilde p_2^- = \frac{(a-1)(1+a^{n_1})}{1-a^{n_1+n_2}},\quad
\tilde p_3^- = \frac{a-1}{1-a^{n_3}},
\end{equation} 
where $a$ is a root of the characteristic polynomial associated to $n_1,n_2,n_3,\sigma$.
And if $\sigma$ is the cyclic permutation $\sigma:1\mapsto 2\mapsto 3$, then
\begin{equation}
\label{cycle}
\begin{array}{c}
\tilde p_1^- = \frac{(a-1)(1+a^{n_3} + a^{n_2+n_3})}{1-a^{n_1+n_2+n_3}},\quad
\tilde p_2^- = \frac{(a-1)(1+a^{n_1} + a^{n_3+n_1})}{1-a^{n_1+n_2+n_3}},\\ \\
\tilde p_3^- = \frac{(a-1)(1+a^{n_2} + a^{n_1+n_2})}{1-a^{n_1+n_2+n_3}}.
\end{array}
\end{equation}
\end{proof}

As it turns out, most of the tentative realizations given by Theorem \ref{tentative} actually do realize the given orbit data.  

\begin{thm}
\label{actual}
Suppose in Theorem \ref{tentative} that $a$ is a root of $P(\lambda)$ that is not equal to a root of unity, and let $f$ be the tentative realization corresponding to $a$ of the given orbit data $n_1,n_2,n_3,\sigma$.  Then $f$ realizes the orbit data if 
and only if we are not in one of the following two cases
\begin{itemize}
\item $\sigma \neq \id$ and $n_1=n_2=n_3$;
\item $\sigma$ is an involution swapping indices $i$ and $j$ such that $n_i = n_j$.
\end{itemize}
\end{thm}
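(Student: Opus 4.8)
The plan is to compare \emph{tentative} realization with \emph{actual} realization by checking, for each $j$, whether the orbit segment $p_j^-, f(p_j^-), \dots, f^{n_j-1}(p_j^-)$ is as long as advertised and lands correctly, without prematurely hitting an indeterminacy point of $f$ or $f^{-1}$, and without two distinct segments colliding in a way that forces a shorter effective $n_j$ or reorders $\sigma$. Because $f$ properly fixes the cuspidal cubic $C$ and $I(f), I(f^{-1}) \subset C_{reg} \cong \C$, the entire analysis takes place on the line $\C$ via the coordinate $\tilde p = \kappa(p) - \kappa(p_0)$, under which $f$ acts by $\tilde p \mapsto a\tilde p$. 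So the orbit of $p_j^-$ is just the geometric progression $a^k \tilde p_j^-$, $k = 0, 1, \dots$, and the formulas \eqref{id}, \eqref{swap}, \eqref{cycle} give closed expressions for $\tilde p_j^-$ and hence for every point in every orbit segment.

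First I would reduce ``$f$ realizes the orbit data'' to a list of finitely many non-coincidence conditions among the numbers $\{a^k \tilde p_j^- : 0 \le k \le n_j - 1,\ j = 1,2,3\}$ and the three points $\tilde p_{\sigma_j}^+$. Using $\tilde p_j^+ = a^{-1}(\tilde p_j^- - (a-1)) = a^{n_j - 1}\tilde p_j^-$ (the latter equality being exactly tentative realization), these conditions become polynomial identities in $a$; since $a$ is an algebraic number that is not a root of unity, such an identity holds if and only if it holds formally, i.e. the corresponding rational function of $a$ vanishes identically. Thus the problem becomes: for which orbit data does one of the forbidden coincidences hold \emph{identically} in $a$? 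I would run through the cases $\sigma = \id$, $\sigma$ an involution, $\sigma$ a $3$-cycle separately, substituting the explicit formulas and clearing denominators. In the $\sigma = \id$ case the three orbits are genuinely separate cycles (the $S$-cycle structure recalled after Proposition \ref{fstar}), and one checks that no collision is forced; here I expect to show realization always holds. In the involution case swapping $i,j$, the $i$- and $j$-orbits are linked into a single $S$-cycle of length $n_i + n_j$, and when $n_i = n_j$ the symmetry of formulas \eqref{swap} (note $\tilde p_i^-$ and $\tilde p_j^-$ become related by $\tilde p_i^- = a^{n_i}\tilde p_j^-$-type identities) forces $f^{n_i - 1}(p_i^-)$ to coincide with a point already appearing in the $j$-orbit, so the effective orbit length drops — this is precisely the excluded case. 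Similarly for a $3$-cycle with $n_1 = n_2 = n_3$, the three orbits merge into one $S$-cycle of length $3n$ and the cyclic symmetry of \eqref{cycle} produces an early return.

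The main obstacle is the bookkeeping in the linked cases: I must verify that the coincidence which destroys realization really does occur identically in $a$ when $n_i = n_j$ (resp. $n_1 = n_2 = n_3$), and, conversely — the harder direction — that \emph{no} coincidence occurs when the $n_j$ are not all equal (resp. the involution's two indices carry different $n$'s). For the converse I would argue that any identity $a^k \tilde p_j^- = a^\ell \tilde p_{j'}^-$ holding for all $a$ would, after clearing the common denominator $1 - a^{N}$ (with $N = \sum n_j$ or $n_i + n_j$ as appropriate), equate two integer polynomials in $a$ whose exponent-supports can be read off from \eqref{id}--\eqref{cycle}; matching supports forces the $n_j$ into the forbidden equal configuration. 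One must also separately rule out the degenerate possibilities that some $\tilde p_j^- = 0$ or that $p_j^- \in I(f)$ with $n_j > 1$, but each of these again translates to $1 - a^m = 0$ for some $m$, impossible since $a$ is not a root of unity. Finally, once realization is established, the last sentence of Theorem \ref{mainthm2} (the lift to an automorphism $\hat f : X \to X$ with entropy $\log\lambda_1$) follows from the lifting discussion in \S\ref{auts} together with Proposition \ref{fstar} and the fact, from that proposition, that $a$ — being a root of $P(\lambda)$ — is Galois conjugate to the unique root $\lambda_1 > 1$.

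\qed
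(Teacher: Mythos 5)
Your central reduction contains a genuine gap. You claim that because $a$ is ``an algebraic number that is not a root of unity,'' a coincidence relation such as $a^n\tilde p_j^- = \tilde p_{\sigma_i}^+$ ``holds if and only if it holds formally,'' i.e.\ identically in $a$. That is false: $a$ is a root of the characteristic polynomial $P$, so it satisfies plenty of nontrivial polynomial relations over $\Z$ that do not hold identically ($P(a)=0$ being one). Not being a root of unity only lets you conclude $m=0$ from a relation of the special form $a^m=1$; it does not let you pass from ``the relation holds at $a$'' to ``the relation holds as a rational-function identity.'' The paper's proof gets around exactly this point by a different mechanism: since $a$ is a root of $P$, Proposition \ref{fstar} says $a$ is Galois conjugate over $\Z$ to $a^{-1}$, so the bad relation \eqref{badrelation} (a polynomial equation over $\Z$ in $a$, via \eqref{id}--\eqref{cycle}) also holds with $a$ replaced by $a^{-1}$; combining the two relations produces a pure power relation (e.g.\ $a^{n_i+n_j-2n}=1$, or $(a^{n+n_3}-1)(a^{n_1}-a^{n_2})=0$ in the cyclic case), and only \emph{then} does ``not a root of unity'' enter, yielding numerical constraints like $n_i+n_j=2n$ or $n_1=n_2$. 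Without this (or some substitute), your ``harder direction'' does not go through.

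A second, related omission: your plan asserts that outside the excluded cases \emph{no} coincidence occurs, but the correct statement (and what the paper proves) is weaker — coincidences at the specific value $a$ may well occur, e.g.\ $f^{n-1}(p_j^-)=p_{\sigma_i}^+$ with $n_i<n_j$ and $n_i+n_j=2n$, in which case the orbit of $p_j^-$ simply contains that of $p_i^-$; realization survives because in the lifting procedure of \S\ref{auts} the shorter segment $p_i^-,\dots$ is blown up before the longer one, and similarly when $n_i=n_j$ one gets $p_i^-=p_j^-$ as an infinitely near pair, which is compatible with realization exactly when $\sigma=\id$. So even after fixing the algebraic step you would still need this ``harmless coincidence'' analysis (blowup ordering and infinitely near points) to conclude; your proposal has no counterpart to it. Your treatment of the excluded cases themselves (where \eqref{swap} and \eqref{cycle} force $\tilde p_i^-=\tilde p_j^-$ when $n_i=n_j$, resp.\ all equal when $n_1=n_2=n_3$) is in the right spirit, but the failure of realization there is argued in the paper via the incompatibility of the infinitely-near structure of $I(f)$, $I(f^{-1})$ with a nontrivial $\sigma$, not merely by ``the effective orbit length drops.''
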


\begin{proof}
The tentative realization $f$ will necessarily realize \emph{some} orbit data.  The problem occurs when the orbit of some point $p_j^-$ intersects $I(f)$ too soon and/or at the wrong point so that the orbit data that is realized differs from the given data.

That is, we have $f^{n-1}(p_j^-) = p_{\sigma_i}^+$ for some $i,j$ and some positive $n\in\N$, where $i\neq j$ and/or $n<n_i$.  Using the notation from the proof of Theorem \ref{tentative}, this becomes 
\begin{equation}
\label{badrelation}
 a^n \tilde p_j^- = \tilde p_{\sigma_i}^+ = a^{n_i} \tilde p_i^-
\end{equation}
In particular, we may suppose $i\neq j$ since $a$ is not a root of unity.  Since $\tilde p_i^-$, $\tilde p_j^-$ are given by rational expressions (over $\Z$) in $a$, \eqref{badrelation} amounts to a polynomial equation satisfied by $a$.  But $a$ is a root of the characteristic polynomial for the orbit data and therefore by Proposition \ref{fstar} Galois conjugate to $a^{-1}$.  Hence \eqref{badrelation} remains true if we replace $a$ by $a^{-1}$ throughout.  

Assume for now that $\sigma = \id$ or that $\sigma$ exchanges two indices.  Replacing $a$ by $a^{-1}$ in the formula for $\tilde p_j^-$ amounts to replacing $\tilde p_j^-$ by $\tilde p_{\sigma_j}^+ = a^{n_j -1} \tilde p_j^-$.  One can verify this directly using the formulas \eqref{id}, \eqref{swap}.  However, this follows also on general principle from the fact that (given the normalization $p_0 \sim 1/3$) there is a unique tentative realization $g$ of the orbit data $n_1,n_2,n_3,\sigma$ corresponding to the multiplier $a^{-1}$.  Since $\sigma = \sigma^{-1}$, one can relabel indices $j \mapsto \sigma(j)$ and see that $f^{-1}$ gives such a realization. Hence $g=f^{-1}$. The upshot is that $a$ must satisfy the second equation
$a^{-n+n_j} \tilde p_j^- = \tilde p_{\sigma_i}^-$.  Combined with \eqref{badrelation} this implies that $a^{n_i+n_j - 2n} = 1$.  Since by hypothesis $a$ is not a root of unity, it follows that $n_i+n_j=2n$.  

Suppose $n_i\neq n_j$, e.g. $n_i<n_j$.  Then we may write $n_i = n - k$, $n_j = n + k$ for some $k>0$.  Thus the orbit of $p_j^-$ contains that of $p_i^-$ as follows:
$$
p_j^-,\dots,f^k(p_j^-)=p_i^-,\dots,f^{n_j-k-1}(p_j^-) = p_{\sigma_i}^+,\dots,f^{n_j-1}(p_{\sigma_j}^+).
$$
Hence in the blowing up procedure used to lift the birational map $f:\cp^2\to \cp^2$ to an automorphism $\hat f:X\to X$, the orbit segment $p_i^-,\dots, p_{\sigma_i}^-$ is blown up \emph{before} the segment $p_j^-,\dots,p_{\sigma_j}^+$.  Hence despite the coincidence \eqref{badrelation}, $f$ still realizes the given orbit data.  

If instead $n_i = n_j = n$, then \eqref{badrelation} implies $p_i^- = p_j^-$.  Without loss of generality, we may assume that $p_i^-$ is infinite near to $p_j^-$.  Then symmetry of $f$ and $f^{-1}$ implies that $p_i^+$ is infinitely near to $p_j^+$, whereas $n_i=n_j$ implies that $p_{\sigma_i}^+$ is infinitely near to $p_{\sigma_j}^+$.  Hence, under our assumption that $\sigma$ is the identity or a transposition, $f$ realizes the given orbit data if and only if $\sigma = \id$.

Turning to the remaining case, where $\sigma:1\mapsto 2\mapsto 3$ is cyclic, we begin again with \eqref{badrelation}.  Without loss of generality, we further suppose that $j=1$, $i=2$.  Then \eqref{cycle} and \eqref{badrelation} give us $a^n(1+a^{n_3}+a^{n_2+n_3}) = a^{n_2}(1+a^{n_1} + a^{n_1+n_3})$.  Replacing $a$ with $a^{-1}$ in this equation also gives $a^{n_1}(1+a^{n_2}+a^{n_2+n_3}) = a^n(1+a^{n_3} + a^{n_1+n_3})$.  Adding the two equations and simplifying, we obtain that $(a^{n + n_3}-1)(a^{n_1} - a^{n_2}) = 0$.  Since $a$ is not a root of unity, we infer that either $n = -n_3$ or $n_1 = n_2$.  

In the first case, we substitute for $a^{-n_3}$ for $a^n$ in \eqref{badrelation} and find that $(a^{n_3}+1)(a^{n_1+n_2+n_3} - 1) = 0$, which is impossible because $a$ is not a root of unity and $n_1,n_2,n_3 \geq 1$.  In the second case, when $n_1 = n_2$, we rewrite \eqref{badrelation} as
$$
a^n \tilde p_1^- = a\tilde p_{\sigma_2}^+ = a \tilde p_3^+ = \tilde p_3^- + 1-a.
$$
Substituting our formulas \eqref{cycle} for $\tilde p_1^-$ and $\tilde p_3^-$, we obtain that $a^n(1+a^{n_3} + a^{n_3+n_2}) = a^{n_2}(1+a^{n_3} + a^{n_1+n_3})$.  Using $n_1=n_2$, we obtain that either $1+a^{n_3} + a^{n_1+n_3} = 0$ or (since $a$ is not a root of unity) $n = n_2$.  In the first case, we replace $a$ with $a^{-1}$ and deduce finally that $n_1 = n_3$.  In the second case, we return to \eqref{badrelation} and find that $\tilde p_1^- = \tilde p_2^-$, which again gives $n_1 = n_3$.  Regardless, we arrive at the condition $n_1=n_2=n_3$.  From here we obtain a contradiction following the same logic used to rule out the possibility that $n_i=n_j$ when $\sigma$ transposes $i$ and $j$.
\end{proof}

\section{Reducible cubics}

We now deal briefly with the cases where the cubic curve $C$ is reducible with only one singularity---i.e. $C$ consists of three distinct lines through a single point, or $C$ consists of a smooth conic and one of its tangent lines.  In either case, the components of $C_{reg}$ are copies of $\C$, and the story is much the same as it is for cuspidal cubics.  The only additional complication is that a quadratic transformation cannot realize given orbit data unless the permutation it induces on the components of $C$ is compatible with the permutation $\sigma$ in the orbit data.

\begin{thm}
\label{concurrent lines}
Let $C$ be the plane cubic consisting of three lines meeting at a single point.  Let $n_1,n_2,n_3\in\N$, $\sigma\in\Sigma_3$ be orbit data whose characteristic polynomial $P(\lambda)$ has a root outside the unit circle. Then the orbit data can be realized by a quadratic transformation $f$ that properly fixes $C$ if and only if one of the following is true:
\begin{itemize}
 \item $\sigma=\id$;
 \item $\sigma$ is cyclic and either all $n_j\equiv 0 \operatorname{mod} 3$ or all 
       $n_j\equiv 2\operatorname{mod} 3$;
 \item $\sigma$ is a transposition (say $\sigma$ interchanges $1$ and $2$) and either $n_1$ and $n_2$ are odd, or
       no two $n_j$ are the same $\operatorname{mod} 3$ and $n_3\equiv 0\operatorname{mod} 3$.
\end{itemize}
If one of these holds, we can arrange that $f|_{C_{reg}}$ has multiplier $a$ where $a$ is any root of $P$ that
is not a root of unity.  The choice of $a$ determines $f$ uniquely up to linear conjugacy.
\end{thm}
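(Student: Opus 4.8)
The plan is to isolate the one new feature of this curve — the way $f$ is forced to permute the three lines composing $C$ — and then to run the cuspidal-cubic argument (Theorems~\ref{tentative} and~\ref{actual}) underneath it. Write $V_1,V_2,V_3$ for the three lines of $C$ and $p$ for their common point. Since the singularity of $C$ is an ordinary triple point and not a node, $\pico(C)\cong\C$ and every $a\in\C^*$ is an admissible multiplier, so the identification $\kappa$ of each $V_m\cap C_{reg}$ with $\C$ may be chosen so that Theorem~\ref{group law} holds, exactly as for the cusp cubic. Any quadratic transformation $f$ properly fixing $C$ fixes $p$ and carries each line $V_m$ to a line through $p$, hence induces a permutation $\pi\in\Sigma_3$ of $\{V_1,V_2,V_3\}$; by condition~\ref{1st} of Theorem~\ref{quadauts} each $V_m$ contains exactly one point of $I(f)$ and one of $I(f^{-1})$, and condition~\ref{2nd} records precisely which line $p_j^-$ lies on in terms of $\pi$ and the line of $p_j^+$.

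I would prove necessity first. Given a realization $f$, label $I(f)$ so that $p_j^+\in V_{c(j)}$ for a bijection $c$ of $\{1,2,3\}$. Recording which line each term of the orbit $p_j^-,f(p_j^-),\dots,f^{\,n_j-1}(p_j^-)=p_{\sigma_j}^+$ lies on and feeding in condition~\ref{2nd} yields, for $j=1,2,3$, a congruence of the form $\pi^{\,n_j-1}(\,\cdot\,)=c(\sigma_j)$; after relabelling the $p_j^+$ (which only replaces $\pi$ by $c^{-1}\pi c$) we may assume $c=\id$, and the condition becomes the clean requirement that some $\pi\in\Sigma_3$ satisfy $\pi^{\,n_j-1}(j)=\sigma(j)$ for all $j$. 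Enumerating the six elements of $\Sigma_3$ then reproduces exactly the trichotomy in the statement: $\sigma=\id$ is achieved by $\pi=\id$ with no restriction; a transposition $\sigma$ requires $\pi$ to be either the same transposition, forcing the two displaced $n_j$ odd, or a $3$-cycle, forcing the $n_j$ pairwise distinct mod $3$ with $n_3\equiv 0$; and a $3$-cycle $\sigma$ requires $\pi$ to be a $3$-cycle, which forces all three $n_j$ into one of the two allowed residue classes mod $3$.

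For the converse and the uniqueness, fix a $\pi$ produced above and a root $a$ of $P$ that is not a root of unity (one exists because $P$ has a root off the unit circle). The construction of $f$ is again the recipe in Theorem~\ref{quadauts}: it remains to choose base points $p_j^+\in V_{c(j)}$ and translations $b_{V_1},b_{V_2},b_{V_3}\in\C$ so that the divisor condition~\ref{3rd} and the formula~\ref{4th} for $p_j^-$ hold and so that $(f|_{C_{reg}})^{\,n_j-1}(p_j^-)=p_{\sigma_j}^+$. Because $f|_{C_{reg}}$ iterates as the affine map $z\mapsto az+b_{(\cdot)}$ on $\C$, cycling through the components along $\pi$, these requirements are linear in the $p_j^+$ with coefficients polynomial in $a$ and $a^{-1}$; after normalizing the translations using the conjugation freedom of Proposition~\ref{fixingauts} (the normal form depending on whether $\pi$ is trivial, a transposition, or a $3$-cycle) one is left with a system whose solvability condition collapses to exactly $P(a)=0$ — the same fortuitous coincidence exploited in Theorem~\ref{tentative}, which can be extracted from the Bedford--Kim formula~\eqref{charpoly}. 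Since $a$ is not a root of unity the solution is unique given the normalization, so $f$ is determined up to a linear transformation fixing $C$. The final point is to check that this $f$ genuinely realizes the orbit data and not merely tentatively, i.e.\ that no orbit $p_j^-,f(p_j^-),\dots$ meets $I(f)$ too early or at the wrong point; here $\pi$ does most of the work, since points lying on different lines of $C$ can never coincide, and the remaining coincidences are ruled out by the non-collision analysis of Theorem~\ref{actual}, which goes through verbatim once $a$ is known not to be a root of unity.

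The step I expect to be the real obstacle is this last block: the computation inside $\C\cong\pico(C)$ showing the base-point/translation system is solvable precisely when $P(a)=0$, together with the honesty check. The complication relative to the cuspidal cubic is that the three translations $b_{V_m}$ are genuinely distinct and are cyclically shuffled with each application of $f$, so the arithmetic has to be organized around the $\pi$-orbits of the components; the compensating simplification is that this same structure disposes of most of the potential orbit collisions automatically. As with Theorem~\ref{mainthm4}, the conclusion comes with no count of how many realizations there are, only with the assertion that one exists for each admissible $a$, unique up to linear conjugacy.
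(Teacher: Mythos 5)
Your overall route is the one the paper takes: reduce the question to a compatibility condition between the permutation of the three lines induced by $f$ and the data $(n_1,n_2,n_3,\sigma)$, rerun the $\pico(C)$-level computation of Theorem \ref{tentative} (after normalizing with Proposition \ref{fixingauts}) to get existence and uniqueness for each root $a$ of $P$ that is not a root of unity, and note that the collision problems of Theorem \ref{actual} disappear because the three indeterminacy points lie on distinct components. That strategy is sound and is exactly the paper's sketch.

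The genuine gap is in the combinatorial bookkeeping that is supposed to produce the trichotomy. With $p_j^+\in V_{c(j)}$ and $f(V_m)=V_{s(m)}$, condition \ref{2nd} of Theorem \ref{quadauts} places $p_j^-$ on $\tau(V_{c(j)})=V_{s(c(j))}$: one application of the component permutation is already consumed before the orbit starts. Hence $f^{n_j-1}(p_j^-)$ lies on $V_{s^{n_j}(c(j))}$, and the requirement that it equal $p_{\sigma_j}^+\in V_{c(\sigma_j)}$ reads $\pi^{n_j}(j)=\sigma(j)$ with $\pi=c^{-1}sc$, not $\pi^{n_j-1}(j)=\sigma(j)$ as you wrote. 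Your enumeration is then not a deduction from either version of the congruence: from the congruence you wrote, the transposition case comes out as ``$n_1,n_2$ even'' (and, when $\pi$ is a $3$-cycle, $n_3\equiv 1\operatorname{mod}3$), not ``$n_1,n_2$ odd'' and ``$n_3\equiv 0$''; from the corrected congruence the transposition clause does come out exactly as in the statement, but the cyclic clause comes out as all $n_j\equiv 1$ or all $n_j\equiv 2\operatorname{mod}3$ rather than the residues $0$ or $2$. So the claim that the enumeration ``reproduces exactly the trichotomy in the statement'' is unsupported as written; the delicate matching (choosing $s=\sigma$ versus $s=\sigma^{-1}$ in the cyclic case, a transposition versus a $3$-cycle when $\sigma$ is a transposition, and tracking on which line each orbit begins and ends) is precisely what the proof must carry out explicitly, and until the off-by-one is resolved consistently across all three cases your necessity argument does not establish the stated residue classes. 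The sufficiency half is only asserted (``the solvability condition collapses to $P(a)=0$''), but since it amounts to the $\sigma$-twisted analogue of \eqref{id}--\eqref{cycle} after normalizing the three fixed points via Proposition \ref{fixingauts}, I would accept that at the same level of detail as the paper's own sketch; it is the component bookkeeping that has to be redone.
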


\begin{proof}
We only sketch the argument.  Let $V_j\subset C_{reg}$ denote the component containing $p_j^+$. Since $a\neq 1$, the restriction $f|_{V_j}$ has a unique `fixed point' $p_j\sim f(p_j)$.  Using Proposition \ref{fixingauts} we may conjugate by an element of $\Aut(\cp^2)$ to arrange that $z_j = \frac{1}{3(a-1)}$ for all $j=1,2,3$.  
Hence $f(p)\sim a(p-p_j) + p_j$ has the same expression on each $V_j$.

Given orbit data whose characteristic polynomial $P$ has a root $a$ that is not a root of unity, we can repeat the arguments used to prove Theorem \ref{tentative} to prove that there exists a quadratic transformation $f$ properly fixing $C$ such that the multiplier of $f|_{C_{reg}}$ is $a$ and $f^{n_j-1}(p_j^-)\sim p_j^+$ for each $j=1,2,3$.  Indeed given $a$ and the fixed points $p_j$, $f$ is determined up to permutation of the $V_j$.  Let us write $f(V_j) = V_{s_j}$ where $s\in\Sigma_3$.

Now each $V_j$ contains one point of indeterminacy---say $p_j^+\in V_j$; and $p_j^-$ therefore lies in $f(V_j) = V_{s_j}$.  Therefore if $\sigma=\id$, we also choose $s=\id$, and then $f^{n_j-1}(p_j^-) \sim p_j^+$
implies $f^{n_j-1}(p_j^-) = p_j^+$.  Hence $f$ realizes the given orbit data.

If $\sigma$ is cyclic (say $\sigma:1\mapsto 2\mapsto 3$), then certainly $f$ must permute the $V_j$ transitively.  That is, $s$ must also be cyclic.  If $s = \sigma$, then we have $p_j^- \in V_{\sigma_j}$.  Hence
$f^{n_j-1}(p_j^-)$ lies in $V_j$ if and only if $n\equiv 0\mod 3$.  That is, when $s=\sigma$ then $f$ realizes the given orbit data if and only if each $n_j\equiv 0\mod 3$.  To realize orbit data for which $n_j\equiv 2\mod 3$, 
one may check that it is similarly necessary and sufficient that $s=\sigma^{-1}$.  We note that the exceptional cases from Theorem \ref{actual} need not concern us here, because different points of indeterminacy lie in different components of $C_{reg}$ and cannot therefore coincide.

The case where $\sigma$ is a transposition can be analyzed similarly.  The case where $n_1$ and $n_2$ are odd can be realized by a quadratic transformation $f$ that swaps $V_1$ and $V_2$ while fixing $V_3$.  The other case can be achieved by letting $f$ permute the $V_j$ cyclically.
\end{proof}

When $C$ is the union of a smooth conic with one of its tangent lines, one has a result similar to Theorem \ref{concurrent lines}.  However, in this situation it will always be the case that the conic portion of $C$ contains more than one point of indeterminacy.  Since such points of indeterminacy might coincide, it is necessary to hypothesize away exceptional cases like those in Theorem \ref{actual}.  The upshot is that the analogue of Theorem \ref{concurrent lines} for $C$ equal to a conic and a tangent line is somewhat messy to state.  Since it is not conceptually different, we omit it.

\subsection{Reducible cubics with nodal singularities.}

Finally, we consider reducible cubics with more than one singularity.  As above, we devote more attention to the case of a cubic with three irreducible components.

\begin{thm}
\label{3linenogos}
Suppose $f:\cp^2\to\cp^2$ is a quadratic transformation that properly fixes $C=\{xyz=0\}$ and lifts to an automorphism with positive entropy on some blowup of $\cp^2$.  Then $f$ fixes $C_{reg}$ component-wise and $f|_{C_{reg}}$ has multiplier $1$.  Hence $f$ realizes orbit data of the form $n_1,n_2,n_3\geq 1$, $\sigma = \id$.
\end{thm}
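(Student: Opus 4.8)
The plan is to show that, writing $a\in\{1,-1\}$ for the multiplier of $f|_{C_{reg}}$ (forced, since $C$ has three nodes, so $\pico(C)\cong\C^*$) and $s\in\Sigma_3$ for the permutation of the three lines $V_1,V_2,V_3$ induced by $f$ (note $f$ also permutes the three nodes, compatibly with $s$), the pair $(a,s)$ must be $(1,\id)$. By item \ref{1st} of Theorem \ref{quadauts} each $V_j$ meets $I(f)$ in one point and $I(f^{-1})$ in one point, so after relabeling $p_j^+\in V_j$ and hence $p_j^-\in f(V_j)=V_{s(j)}$; tracking the component in which $f^k(p_j^-)$ lies shows that any orbit data realized by $f$ satisfies $\sigma_j=s^{n_j}(j)$, so once $a=1$ and $s=\id$ are established, $\sigma=\id$ follows. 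Two of the five possibilities $(a,s)\neq(1,\id)$ are immediate: if $a=-1$ and $s=\id$ then Corollary \ref{minus1} forces $f$ to be linear, which is impossible; and if $a=1$ with $s$ a $3$-cycle then $f$ permutes the components of $C$ transitively with multiplier $1$, so Theorem \ref{plus1} supplies an invariant elliptic fibration for the lift, hence zero entropy.

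The work is in the remaining cases: $s$ a transposition (with $a=\pm1$) and $(a,s)=(-1,\,3\text{-cycle})$. For these I would argue arithmetically. First translate items \ref{3rd} and \ref{4th} of Theorem \ref{quadauts} into identities in $\pico(C)$ relating $\kappa(p_1^\pm),\kappa(p_2^\pm),\kappa(p_3^\pm)$ and the translations $b_{V_1},b_{V_2},b_{V_3}$, normalizing some $b_{V_j}$ by conjugating $f$ by a linear transformation fixing $C$ (Proposition \ref{fixingauts}). Then write the realization equations $f^{n_j-1}(p_j^-)=p_{\sigma_j}^+$ in the same group: when $s$ is a transposition, the line fixed by $f$ contributes an equation of exactly the shape met for the cuspidal cubic in the proof of Theorem \ref{tentative}, while the two interchanged lines are governed by $f^2$; when $s$ is a $3$-cycle with $a=-1$, record first that $f^3$ fixes each line with multiplier $-1$, so $f^6$ restricts to the identity on $C$. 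Eliminating the $\kappa(p_j^\pm)$ between the group-law identities and the realization equations should leave, for each $j$, a relation of the form $m\cdot b_{V_j}\sim 0$ with $m$ an explicit integer polynomial in $n_1,n_2,n_3$ — so $b_{V_j}$ is a root of unity in $\C^*$ of controlled order — except when $(n_1,n_2,n_3)$ makes the relevant $m$ vanish, which happens only for finitely many triples.

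In the non-exceptional case one concludes that some iterate $f^M$ fixes $C$ pointwise; the $f$-orbit of each $p_j^-$ is then periodic, and by minimality of the $n_j$ (the orbit may not meet $I(f)$ before step $n_j-1$) its period bounds $n_j$. Feeding in the constraints $\sigma_j=s^{n_j}(j)$, which pin down the residue of each $n_j$ modulo $\operatorname{ord}(s)\in\{2,3\}$, one checks that necessarily $n_1+n_2+n_3\le 9$; then $X$ is obtained by at most nine blowups of $\cp^2$, so Proposition \ref{toofew} contradicts positive entropy. For each of the finitely many exceptional triples one instead uses the Bedford--Kim formula \eqref{charpoly} to verify directly that every root of the characteristic polynomial lies on the unit circle (or that the orbit data is not realizable at all), again a contradiction. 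This exhausts the five cases, leaving $(a,s)=(1,\id)$ and $\sigma=\id$.

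The main obstacle is precisely the bookkeeping in the last two paragraphs: extracting the correct divisibility relations from the interlocking group-law and orbit identities, and then exploiting the interplay between those divisibilities and the residues of the $n_j$ modulo $\operatorname{ord}(s)$ to push $n_1+n_2+n_3$ down to at most $9$ in the generic subcase. Dispatching the sporadic orbit data is finite but not purely mechanical; this is the ``extended exercise in arithmetic mod $1$'' carried out in full in the companion result on this curve.
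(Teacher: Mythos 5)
Your reduction to the five pairs $(a,s)\neq(1,\id)$ and your dispatch of $(-1,\id)$ via Corollary \ref{minus1} and of $(1,\text{3-cycle})$ via Theorem \ref{plus1} are fine, and your observation $\sigma_j=s^{n_j}(j)$ is correct. But the proof has a genuine gap exactly where you place ``the work'': you are missing the key elementary observation that for $C=\{xyz=0\}$ the permutation $s$ of the three lines \emph{determines} the multiplier, because $f$ must permute the three nodes compatibly. If $s$ swaps two lines and fixes the third, then on the fixed line the two nodes (the two ends $0,\infty$ of that copy of $\C^*$) are interchanged, so the multiplier is $-1$; if $s=\id$ or $s$ is a 3-cycle, the (third power of the) map fixes each line together with both of its nodes, forcing multiplier $+1$. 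Hence your cases ($s$ a transposition with $a=+1$) and ($a=-1$ with $s$ a 3-cycle) are vacuous, and no arithmetic is needed for them. In the one remaining case, $a=-1$ with $s$ a transposition, the paper's argument is short: the fixed line $V$ contains exactly one point of $I(f)$ and one of $I(f^{-1})$ (Theorem \ref{quadauts}, item \ref{1st}), $f^2|_V=\id$ forces $n_1\in\{1,2\}$ with $\sigma_1=1$, the case $n_1=2$ is excluded because it forces $\sum p_j^+\sim 0$, i.e.\ collinear indeterminacy points, and $n_1=1$ kills positive entropy by Proposition \ref{olength1}.

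By contrast, your proposed treatment of these cases is only a sketch and would not go through as written. The formulas you want to borrow from Theorem \ref{tentative} are derived under the hypothesis that $a$ is not a root of unity (denominators $1-a^{n_j}$), so with $a=\pm1$ they degenerate and the elimination has to be redone from scratch ``mod $1$,'' as the paper does separately in Theorem \ref{actual3} for a different purpose. The assertions that elimination yields $m\cdot b_{V_j}\sim 0$ with $m$ vanishing for only finitely many triples, that the generic subcase forces $n_1+n_2+n_3\le 9$, and that the sporadic triples can all be excluded via \eqref{charpoly}, are all unproved claims carrying the entire burden of the theorem; none of them is evidently true, and the finiteness claim in particular is not justified (an integer polynomial in $(n_1,n_2,n_3)$ can vanish on infinitely many triples). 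So the proposal establishes the easy cases by the same tools as the paper but leaves the substantive cases unproven, where a one-line geometric fact about the nodes would have eliminated most of them.
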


\begin{proof}
Since $\pico(C) \cong \C^*$, the multiplier of $f|_{C_{reg}}$ is $\pm 1$.  We claim that the multiplier of $f$ is $-1$ if and only if $f$ swaps two components of $C_{reg}$ and preserves the other.  Indeed, if $f$ fixes $\{z=0\}$ while swapping $\{x=0\}$ and $\{y=0\}$, then in particular, $f$ interchanges the points $[0,1,0]$ and $[1,0,0]$.  Hence the multiplier of $f|_{C_{reg}}$, which is the same as that of $f|_{\{z=0\}}$, is $-1$.  Similarly, if $f$ fixes all three components of $C_{reg}$, then it also fixes all three singularities of $C$, and we infer that $f$ has multiplier $+1$.  Finally, if $f$ cycles the components of $C_{reg}$, then $f^3$ fixes $C_{reg}$ component-wise, and we infer again that the multiplier of $f|_{C_{reg}}$, which is the same as that of $f^3|_{C_{reg}}$, is $+1$.  This proves our claim.

Suppose now that the multiplier is $-1$ and, without loss of generality, that $f$ fixes the component $V\subset C_{reg}$ containing $p_1^\pm$. Hence $f^2|_V = \id$ and $\sigma_1 = 1$.  It follows that $n_1 = 1$ or $n_1 = 2$.  If $n_1=2$, then on the one hand, we have $p_1^+ \sim -p_1^+ + b_1$, where $b_1$ is the translation for $f|_V$.  And on the other hand, we have from Theorem \ref{quadauts} that $p_1^- \sim - p_1^+ - b_2-b_3$ where $b_2,b_3\in\C^*$ are the translations on the other two components of $C_{reg}$.  We infer that $b_1+b_2+b_3 = 0$, and by item \ref{3rd} in Theorem \ref{quadauts} that $\sum p_j^+ \sim 0$.  This contradicts the fact that the points in $I(f)$ cannot be collinear.  So $n_1 = 1$.  From Proposition \ref{olength1}, it follows that the automorphism induced by $f$ has entropy $0$, contrary to hypothesis.

Hence the multiplier for $f|_{C_{reg}}$ is $+1$.  If $f$ permutes the components of $C_{reg}$ cyclically, then Proposition \ref{toofew} and Theorem \ref{plus1} imply that $f$ lifts to an automorphism with zero entropy, again counter to our hypothesis. We conclude that $f$ fixes $C$ component-wise.
\end{proof}

Having just ruled out many types of orbit data on $C=\{xyz=0\}$, we consider whether the remaining cases may be realized.  Let $n_1,n_2,n_3\geq 1$, $\sigma = \id$ be orbit data and $f$ be a quadratic transformation that fixes $C$ component-wise with multiplier $1$.  Then we have $f(p)\sim p + b_j$, on the component containing $p_j^\pm$.  Theorem \ref{quadauts} gives $p_j^- \sim p_j^+ + b_j - b$ where $b = b_1 + b_2 + b_3$; and $f$ tentatively realizes the given orbit data if $p_j^+ \sim p_j^- + (n_j -1)b_j$.  We infer $n_j b_j = b$ for $j=1,2,3$.

Note that these equations hold relative to the group structure on $\pico(C) \cong (\C/\Z,+)$.  For convenience we will confuse equivalence classes with their representatives here, regarding $b$, $b_j$ as elements of $\C$ instead of $\C/\Z$.  The previous equations must then be we must understood `mod 1': e.g. $n_j b_j = b+m_j$ for some $m_j\in\Z$.  Solving for $b_j$ and summing over $j$ gives 
$$
b\left(1-\sum\frac{1}{n_j}\right) = \sum \frac{m_j}{n_j},
$$
which implies 
\begin{equation}
\label{mtob}
b_j = \frac{m_j}{n_j} + \frac{1}{n_j}\frac{m_1n_2n_3 + m_2 n_3 n_1 + m_3 n_1 n_2}{n_1n_2n_3 - n_1n_2 - n_2n_3-n_3n_1}.
\end{equation}
On the other hand, it is not difficult to see from Theorem \ref{quadauts} that if $m_1,m_2,m_3\in\Z$ is any choice of integers, then we get a tentative realization of our orbit data.

\begin{prop} 
\label{tentative3}
Let $C=\{xyz=0\}$ and $n_1,n_2,n_3,\sigma = \id$ be orbit data.  Then this data may be tentatively realized by a quadratic transformation $f$ properly fixing $C$ if and only if $n_1n_2n_3 \neq n_1 n_2 + n_2 n_3 + n_3 n_1$.  Any such $f$ has translations $b_j$, $j=1,2,3$ given by equation \eqref{mtob}.  Conversely, any choice of $m_1,m_2,m_3\in\Z$ in \eqref{mtob} determines a tentative realization $f$ that is unique up to linear conjugacy.
\end{prop}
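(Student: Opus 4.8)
The plan is to convert the problem into linear algebra for the translations $b_1,b_2,b_3$ and then push the answer back through Theorem \ref{quadauts}. As in the discussion immediately preceding the statement, a quadratic transformation $f$ properly fixing $C=\{xyz=0\}$, fixing each irreducible component and having multiplier $1$ on $C_{reg}$, tentatively realizes the orbit data $n_1,n_2,n_3,\id$ exactly when the translations $b_j$ (the one on the component containing $p_j^\pm$) satisfy
$$
n_j b_j \sim b \quad (j=1,2,3), \qquad b:=b_1+b_2+b_3 ;
$$
the displayed relations come from combining conditions \ref{2nd} and \ref{4th} of Theorem \ref{quadauts}, which give $p_j^-\sim p_j^++b_j-b$, with the requirement $(f|_{C_{reg}})^{n_j-1}(p_j^-)\sim p_j^+$, while condition \ref{3rd} of Theorem \ref{quadauts} in addition forces $b\not\sim 0$. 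So the first step is just to record that tentative realizability of this kind is equivalent to solvability of the displayed system in $\pico(C)\cong\C/\Z$ together with $b\not\sim 0$, the reverse implication being handed to us by Theorem \ref{quadauts} (see the third paragraph).

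The second step is to solve the system. Passing to representatives in $\C$, the relations read $n_jb_j=b+m_j$, $m_j\in\Z$, together with $b=b_1+b_2+b_3$; eliminating, this is the integer linear system $M\vec b=\vec m$, where $M$ has diagonal entries $n_j-1$ and all off-diagonal entries $-1$. A quick cofactor expansion gives $\det M=n_1n_2n_3-n_1n_2-n_2n_3-n_3n_1$. When $\det M\neq 0$, each $\vec m\in\Z^3$ yields a unique real solution $\vec b=M^{-1}\vec m$, which on being written out is exactly formula \eqref{mtob}, with $b=(m_1n_2n_3+m_2n_1n_3+m_3n_1n_2)/\det M$; moreover one can choose $\vec m$ so that $b\not\sim 0$ in $\C/\Z$. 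When $\det M=0$ I would argue instead that no admissible solution exists: after clearing denominators the coefficient of $b$ in the summed relations vanishes, and a short arithmetic computation then forces $b$ into a subgroup of $\C/\Z$ on which it is trivial, so condition \ref{3rd} can never be met. This is the place where I expect to have to be most careful.

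The third step turns a good solution back into a transformation. Given the automorphism $\tau$ of $C$ of multiplier $1$ with translations $b_j$, choose points $p_j^+$ on the three lines of $C$ — one per line, in $C_{reg}$ — with $\sum p_j^+\sim b$ (possible precisely because $b\not\sim 0$), so that conditions \ref{1st} and \ref{3rd} of Theorem \ref{quadauts} hold; that theorem then produces a unique quadratic transformation $f$ properly fixing $C$ with $f|_C=\tau$ and $I(f)=\{p_1^+,p_2^+,p_3^+\}$, and condition \ref{4th} prescribes the $p_j^-$. A direct check finishes the existence half: generically $p_j^-$ lies on the same line as $p_j^+$ by condition \ref{2nd}, $f$ acts on that line by translation by $b_j$, and hence $(f|_{C_{reg}})^{n_j-1}(p_j^-)\sim p_j^-+(n_j-1)b_j=p_j^+$. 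Uniqueness up to linear conjugacy is Proposition \ref{fixingauts}: two such transformations with the same $b_j$ differ by post-composition with a linear automorphism of $\cp^2$ fixing $C$ component-wise, and the leftover freedom in $\sum p_j^+$ is absorbed the same way; the same remark shows that different $\vec m$ giving equal $b_j$ modulo $1$ yield conjugate $f$.

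To summarize the difficulty: the determinant computation, the inversion that produces \eqref{mtob}, and the two appeals to Theorem \ref{quadauts} and Proposition \ref{fixingauts} are all routine. The genuine content is the bookkeeping in $\pico(C)\cong\C/\Z$ — in particular verifying that the non-degeneracy requirement $b\not\sim 0$ coming from Theorem \ref{quadauts}(\ref{3rd}) is exactly what forces $n_1n_2n_3-n_1n_2-n_2n_3-n_3n_1$ to be nonzero, so that the criterion in the statement is both necessary and sufficient.
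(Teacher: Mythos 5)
Your overall route is the same as the paper's: extract the constraints $n_jb_j\sim b$ (and $b\not\sim 0$) from Theorem \ref{quadauts}, solve the resulting system mod $1$ to get \eqref{mtob}, then get existence from the existence half of Theorem \ref{quadauts} (choosing one $p_j^+$ per line with $\sum p_j^+\sim b$) and uniqueness up to conjugacy from Proposition \ref{fixingauts}; the determinant computation and the inversion are exactly the paper's substitution argument in matrix form. The trouble is that the two places where you go beyond the paper are precisely the places where your steps fail. First, your treatment of the degenerate case: when $n_1n_2n_3=n_1n_2+n_2n_3+n_3n_1$, i.e. $\sum 1/n_j=1$ (so $\{n_j\}$ is $(3,3,3)$, $(2,4,4)$ or $(2,3,6)$), the summed relation reads $0\cdot b=\sum m_j/n_j$, which is satisfied with $m_1=m_2=m_3=0$, and then $b$ is a \emph{free} parameter rather than being forced into a trivial subgroup. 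For instance, for $(3,3,3)$ take $b_1=b_2=b_3=t$ with $3t\not\sim 0$: then $n_jb_j=3t=b\not\sim 0$, Theorem \ref{quadauts} produces $f$, and $f^{2}(p_j^-)=p_j^+$ on each line, so the data is tentatively realized. So the argument you sketch for the ``only if'' direction cannot work as described; the system becomes underdetermined, not inconsistent.

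Second, your claim that when the determinant is nonzero one can choose $\vec m$ with $b\not\sim 0$ is also false in general: for $n_1=n_2=n_3=4$, formula \eqref{mtob} gives $b=m_1+m_2+m_3\in\Z$ for every choice of $\vec m$ (similarly $b=12m_1+8m_2+3m_3\in\Z$ for $(2,3,8)$), so the nondegeneracy required by condition \ref{3rd} of Theorem \ref{quadauts} can never be met there either. To be fair, you have put your finger on a genuine imprecision in the proposition itself: the paper's own proof simply divides by the determinant without comment and never checks $b\not\sim 0$ (its final assertion ``any choice of $m_1,m_2,m_3$'' already fails for $m_1=m_2=m_3=0$), and its later use in Theorem \ref{actual3} only relies on the nondegenerate constraints. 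But as written, your proposed patches for both the $\det M=0$ case and the choice of $\vec m$ do not go through, so the ``if and only if'' as you set it up is not established.
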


\begin{proof}
The above discussion shows that the restrictions on $f$ are necessary and sufficient for $f$ to tentatively realize the orbit data.  We need only argue that there actually exists a quadratic transformation $f$ that satisfies the restrictions.  For this we rely on the existence portion of Theorem \ref{quadauts}.  Note that the above discussion also shows that while the conditions $f^{n_j-1}(p_j^-) = p_j^+$ constrain the translations $b_j$, they do not (otherwise) constrain the points $p_j^\pm$.  Hence we need only adhere to the conditions \ref{1st} and \ref{3rd} in Theorem \ref{quadauts}, choosing $p_j^+$ so that
$\sum p_j^+ \sim b$ and then $p_j^- \sim p_j^+ + b_j - b$.  From Proposition \ref{fixingauts}, we see in fact that we can always conjugate by a linear transformation to obtain $p_1^+ \sim p_2^+ \sim 0$ and $p_3^+ \sim b$.
\end{proof}

Since the points of indeterminacy for $f$ lie in different components of $C$, the only way the transformations $f$ in the proposition can fail to realize the given orbit data is if $f^k(p_j^-) = p_j^+$ for some $0\leq k\leq n_j-2$.  This happens if and only if $f^\ell p_j^- = p_j^-$, i.e.  $\ell b_j \in\Z$, for some $0< \ell <n_j-2$.

\begin{thm}
\label{actual3}
Let $C=\{xyz=0\}$ and consider orbit data of the form $n_1\geq n_2 \geq n_3\geq 2$, $\sigma = \id$ for which the corresponding characteristic polynomial has a root outside the unit circle.  Then there exists a quadratic transformation properly fixing $C$ and realizing this orbit data if and only if we are not in one of the following cases.
\begin{itemize}
 \item $n_2+n_3 \leq 6$; 
 \item $n_3 =2$, and $n_1=n_2= 5$ or $n_1=n_2=6$;
 \item $n_1=n_2=n_3 = 4$.
\end{itemize}
\end{thm}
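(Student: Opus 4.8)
The plan is to translate the statement into arithmetic modulo~$1$ via Proposition~\ref{tentative3} and then settle the two implications separately --- the ``only if'' by a finite obstruction computation, the ``if'' by an explicit construction.

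First, the reduction. By Theorem~\ref{3linenogos} any quadratic transformation that properly fixes $C=\{xyz=0\}$ and lifts to a positive-entropy automorphism must fix $C_{reg}$ component-wise with multiplier~$1$ and realize orbit data with $\sigma=\id$, so we are in the setting of Proposition~\ref{tentative3}. Evaluating \eqref{charpoly} for $\sigma=\id$ shows that the characteristic polynomial has a root off the unit circle exactly when $n_1n_2n_3>n_1n_2+n_2n_3+n_3n_1$ (equivalently $\sum 1/n_j<1$); in particular the hypothesis forces $n_1n_2n_3\neq n_1n_2+n_2n_3+n_3n_1$, so Proposition~\ref{tentative3} applies and the tentative realizations are parametrized by $(m_1,m_2,m_3)\in\Z^3$ via \eqref{mtob}. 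Set $D:=n_1n_2n_3-n_1n_2-n_2n_3-n_3n_1>0$, so $b_j\equiv(m_jD+m_1n_2n_3+m_2n_3n_1+m_3n_1n_2)/(n_jD)\pmod 1$. Since the points $p_1^\pm,p_2^\pm,p_3^\pm$ lie on distinct components of $C_{reg}$, the discussion following Proposition~\ref{tentative3} shows that a tentative realization is an honest realization precisely when $b_j$ has order at least $n_j$ in $\pico(C)$ for each $j$. As $(m_1,m_2,m_3)$ varies over $\Z^3$ the resulting triple $(b_1,b_2,b_3)\bmod\Z^3$ takes only finitely many values, so there are only finitely many candidate transformations; the theorem amounts to deciding for which orbit data one of them has all three orders large enough.

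For the ``only if'' direction I would argue family by family, which is a finite task. Eliminating variables in the system $n_1b_1\equiv n_2b_2\equiv n_3b_3\equiv b_1+b_2+b_3\pmod 1$ produces, for the families $(n_1,3,2)$, $(n_1,4,2)$ and $(n_1,3,3)$ respectively, the relations $(n_1-6)b_1\equiv 0$, $(n_1-4)b_1\equiv 0$ and $(n_1-3)b_1\equiv 0$; since $\sum 1/n_j<1$ makes each of $n_1-6,\,n_1-4,\,n_1-3$ a positive integer less than $n_1$, the order of $b_1$ is forced below $n_1$ and no realization exists. For $(5,5,2)$ one reads straight off \eqref{mtob} that $b_3\equiv 0$ for every choice of the $m_i$, and the cases $(6,6,2)$ and $(4,4,4)$ fall to a slightly longer but still elementary computation --- for $(4,4,4)$, say, \eqref{mtob} gives $b_j=(m_j+m_1+m_2+m_3)/4$, whose three numerators cannot all be odd, so some $b_j$ has order at most~$2$.

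The ``if'' direction is the substantive part and the main obstacle. For every admissible orbit data $n_1\ge n_2\ge n_3\ge 2$ with $\sum 1/n_j<1$ that avoids the excluded families I must produce $(m_1,m_2,m_3)$ with $\operatorname{ord}(b_j)\ge n_j$ for all $j$. The organizing idea is to give the common value $b:=b_1+b_2+b_3=M/D$, where $M=m_1n_2n_3+m_2n_3n_1+m_3n_1n_2$, the largest possible order: $M$ runs exactly over $g\Z$ with $g=\gcd(n_2n_3,n_3n_1,n_1n_2)$, so taking $M/g$ coprime to $D$ makes $b$ have order $D/\gcd(D,g)$, and one then uses the residual freedom of each $m_j$ modulo $n_j$ to raise each $b_j=(b+m_j)/n_j$ to order $\ge n_j$. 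When $D$ has a prime factor exceeding $n_1$ this is immediate: one works modulo that prime, where the system is linear over a field, the solution is $b_j=b/n_j$, and every $b_j$ automatically has full order. The difficulty lies near the edge of the excluded region, where $D$ is small and composed only of small primes, so that extra common factors get pinned into the numerators of the $b_j$ and a careful prime-by-prime accounting of greatest common divisors is needed; organizing this by the value of $n_3$, and within $n_3\in\{2,3\}$ by the residues of $n_1$ and $n_2$, and treating the infinite families uniformly, is exactly the ``extended exercise in arithmetic mod~$1$''. Finally, once suitable $b_j$ are in hand, the existence part of Theorem~\ref{quadauts} (through Proposition~\ref{tentative3}) produces an actual quadratic transformation realizing the data, and positive entropy of the lift is automatic from the standing hypothesis on the characteristic polynomial.
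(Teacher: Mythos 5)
Your setup is sound and matches the paper's strategy: reduce via Theorem \ref{3linenogos} and Proposition \ref{tentative3} to the arithmetic of the translations $b_j$ given by \eqref{mtob}, observe that (because the three indeterminacy points lie on distinct components) a tentative realization is an honest one exactly when each $b_j$ has order at least $n_j$ in $\pico(C)$, and then rule out the listed exceptional data by explicit congruences. Your ``only if'' computations are correct: the relations $(n_1-6)b_1\equiv 0$, $(n_1-4)b_1\equiv 0$, $(n_1-3)b_1\equiv 0$ for the families $(n_1,3,2)$, $(n_1,4,2)$, $(n_1,3,3)$, the identity $b_3\in\Z$ for $(5,5,2)$, and the parity argument for $(4,4,4)$ all check out, and this is essentially what the paper does (it, too, dispatches $(6,6,2)$ with a ``similar argument'').

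The genuine gap is the ``if'' direction, which you yourself flag as ``the substantive part'' and then do not carry out. The theorem asserts existence of a realization for \emph{every} admissible triple $n_1\geq n_2\geq n_3\geq 2$ outside the exceptional list, and your proposal only offers a strategy: make $b=M/D$ of maximal order by choosing $M/g$ coprime to $D$, note the easy case when $D$ has a prime factor exceeding $n_1$, and then defer the remaining cases to ``a careful prime-by-prime accounting of greatest common divisors.'' Those remaining cases are precisely where the theorem lives: when $D$ is small and smooth (e.g.\ $n_3=2$ with $n_1=n_2\geq 7$, the $n_3=3$ families, $n_2=n_3=4$), and it is not evident that your gcd bookkeeping closes them; indeed the paper's own treatment of $n_3=2$, $n_1=n_2\geq 7$ is already delicate, with $(n_1-2)b_1$ landing between $1$ and $2$ so that one must argue non-integrality rather than containment in $(0,1)$. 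The paper instead exhibits, case by case, concrete small choices such as $(m_1,m_2,m_3)=(1,0,0)$, $(0,1,0)$, $(1,0,-1)$, $(1,-1,0)$ and verifies the order conditions by elementary inequalities of the form $0<\ell b_j<1$; some explicit construction of this kind (or a completed version of your prime-by-prime scheme) is required before the theorem is proved. A secondary point: you invoke, as if it were a one-line evaluation of \eqref{charpoly}, the equivalence between the characteristic polynomial having a root off the unit circle and $1/n_1+1/n_2+1/n_3<1$; only the forward implication is needed (to guarantee $D>0$ and to trim the list of exceptional families), and while it is true, it deserves a justification or a citation rather than an assertion --- the paper sidesteps it by handling $n_2=n_3=2$ directly.
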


\begin{proof}
If a quadratic transformation $f$ realizes orbit data $n_1\geq n_2 \geq n_3$, then it must be one of the tentative realizations from Proposition \ref{tentative3}.  By Proposition \ref{olength1} we may assume $n_3\geq 2$.  If $n_2=n_3 = 2$, we have $n_1n_2n_3 - n_1 n_2 - n_2 n_3 - n_3 n_1 = 0$ contrary, so by Proposition \ref{tentative3}, we may assume $n_2\geq 3$.

Now if $n_2 = 3, n_3 = 2$, equation \eqref{mtob} gives 
$$
b_1 = \frac{m_1 + 2m_2 + 3m_3}{n_1-6}
$$
Hence $\ell b_1 \in \Z$ for $\ell = n_1 - 6 \leq n_1 - 2$.  That is, every tentative realization of the orbit data
$n_1, 3,2,\id$ fails to actually realize this data.  The same argument rules out orbit data with $n_2 = 4, n_2 =2$ or $n_2 = n_3 = 3$.

We are left with three remaining bad cases.  The data $n_1=n_2 = 5$ and $n_3 =2$ is ruled out in the same way as the previous cases.  Suppose $n_1=n_2=n_3 = 4$.  This time \eqref{mtob} tells us that for any tentative realization, the translations are given by 
$$
b_j = \frac{m_j + (m_1+m_2+m_3)}{4},
$$
where $m_1,m_2,m_3\in\Z$.  Thus the numerator will be even for some $j$, which implies $(n_j-2)b_j = 2b_j \in\Z$. Hence the data is not realized.  Similar arguments rule out the data $n_1=n_2=6$, $n_3=2$.

Turning to the good cases, we first assume $n_2>n_1\geq 4$.  We set $m_1=1$, $m_2=m_3=0$ and take $f$ to be the tentative realization from Proposition \ref{tentative3}.  Then \eqref{mtob} gives
$$
0 < b_1 = \frac{n_2 n_3 - n_2 - n_3}{n_1 (n_2n_3 - n_2 + n_3 - n_2 n_3)-n_2 n_3} 
    = \frac{1}{n_1 - \frac1{1-n_2^{-1}-n_3^{-1}}} < \frac{1}{n_1-2}.
$$
Hence $ 0 < \ell b_1 < 1$ for all $0<\ell \leq n_1-2$.  Similarly, we find for $j=2,3$ that $0<\ell b_j < 1$ for all $0<\ell<n_j-2$.  We conclude that $f$ actually realizes the given orbit data.

The same argument works when $n_1>n_2=n_3 =4$ except that we set $m_2=1$ and $m_1 = m_3 = 0$ in choosing $f$;  it works for $n_2>n_3=3$ if we set $m_1=1$, $m_2=0$, $m_3=-1$; it works for $n_1 > n_2 \geq 5$ and $n_1\neq n_2$ if we set $m_1 = 1$, $m_2 = -1$.

The final case we need to consider is $n_3=2$ and $n_1=n_2\geq 7$.  This time we set $m_1=1$, $m_2=m_3=0$.  It
follows that $0<\ell b_2 < 1$ for all $0<\ell\leq n_2-2$.  It also follows that $b_3\notin\Z$.  For $b_1$, however, things are a bit more delicate.  One shows here that $0<\ell b_1 < 1$ for all $0<\ell\leq n_1-3$ but 
$1<(n_1-3)b_1<2$.  Regardless, the data is realizable.
\end{proof}

Of course, each realization $f$ given by Theorem \ref{actual3} lifts to an automorphism $\hat f:X \to X$ on the rational surface $X$ obtained by blowing up orbit segments $p_j^-,\dots,f^{n_j-1}(p_j^-)$.  These automorphisms are broadly similar to those in Examples \ref{smoothsymmetric}.  That is, some iterate $\hat f^k$ restricts to the identity on the proper transform $\hat C$ of $C$ in $X$.  And in a different direction, the intersection form is negative definite for divisors supported on $\hat C$, so by Grauert's theorem \cite[page 91]{bhpv} one can collapse $\hat C$ to a point and obtain a normal surface $Y$ with a cusp singularity to which $\hat f$ descends as an automorphism.

The other reducible cubic curve with nodal singularities is the one with two components $C=\{z(xy-z^2)=0\}$.  
As with $\{xyz=0\}$, there are infinitely many sets of orbit data that can be realized by quadratic transformations
fixing $C$ and also infinitely many that cannot be realized.  Rather than give the complete story, we make some broad observations and give examples indicating the range of possibilities.

\begin{thm}
Suppose that $C=\{z(xy-z^2)\}$ is the reducible cubic  with two singularities.  If $f$ is a quadratic transformation
realizing orbit data $n_1,n_2,n_3,\sigma$ whose characteristic polynomial has a root outside the unit circle, then
$f$ fixes $C$ component-wise and $f|_{C_{reg}}$ has multiplier $1$.  Moreover, either
\begin{itemize}
\item $\sigma$ is a transposition; or
\item $\sigma =\id$ and two of the $n_j$ are equal.
\end{itemize}
\end{thm}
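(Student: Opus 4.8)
The plan is to run through the ways $f$ can interact with the two components $L=\{z=0\}$ and $Q=\{xy=z^2\}$ of $C$. Since $C$ has nodal singularities, $\pico(C)\cong\C^*$, so the multiplier of $f|_{C_{reg}}$ can only be $a=1$ or $a=-1$. Because $\deg L=1\neq 2=\deg Q$, condition \ref{1st} of Theorem \ref{quadauts} leaves exactly two possibilities for the points of $I(f)$: two of them on $Q$ and one on $L$, which forces $f$ to fix each component; or all three on $Q$, which forces $f$ to exchange $L$ and $Q$ (in any other distribution a component of $C$ would have to be contracted, impossible since $f(C)=C$). So there are four cases, indexed by (fixes/swaps the components) $\times$ ($a=1$ or $a=-1$).

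First I would dispose of three of them using the positive-entropy hypothesis. If $f$ swaps $L$ and $Q$ with $a=1$, then $f$ permutes the components of $C$ transitively with multiplier $1$, so Theorem \ref{plus1} produces an invariant elliptic fibration, hence zero entropy. If $f$ fixes both components with $a=-1$, Corollary \ref{minus1} shows $f$ is linear, again of zero entropy. Both contradict the assumption that $P(\lambda)$ has a root off the unit circle, so the only survivor is ``$f$ fixes $L$ and $Q$, $a=1$'', which is the first assertion of the theorem.

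For the statement about $\sigma$, work in the surviving case and relabel so that $p_3^+\in L$ and $p_1^+,p_2^+\in Q$. Condition \ref{2nd} of Theorem \ref{quadauts} then forces $p_3^-\in L$ and $p_1^-,p_2^-\in Q$, since the exceptional line through $p_1^+$ and $p_2^+$ is a secant of $Q$ and so meets $C$ again on $L$, whereas the two exceptional lines through $p_3^+$ meet $C$ again on $Q$. As $f$ maps $L$ to itself, the whole forward orbit of $p_3^-$ stays in $L$, and the only point of $I(f)$ on $L$ is $p_3^+$; hence $f^{n_3-1}(p_3^-)=p_{\sigma_3}^+$ forces $\sigma(3)=3$, so $\sigma$ is either the identity or the transposition of $1$ and $2$. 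In the transposition case we are done. If $\sigma=\id$, write $f|_{C_{reg}}$ as translation by $b_Q$ on $Q$ and $b_L$ on $L$; combining condition \ref{4th} with $f^{n_j-1}(p_j^-)=p_j^+$ for $j=1,2$ gives $(n_j-2)b_Q=b_L$ in $\C/\Z$, hence $(n_1-n_2)b_Q=0$. If $b_Q$ has infinite order this gives $n_1=n_2$; if $b_Q$ has finite order $N$, then $f|_Q$ is an order-$N$ translation, so the $f$-orbit of $p_1^-$ has length $N$, minimality of the $n_j$ forces $n_1,n_2\le N$, and $N\mid n_1-n_2$ with $|n_1-n_2|<N$ again gives $n_1=n_2$.

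The main obstacle is the fourth case, $f$ swaps $L$ and $Q$ with $a=-1$. Here all six points $p_j^\pm$ lie on $Q$, one checks (by tracking how the isomorphism $f|_L\colon L\to Q$ moves the two nodes) that $f$ fixes both nodes, and the forward orbit of each $p_j^-$ alternates between $Q$ and $L$, so realizing the orbit data forces every $n_j$ to be odd. Summing the relations from conditions \ref{3rd} and \ref{4th} over $j=1,2,3$ (using that $\sigma$ is a permutation) forces the translation $\gamma:=b_L-b_Q$, which governs $f^2|_{C_{reg}}$, to satisfy $\frac{1}{2}(\sum_j n_j-5)\,\gamma=0$ in $\C/\Z$; thus $\gamma$ has finite order $N$ dividing $\frac{1}{2}(\sum_j n_j-5)$. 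Minimality bounds each $n_j\le 2N-1$, which together with the divisibility relation confines $\sum_j n_j$ to a short finite list, and a direct check in the spirit of Theorem \ref{actual} (each orbit meets $I(f)$ prematurely) eliminates the survivors, leaving $\sum_j n_j$ too small for $P(\lambda)$ to have a root off the unit circle by Proposition \ref{toofew}. This bookkeeping---pinning down exactly when the swap configuration can persist long enough to support positive entropy---is where the work lies; the other three cases are immediate from Theorem \ref{plus1}, Corollary \ref{minus1}, and Theorem \ref{quadauts}.
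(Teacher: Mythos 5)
Three of your four cases, and your analysis of $\sigma$ and of $n_1=n_2$ in the surviving case, are correct and essentially the paper's argument: the distribution of $I(f)$ forced by condition \ref{1st} of Theorem \ref{quadauts}, the use of Theorem \ref{plus1} to kill (swap, multiplier $+1$), Corollary \ref{minus1} to kill (fix, multiplier $-1$), the observation that $\sigma(3)=3$, and the torsion argument giving $n_1=n_2$ when $\sigma=\id$ all match the paper.

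The genuine gap is the case where $f$ swaps the two components with multiplier $-1$, which you call the main obstacle and do not actually prove. Your key relation $\tfrac12(\sum_j n_j-5)\,\gamma=0$ is asserted, not derived (and the coefficient does not come out that way when one does the sum), and in any case a finite-order conclusion for $\gamma=b_L-b_Q$ is far too weak to "confine $\sum_j n_j$ to a short finite list": for each admissible order $N$ one gets candidate data with $n_j$ up to $2N-1$, so $\sum n_j$ is unbounded as $N$ varies, and the decisive step --- "a direct check in the spirit of Theorem \ref{actual} eliminates the survivors" --- is exactly the content you would need and never carry out (likewise "one checks that $f$ fixes both nodes" is unverified and unused). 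The point you are missing is that no orbit data enters at all here: with all $p_j^\pm$ on the conic, condition \ref{4th} of Theorem \ref{quadauts} gives $p_j^-+p_j^+\sim -(b_Q+b_L)$ for every $j$, while condition \ref{3rd} and the remark following the theorem give $\sum p_j^+\sim\sum p_j^-\sim -(2b_Q+b_L)$; summing the first over $j$ and comparing forces $b_Q\sim b_L$, i.e.\ $\gamma=0$ and $f^2|_{C_{reg}}=\id$. Then minimality forces every $n_j\le 2$, the lift lives on a blowup of at most six points, and Proposition \ref{toofew} shows all roots of the characteristic polynomial lie on the unit circle, contradicting the hypothesis. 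Until you either reproduce this computation or genuinely execute your finite-check program (which, as stated, cannot terminate), the theorem is not proved.
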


\begin{proof}
The possible multipliers for $C$ are $\pm 1$.  Let $b,c\in\C^*$ denote the translations of $f$ on
$\{xy-z^2\}$ and $\{z=0\}$, respectively.

Suppose that the multiplier is $-1$.  Then by Corollary \ref{minus1}, $f$ switches the two components of $C_{reg}$.  Then $f^2(p)\sim p+(b-c)$ on the conic $\{xy-z^2\}$ and $f^2(p)\sim p+(c-b)$ on $\{z=0\}$.  Moreover, degree considerations force all points $p_j^\pm$ of indeterminacy for $f$ and $f^{-1}$ to lie on this conic.  Hence from Theorem \ref{quadauts} we have $p_j^- + p_j^+ \sim b-c$ for $j=1,2,3$; and
$\sum p_j^- \sim \sum p_j^+ \sim -2b -c$.  Combining all the formulas gives
$$
-3(b+c) \sim \sum(p_j^+ + p_j^-) \sim -2b-4c,
$$
which implies that $b-c = 0$.  Hence $f^2 =\id$ on $C$.  It follows that $f$ can only realize orbit data for which
all orbit lengths satisfy $n_j\leq 2$.  Proposition \ref{toofew} now implies that all roots of the characteristic polynomial have magnitude $1$, contrary to hypothesis.

We can assume therefore that $f|_{C_{reg}}$ has multiplier $+1$.  Theorem \ref{plus1} implies that $f$ fixes $C$ component-wise.  Comparing degrees, we find that $\{xy-z^2\}$ contains two points, say $p_1^+,p_2^+$, of $I(f)$ and
$\{z=0\}$ contains $p_3^+$.  Since the components map to themselves, it follows that $p_1^-,p_2^- \in \{xy=z^2\}$ and $p_3^- \in \{z=0\}$.   Theorem \ref{quadauts} gives
$$
p_1^- - p_1^+ \sim p_2^- - p_2^+ \sim - b - c, \quad p_3^+-p_3^- \sim -2b, \quad \sum p_j^- \sim -2b-c.
$$
The permutation $\sigma$ in the orbit data must fix the index $3$.  Hence either $\sigma = \id$ or $\sigma$ switches the indices $1$ and $2$.  Suppose we are in the former case.  Then for $j=1,2$, we have $p_j^+ - p_j^- \sim (n_j-1)b$.  Combining this with the formulas above gives $(n_j-1) b \sim c$ and hence $(n_2-n_1)b \sim 0$.  So if $n_2 \neq n_1$, we see that $b \sim m/n$, where $0<n < \max\{n_1-1,n_2-1\}$ and $0\leq m < n$ are integers.  So if, say, $n_2 \geq n_1$, we find $f^{n_j-n-1}(p_2^-) \sim p_2^+$ and therefore $f$ does not realize the given orbit data.  It follows that $n_2 = n_1$.
\end{proof}

\begin{eg} We can realize the orbit data $n_1=n_2=5$, $n_3=4$, $\sigma = \id$ on $C=\{(xy-z^2)z=0\}$ as follows.
Choose $p_1^-,p_2^-\in\{xy=z^2\}$ so that $p_1^- \sim 0 \in \C/\Z$, $p_2^-\sim i$, and $p_3^-\in \{z=0\}$ so that $p_3^- \sim -i-5/7$.  Then from Theorem \ref{quadauts} we obtain a quadratic transformation $f$ with $I(f^{-1}) = \{p_1^-,p_2^-,p_3^-\}$ that properly fixes each component of $C$, acting on $\{xy=z^2\}$ by $f(p)\sim p + 1/7$ and on $\{z=0\}$ by $f(p)\sim p + 3/7$.  Also, we obtain that the points in $I(f)$ satisfy $p_3^- = p_3^+ - 2/7$, and that for $j=1,2$, $p_j^- \sim p_j^+ + 4/7$.  Since for each $j$, the points $p_j^+$ and $p_j^-$ lie in the same component of $C$, we infer that $f^3(p_3^-) = p_3^+$ and that for $j=1,2$, $f^4(p_j^-) = p_j^+$.  Hence $f$ tentatively realizes the given orbit data.  Since as one verifies directly, all 14 points $p_j^-,\dots,f^{n_j-1}(p_j^-)$, $j=1,2,3$ are distinct, we conclude that $f$ realizes the give orbit data.
\end{eg}

\begin{eg}
Let $p_1^-,p_2^- \in \{xy=z^2\}$ be given by $p_1^- \sim 8/13$, $p_2^- \sim 0$, and $p_3^-\in \{z=0\}$ by $p_3^-\sim 12/13$.  Then from Theorem \ref{quadauts}, we get a unique quadratic transformation $f$ with $I(f^{-1}) = \{p_1^-,p_2^-,p_3^-\}$ that properly fixes each component of $C$, acting by $f(p)\sim p+3/13$ on $\{xy=z^2\}$ and by $f(p)\sim p+1/13$ on $\{z=0\}$.  The points in $I(f)$ are given by $p_1^+ \sim 12/13$, $p_2^+ \sim 4/13$, $p_3^+\sim 5/13$.  From this information, one verifies that $f$ realizes the orbit data $n_1=3$, $n_2 = 4$, $n_3=7$, $\sigma = (12)$.
\end{eg}

\section{Appendix: the group law on a plane cubic curve}
\centerline{\it by Igor Dolgachev}
\vspace*{.2in}

Let $C$ be a reduced connected projective algebraic curve over an algebraically closed field $\bbK$.  Let  $\Pic(C)$ be the group of isomorphism classes of invertible sheaves on $C$. The exact sequence of abelian groups associated with the exact sequence of abelian sheaves 
$$
1\to \calO_C^* \to \calK_C^* \to \calK_C^*/\calO_C^* \to 1
$$
identifies $\Pic(C) \cong H^1(C,\calO_C^*)$ with the group  $\Div(C) = \Gamma(C,\calK_C^*/\calO_C^*)$ of Cartier divisors modulo principal Cartier divisors $\ddiv(f)$, the images of $f\in \Gamma(C,\calK_C^*)$ in $\Div(C)$. Here $\calK_C$ is the sheaf of total rings of fractions of the structure sheaf $\calO_C$ on $C$. We employ the usual notation for linear equivalence of Cartier divisors $D\sim D'$ (note that this is different than the meaning of $\sim$ elsewhere in the paper), letting $[D]$ denote the linear equivalence class of a Cartier divisor $D$. 

For any $D\in \Div(C)$ and any closed point  $x\in C$ a representative $\phi_x$ of the image $D_x$ of $D$ in $\calK_{C,x}^*/\calO_{C,x}^*$ in $\calK_{C,x}^*$ is called a local equation of $D$ at $x$. The homomorphism $\Div(C)\to H^1(C,\calO_C^*)$ assigns to a Cartier divisor $D$ the isomorphism class of the invertible sheaf $\calO_C(D)$ whose sections over an open subset $U$ are elements $f \in \calK_C(U)^*$ such that, for any $x\in C$, we have  $f_x\phi_x\in \calO_{C,x}$. The correspondence $D\mapsto \calO_C(D)$ defines an isomorphism between the group of linear equivalence classes of Cartier divisors and the group of isomorphism classes of invertible sheaves. Each group will be identified with the group $\Pic(C)$.

A Cartier divisor $D$ is called \emph{effective} if all its local equations can be chosen from $\calO_{C.x}$. An effective Cartier divisor can be considered as a closed subscheme of $C$. The number $h^0(\calO_D) = \dim_{\bbK} H^0(C,\calO_D)$ is called the \emph{degree} of $D$ and is denoted by $\deg D$. Every Cartier divisor $D$ can be written uniquely as a difference $D_1-D_2$ of effective divisors (one uses the additive notation for the group of divisors). The degree of $D$ is defined as the differences of the degrees $\deg D = \deg D_1-\deg D_2$. The degree of a principal divisor is equal to  $0$, and this allows one to define $\deg \calL$ for any invertible sheaf of $C$. An equivalent definition (see \cite{Mumford}) is
$$\deg \calL = \chi(C,\calL)-\chi(C,\calO_C).$$
The Riemann-Roch Theorem on $C$ becomes  equivalent to the assertion that 
$$
\deg:\Pic(C) \to \Z, \quad \calL\mapsto \deg \calL,
$$
is a homomorphism of abelian groups. 

A global  section $s:\calO_C\to \calL$ defines, after taking the transpose ${}^ts:\calL^{-1}\to \calO_C$, a closed subscheme of $C$ with the ideal sheaf  ${}^ts(\calL^{-1})$. If its support is finite, then it is an effective Cartier divisor denoted by $\ddiv(s)$. In this case $\calO_C(\ddiv(s)) \cong \calL$.

A Cartier divisor supported in the set $\creg$ of closed nonsingular points of $C$ is called a Weil divisor. It can be identified with an element of the free abelian group generated by the set $\creg$. 

Let  $V_1,\ldots,V_r$ be the irreducible components of $C$.  Denote by $\iota_j:V_j\hookrightarrow C$ the corresponding closed embeddings. For any invertible sheaf $\calL$ on $C$ we denote by $\deg_j\calL$ the degree of $\iota_j^*\calL$. The multi-degree vector $\bf{deg}(\calL) = (\deg_1\calL,\ldots,\deg_r\calL)\in \Z^r$ defines a surjective homomorphism
$\Pic(C) \to \Z^r.$
The kernel of this homomorphism is denoted by $\Pic^0(C)$. 

Next we assume that $C$ is  a connected reduced curve of arithmetic  genus 1 lying on a nonsingular projective surface $X$. Recall that the arithmetic genus $p_a(C)$ is defined to be equal to $\dim_\bbK H^1(C,\calO_C)$. Thus we have $\chi(C,\calO_C) = 0$ and hence 
$\chi(C,\calL) = \deg \calL.$
The Serre Duality Theorem gives $H^1(C,\calL) \cong H^0(C,\calL^{-1}\otimes \omega_C)$, where $\omega_C$ is the canonical sheaf on $C$. By the adjunction formula,   $\omega_C = \omega_X\otimes \calO_X(C)\otimes \calO_C$, where $\omega_X$ is the canonical sheaf on $X$. Since $H^0(C,\omega_C) \cong H^1(C,\calO_C) \cong \bbK$, we obtain that  $\omega_C$ has a nonzero section whose restriction to each component is non-zero.  The zero divisor of this section is an effective divisor of degree 0, hence the trivial divisor. Thus $\omega_C \cong \calO_C$. This easily implies

\begin{lem}\label{L1} Assume that $\deg_j\calL \ge 0$ for any irreducible component $V_j$ of $C$. Then 
$$
\dim H^0(C,\calL) = \deg \calL.
$$
Moreover, each nonzero section has finite support.
\end{lem}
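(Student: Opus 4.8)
The plan is to deduce both statements from the Riemann--Roch theorem and Serre duality on $C$, both of which are already available above. Since $p_a(C)=1$ we have $\chi(C,\calO_C)=0$, so the relation $\deg\calL=\chi(C,\calL)-\chi(C,\calO_C)$ gives $\chi(C,\calL)=\deg\calL$, i.e.
\[
\dim H^0(C,\calL)-\dim H^1(C,\calL)=\deg\calL .
\]
Hence the asserted equality is equivalent to the vanishing $H^1(C,\calL)=0$, and by the Serre duality invoked above together with the identification $\omega_C\cong\calO_C$ just established, $H^1(C,\calL)\cong H^0(C,\calL^{-1})^{\vee}$. Since $\deg_j\calL^{-1}\le0$ for every component $V_j$, the first assertion therefore reduces to the claim that a line bundle $\calM$ on the connected reduced curve $C$ with $\deg_j\calM\le0$ for all $j$ and $\calM\not\cong\calO_C$ has no nonzero global section.

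To prove this I would take a nonzero $s\in H^0(C,\calM)$ and set $T=\{\,j:\iota_j^*s\neq0\,\}$, which is nonempty. For $j\in T$, $\iota_j^*s$ is a nonzero section of the degree-$\le0$ line bundle $\iota_j^*\calM$ on the integral curve $V_j$, hence nowhere vanishing, and in particular $\deg_j\calM=0$. The decisive local point comes at a point $x$ lying on some $V_j$ with $j\in T$ and also on another component $V_k$: the natural maps identify the one-dimensional vector spaces $\iota_j^*\calM\otimes\bbK(x)$ and $\calM\otimes\bbK(x)$, so the image of $s_x$ in $\calM\otimes\bbK(x)$ is nonzero, and Nakayama's lemma then forces $s_x$ to generate the free rank-one module $\calM_x$; consequently $\iota_k^*s$ is nonzero at $x$, so $k\in T$. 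Hence $\bigcup_{j\in T}V_j$ is open and closed in $C$ and therefore all of $C$ by connectedness, so $s$ vanishes nowhere and defines an isomorphism $\calO_C\xrightarrow{\sim}\calM$, contradicting $\calM\not\cong\calO_C$. I expect this reducible-curve bookkeeping --- propagating ``nowhere vanishing'' across the singularities of $C$ in order to pass from the individual components to all of $C$ --- to be the only real obstacle; for irreducible $C$ the claim is immediate from $\deg\calM\le0$.

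For the ``moreover'', I would note that a nonzero $s\in H^0(C,\calL)$ which does not vanish identically on any component of $C$ is a non-zero-divisor, so its zero scheme is an effective Cartier divisor $\ddiv(s)$, which by the first part has degree $\deg\calL$ and in particular finite support. In our applications $\calL=\calO_C(D)$ with $D$ cut out by a curve sharing no component with $C$, so the relevant sections automatically vanish on no component; more generally the same local Nakayama computation can be used to decide, case by case, whether a given $\calL$ admits a section vanishing on a whole component.
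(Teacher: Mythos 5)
Your proof of the displayed equality is correct and is essentially the argument the paper intends (the paper offers no details beyond ``this easily implies''): Riemann--Roch with $\chi(C,\calO_C)=0$ reduces the equality to $H^1(C,\calL)=0$, Serre duality together with $\omega_C\cong\calO_C$ converts this to the vanishing of $H^0(C,\calL^{-1})$, and your Nakayama-plus-connectedness propagation across points lying on two components is a complete proof of that vanishing (by Lemma \ref{L2} every singular point of a reducible $C$ lies on at least two components, and your local argument is insensitive to whether the intersection is transverse or tangential, so all cases are covered). One caveat: your reduction tacitly requires $\calL\not\cong\calO_C$, and indeed for $\calL=\calO_C$ the asserted equality fails ($h^0=1\neq 0=\deg$). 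That is a defect of the lemma as stated rather than of your argument --- the paper only applies it to sheaves of degree $1$ --- but it should be flagged rather than passed over silently, since as written your ``reduces to'' step is not valid in that case.

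The ``moreover'' is where there is a genuine gap. With the paper's earlier convention, the support in question is the support of the zero subscheme of the section, so the claim is that no nonzero section vanishes identically along a component; your paragraph only treats sections that already vanish on no component, which is exactly what is to be proved, and the appeal to ``our applications'' and to deciding matters ``case by case'' is not a proof. In fact the literal statement is false under the stated hypothesis $\deg_j\calL\ge 0$: take $C=\{xyz=0\}$ and $\calL=\calO_C(1)$ (multidegree $(1,1,1)$, with $h^0=3=\deg\calL$ as in the first part); the restriction of the linear form $z$ is a nonzero global section vanishing identically along the component $\{z=0\}$. A similar section exists for a conic plus a secant line with multidegree $(2,0)$. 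What is true, and what the paper actually needs, is the finite-support assertion for the multidegrees of Lemma \ref{L3} (degree $1$ on one component and $0$ on the others), and the paper proves precisely this inside the proof of Lemma \ref{L3}, where the possibility $\iota_j^*(s)=0$ is excluded by a degree count against the two points of $V_j\cap(C\setminus V_j)$. So the honest completion of your write-up is either to prove the claim under that restricted hypothesis (the argument of Lemma \ref{L3}), or to record that the ``moreover'' in the stated generality must be weakened to: a nonzero section whose restriction to every component is nonzero has finite support, hence defines $\ddiv(s)$ of degree $\deg\calL$.
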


The following lemma describes the structure of a reduced connected curve of arithmetic genus 1. Its proof is standard (see \cite{Reid}, 4.8) and is omitted. 

\begin{lem}\label{L2} Let $C$ be a connected reduced curve of arithmetic genus 1 lying on a nonsingular projective surface $X$. Let $V_1,\ldots,V_r$ be its irreducible components.
\begin{itemize}
\item[(i)] If $r =1$, i.e. $C$ is irreducible, then either $C$ is nonsingular, or has a unique singular point, an ordinary node or an ordinary cusp.
\item [(ii)] If $r > 1$, then each $V_i$ is isomorphic to $\cp^1$ and $V_i\cdot (C-V_i) = 2.$
\end{itemize}
\end{lem}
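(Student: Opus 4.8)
The plan is to handle the two parts separately: part (i) by normalizing $C$, and part (ii) by computing the multidegree of $\omega_C$. Throughout I would use the two facts recorded just above, namely that $\chi(C,\calO_C)=0$ --- equivalently $\dim_{\bbK}H^1(C,\calO_C)=p_a(C)=1$ --- and that $\omega_C\cong\calO_C$.

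For part (i), assume $C$ is irreducible and let $\nu\colon\tilde C\to C$ be its normalization, a smooth irreducible projective curve. First I would invoke the normalization (conductor) exact sequence
\[
0\to\calO_C\to\nu_*\calO_{\tilde C}\to\bigoplus_{x\in C_{sing}}\bigl(\tilde{\calO}_x/\calO_{C,x}\bigr)\to 0,
\]
where $\tilde{\calO}_x$ is the integral closure of $\calO_{C,x}$. Comparing Euler characteristics and writing $\delta_x\eqdef\dim_{\bbK}\bigl(\tilde{\calO}_x/\calO_{C,x}\bigr)$, which is $\ge 1$ at every singular point, gives $p_a(C)=p_g(\tilde C)+\sum_{x\in C_{sing}}\delta_x$, with $p_g(\tilde C)\ge 0$ the genus of $\tilde C$. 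Since $p_a(C)=1$, either $C$ is already smooth, or $p_g(\tilde C)=0$ and $C$ has a single singular point $x_0$ with $\delta_{x_0}=1$. In the latter case $x_0$ is a plane-curve singularity because $C$ lies on the smooth surface $X$, so its multiplicity $m$ obeys the classical bound $\delta_{x_0}\ge\binom{m}{2}$, which forces $m=2$. The tangent cone of a double point is then either a pair of distinct lines --- an ordinary node, with $\delta=1$ --- or a double line, in which case a single blow-up either resolves the singularity, exhibiting it as an ordinary cusp $y^2=x^3$ (still with $\delta=1$), or produces a further point of multiplicity $\ge 2$ and hence $\delta_{x_0}\ge 2$, contradicting $\delta_{x_0}=1$. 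This gives (i).

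For part (ii), assume $C=V_1\cup\dots\cup V_r$ with $r\ge 2$. I would restrict $\omega_C\cong\calO_X(K_X+C)|_C$ to a component $V_i$ and apply the adjunction formula $V_i\cdot(K_X+V_i)=2p_a(V_i)-2$ on the smooth surface $X$ to obtain
\[
\deg_i\omega_C=(K_X+C)\cdot V_i=\bigl(2p_a(V_i)-2\bigr)+V_i\cdot(C-V_i).
\]
Because $\omega_C\cong\calO_C$ has multidegree zero, the left-hand side vanishes, so $V_i\cdot(C-V_i)=2-2p_a(V_i)$. Since $C$ is connected and $r\ge 2$, the component $V_i$ meets $C-V_i$, hence $V_i\cdot(C-V_i)\ge 1$; the only consistent possibility is $p_a(V_i)=0$, and then $V_i\cdot(C-V_i)=2$. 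Finally, an irreducible projective curve with $p_a=0$ has genus $0$ and --- by the same normalization count --- no singular points, hence is isomorphic to its normalization $\cp^1$. This proves (ii).

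The Euler-characteristic bookkeeping in (i) and the adjunction computation in (ii) are routine, and the use of connectedness in (ii) is immediate. The one step carrying genuine content, and the step I expect to be the main obstacle, is the passage from $\delta_{x_0}=1$ to `$x_0$ is an ordinary node or an ordinary cusp': this is the classical classification of $\delta=1$ plane-curve singularities, resting on the multiplicity bound $\delta\ge\binom{m}{2}$ and the behaviour of a double point under one blow-up. Since that classification is standard, no real difficulty remains.
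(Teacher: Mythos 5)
Your argument is correct. Note that the paper itself gives no proof of Lemma \ref{L2} --- it is declared standard and delegated to Reid (\emph{Chapters on algebraic surfaces}, 4.8) --- so there is nothing internal to compare against; what you have written is precisely the standard argument that reference supplies: for (i), the normalization sequence giving $p_a(C)=p_g(\tilde C)+\sum_x\delta_x$ and the classification of plane-curve singularities with $\delta=1$ as ordinary nodes or cusps; for (ii), restricting $\omega_C\cong\calO_C$ (already established in the paper just before Lemma \ref{L1}) to a component and using surface adjunction $(K_X+V_i)\cdot V_i=2p_a(V_i)-2$ together with connectedness to force $p_a(V_i)=0$ and $V_i\cdot(C-V_i)=2$. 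The only step with real content is, as you say, the $\delta=1$ classification; your tangent-cone/blow-up sketch is fine and, usefully here, does not depend on the characteristic of $\bbK$ (the tangent cone of a double point still splits into one or two lines over an algebraically closed field, and the unibranch case with smooth blow-up has semigroup $\langle 2,3\rangle$, i.e.\ is an ordinary cusp, in any characteristic --- consistent with the char-$3$ cuspidal cubic singled out later in Proposition \ref{P1}).
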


The structure of $C$ makes convenient to index the components of $C$ by the cyclic group $\Z/r\Z$, so that each component $V_i$ either intersects  $V_{i-1}$ and $V_{i+1}$ transversally at one point, or $r = 2$ and $V_i$ is tangent to $V_{i+1}$, or $r = 3$ and $V_i$ intersects $V_{i-1}$ and $V_{i+1}$ transversally at the same point.

The following lemma is crucial for defining a group law on the set $\creg$.

\begin{lem}\label{L3} Let $\calL\in \Pic(C)$  with $\deg \iota_i(\calL) = 1$ and $\deg \iota_k(\calL) = 0$ for $k\ne i$.  Then 
$$\calL \cong \calO_C(x_i)$$
for a unique nonsingular closed point $x_i$ on  $V_i$.
\end{lem}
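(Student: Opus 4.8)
The plan is to realize $x_i$ as the scheme-theoretic divisor of the essentially unique global section of $\calL$, and then read off its location and smoothness from the numerical hypotheses. First I would note that $\deg\calL=\sum_k\deg_k\calL=1$ and $\deg_k\calL\ge 0$ for every component, so Lemma \ref{L1} applies: $\dim_\bbK H^0(C,\calL)=1$ and every nonzero section has finite support. Fixing a nonzero section $s$ and taking the transpose of $s\colon\calO_C\to\calL$ produces an effective Cartier divisor $D=\ddiv(s)$ with $\calO_C(D)\cong\calL$ and $\deg D=\deg\calL=1$.

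Next I would pin down $D$ as a single reduced point. Since $D$ is effective, $h^0(\calO_D)=\deg D=1$, and because $\calO_D$ is supported at finitely many points this is its dimension as a $\bbK$-algebra; a unital $\bbK$-algebra of $\bbK$-dimension $1$ is $\bbK$, so $D$ consists of a single closed point $x$ taken with its reduced structure. To see $x$ is nonsingular I would argue by contradiction: if $x$ were singular, the local equation $\phi_x\in\calO_{C,x}$ of $D$ would be a non-zero-divisor with $\calO_{C,x}/(\phi_x)\cong\calO_{D,x}=\bbK$, hence would generate the maximal ideal $\frakm_{C,x}$; but by Lemma \ref{L2} every singular point of $C$ is a node, a cusp, or a point where smooth branches cross, and at each of these $\frakm_{C,x}$ is not principal (its cotangent space is two-dimensional) --- a contradiction. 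Thus $x\in\creg$, so $x$ lies on exactly one component $V_j$.

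To identify $j$, I would compute the multidegree of $\calO_C(x)$: near $x$ the curve $C$ coincides with $V_j$, and the local equation of $x$ is a unit away from $x$, so $\iota_j^*\calO_C(x)\cong\calO_{V_j}(x)$ has degree $1$ while $\iota_k^*\calO_C(x)$ is trivial for $k\ne j$. Matching this against the multidegree of $\calL$ forces $j=i$, and $x\in V_i$ is the required point $x_i$. Uniqueness is then immediate: any nonsingular point $x'$ with $\calO_C(x')\cong\calL$ is the divisor of a global section of $\calL$ (the canonical section of $\calO_C(x')$ transported along the isomorphism), and since $H^0(C,\calL)$ is one-dimensional that section is proportional to $s$, so $x'=\ddiv(s)=x_i$.

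The only step carrying real content, rather than formal bookkeeping, is the exclusion of a singular support point, and this is precisely where the classification of Lemma \ref{L2} is needed --- the maximal ideal at every singularity of a reduced genus-$1$ curve fails to be principal, so a degree-$1$ effective Cartier divisor cannot be supported there. Everything else is a direct application of Lemma \ref{L1} together with the definition of the degree of an effective divisor.
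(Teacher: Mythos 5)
Your argument is correct, and its skeleton coincides with the paper's: take the essentially unique nonzero section $s$ of $\calL$, show its zero scheme is a single reduced point, exclude a singular support point because a principal ideal cannot cut out a length-one subscheme at a singularity (the paper's parenthetical remark that $\dim\calO_{C,y}/(\phi_y)\ge 2$ there), locate the point on $V_i$ by multidegree, and deduce uniqueness from $\dim H^0(C,\calL)=1$. The one genuine divergence is how you get finiteness of the zero scheme. You cite the final clause of Lemma \ref{L1} (``each nonzero section has finite support''), whereas the paper's proof of Lemma \ref{L3} does not lean on that clause: it spends a paragraph ruling out $\iota_j^*(s)=0$ on some component directly, by propagating the vanishing around the cycle of components --- a degree-zero restriction that vanishes at a point of intersection with an adjacent component must vanish identically, and since by Lemma \ref{L2} each component meets the rest of $C$ in a divisor of degree $2$, this eventually forces $\iota_i^*(s)$ to vanish on the degree-two divisor $V_i\cap(C\setminus V_i)$, contradicting $\deg\iota_i^*\calL=1$. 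This difference matters because the clause you invoke is precisely the delicate point: as stated in Lemma \ref{L1} (with only $\deg_j\calL\ge 0$ assumed) it is not true in general --- for instance, on a conic plus a chord, a line bundle of degree $2$ on the conic and $0$ on the line admits a nonzero section vanishing identically on the line --- and its proof is omitted in the paper; it does hold for the multidegree $(1,0,\dots,0)$ at hand, but only by the very propagation argument just described. So your appeal to Lemma \ref{L1} is admissible within the paper's framework and yields a correct proof here, but a self-contained treatment should include the component-by-component vanishing argument, which is where the real content lies; the remaining steps in your write-up (reduced point of length one, regularity from a principal maximal ideal, the multidegree computation forcing $x\in V_i$, and uniqueness from one-dimensionality of $H^0$) all agree with the paper.
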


\begin{proof} Without loss of generality we may assume that $i = 0$. By Lemma \ref{L1}, we have $\dim H^0(C,\calL) = 1$. Let $s$ be a nonzero section of $\calL$.  Suppose $\iota_j^*(s) \ne  0$ for all $j$. Then $s$ has only finitely many zeros, hence the divisor of zeros $D$ satisfies $\calO_C(D) \cong \calL$. This implies that $\deg D = 1$ and $D$ is a Weil divisor  $1\cdot x_0$ for some nonsingular point $x_0\in V_0$ (we use that for any singular point $y$ and $\phi_y$ from the maximal ideal of $\calO_{C,y}$ we have $\dim \calO_{C,x}/(\phi_y) \ge 2)$.

Now assume that $\iota_j^*(s) =  0$ for some component $V_j$. Then $\iota_{j+1}^*(s)$ and $\iota_{j-1}^*(s)$ vanish at the points $V_j\cap V_{j+1}$ and   $V_j\cap V_{j-1}$. Since a sheaf of degree zero cannot have a non-zero section  vanishing at some point, we see that $\iota_{i}(s) = 0$ for any component $V_i$ intersecting $V_j$ and different from $V_0$. Replacing $j$ with $i$ and continuing in this way, we may assume that $j = 1$. Thus the divisor of zeros of $\iota_0^*(s)$ contains the divisor of degree 2 equal to $V_0\cap (C\setminus V_0)$. Since $\deg \iota_0^*(\calL) = 1$, this is impossible. 
\end{proof}

\begin{cor} Let $V_j$ be an irreducible component of $C$ and $\frako_j$ be a point on  $V_j$. The map 
$$
\kappa_j:V_j\cap \creg \to \Pic^0(C), \ x\mapsto \calO_C(x-\frako), \text{or}\ x\mapsto [x-\frako_j],
$$
is bijective. If used to define a structure of a group on $V_j\cap \creg$, this group becomes isomorphic to the group of points on an elliptic curve (resp. the multiplicative group $\bbK^*$ of $\bbK$, resp. the additive group $\bbK^+$ of $\bbK$) if  $V_j$ is  smooth curve of genus 1 (resp. an  irreducible nodal  curve or $V_j$ intersects $C\setminus V_j$ at two points, resp. an irreducible  cuspidal curve, or $V_j$ intersects $C\setminus V_j$ at one point).
\end{cor}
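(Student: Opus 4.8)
The plan is to obtain the bijectivity of $\kappa_j$ directly from Lemma~\ref{L3}, and then to reduce the structural claim to an identification of $\Pic^0(C)$ as an abstract group. For bijectivity: $\kappa_j(x)=[x-\frako_j]$ has multidegree $0$, so it lands in $\Pic^0(C)$; conversely, given $\mathcal{M}\in\Pic^0(C)$, the sheaf $\mathcal{M}\otimes\calO_C(\frako_j)$ has degree $1$ on $V_j$ and $0$ on every other component, so Lemma~\ref{L3} produces a \emph{unique} nonsingular $x\in V_j$ with $\calO_C(x)\cong\mathcal{M}\otimes\calO_C(\frako_j)$, i.e. a unique $x$ with $\kappa_j(x)=\mathcal{M}$; this gives surjectivity and injectivity at once. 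Since the group law on $V_j\cap\creg$ is by definition transported from $(\Pic^0(C),\otimes)$ along $\kappa_j$, it then suffices to say which group $\Pic^0(C)$ is; and as $\dim_\bbK H^1(C,\calO_C)=p_a(C)=1$, it is a one-dimensional connected commutative algebraic group, hence (by the classification of these) an elliptic curve, $\bbK^*$, or $\bbK^+$.

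When $V_j$ is smooth of genus $1$, Lemma~\ref{L2}(ii) forces $C=V_j$ to be irreducible, so $\Pic^0(C)$ is the Jacobian of the elliptic curve $C$, which the Abel--Jacobi map $x\mapsto[x-\frako_j]$ (that is, $\kappa_j$ itself) identifies with the group of points of $C$; this is classical. In the remaining cases $C$ is either an irreducible rational curve with a single node or cusp or, by Lemma~\ref{L2}, reducible with each component isomorphic to $\cp^1$, so the normalization $\nu:\tilde C\to C$ has $\tilde C$ a disjoint union of $r$ projective lines; then $H^0(\tilde C,\calO_{\tilde C}^*)\cong(\bbK^*)^r$, $\Pic(\tilde C)\cong\Z^r$ via multidegree, and $\Pic^0(\tilde C)=0$. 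Using the exact sequence of sheaves on $C$
\[
1\longrightarrow \calO_C^*\longrightarrow \nu_*\calO_{\tilde C}^*\longrightarrow \mathcal{Q}\longrightarrow 1
\]
(with $\mathcal{Q}$ a skyscraper on $C_{sing}$), the finiteness of $\nu$ (so $H^q(C,\nu_*\calO_{\tilde C}^*)=H^q(\tilde C,\calO_{\tilde C}^*)$), and $H^1(C,\mathcal{Q})=0$, the long exact cohomology sequence gives
\[
\Pic^0(C)\;\cong\;\operatorname{coker}\Bigl((\bbK^*)^r\longrightarrow \textstyle\prod_{x\in C_{sing}}\mathcal{Q}_x\Bigr),
\]
the target having total dimension $\sum_x\delta_x=r$. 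I would then compute this cokernel case by case.

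Each local factor $\mathcal{Q}_x=(\nu_*\calO_{\tilde C}^*)_x/\calO_{C,x}^*$ splits into a ``constant-term'' multiplicative part and a ``higher-jet'' additive part: an ordinary node gives $\bbK^*$; a cusp gives $\bbK^+$; a tacnode (a conic and a tangent line) gives $\bbK^*\times\bbK^+$; an ordinary triple point (three concurrent lines) gives $(\bbK^*)^2\times\bbK^+$. A unit on $\tilde C$ is constant on each component, so its image in every $\mathcal{Q}_x$ lies in the multiplicative summands; hence the image of $(\bbK^*)^r$ is purely multiplicative and of dimension $r-1$ (its kernel being $H^0(C,\calO_C^*)=\bbK^*$). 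Therefore the additive part of $\prod_x\mathcal{Q}_x$ always injects into the cokernel; since the cokernel has dimension $r-(r-1)=1$, it must \emph{equal} that additive part whenever the latter is nonzero. This yields exactly the asserted trichotomy: $\Pic^0(C)\cong\bbK^+$ when $\prod_x\mathcal{Q}_x$ has a nonzero additive part, i.e. when $C$ is cuspidal or $V_j$ meets $C\setminus V_j$ in one point (a tacnode or a triple point); and $\Pic^0(C)\cong\bbK^*$ otherwise, i.e. when $C$ is an irreducible nodal cubic or $V_j$ meets $C\setminus V_j$ in two points.

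The main obstacle is the local analysis at the non-nodal singularities: one must write down $\mathcal{Q}_x$ at a cusp, a tacnode and an ordinary triple point and check that the constant units hit none of the additive directions. The nodal and cuspidal irreducible cases are textbook, so the real work is the bookkeeping for the three reducible configurations. An alternative that bypasses this is to use the explicit rational parametrizations of $\creg$ recorded in \S1 (for instance $t\mapsto(t,t^3)$ for the cuspidal cubic, or $t\mapsto[1:-t:0],[t^2:1:t]$ for a conic and a secant line), under which the transported group law is patently $\bbK^+$ or $\bbK^*$; but then one still has to verify that such a parametrization is compatible with $x\mapsto[x-\frako_j]$, i.e. carries every collinear triple of smooth points to a triple summing to $0$.
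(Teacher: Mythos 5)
Your proof of bijectivity is the same as the paper's (Lemma \ref{L3} applied to $\mathcal{M}\otimes\calO_C(\frako_j)$, with uniqueness giving injectivity; the paper cites Lemma \ref{L1} for injectivity, but this is the same point). For the identification of the group, however, you take a genuinely different route. The paper argues concretely: it realizes $\calO_C$ inside the pushforward of the structure sheaf of the normalization, parametrizes $V_j\cap\creg$ explicitly by $\bbK^*$ or $\bbK^+$, and verifies by exhibiting the rational function $r(t)=(t-x)(t-y)/(t-1)(t-z)$ that the transported law $x\oplus y$ is literally multiplication (node, or two intersection points) or addition (cusp), leaving the one-point reducible case to the reader. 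You instead compute $\Pic^0(C)$ abstractly from the units sequence $1\to\calO_C^*\to\nu_*\calO_{\tilde C}^*\to\mathcal{Q}\to 1$, using $\sum_x\delta_x=r$, the fact that constant units on $\tilde C$ hit only the multiplicative directions of each $\mathcal{Q}_x$, and a dimension count; since the group law on $V_j\cap\creg$ is transported along $\kappa_j$, identifying $\Pic^0(C)$ as an abstract group indeed suffices. Your approach is uniform over all singularity configurations and all characteristics and avoids choosing coordinates, at the cost of the local computations of $\mathcal{Q}_x$ at the cusp, tacnode and ordinary triple point (which you correctly flag, and whose outcomes you state correctly: $\bbK^+$, $\bbK^*\times\bbK^+$, $(\bbK^*)^2\times\bbK^+$), plus two small points worth making explicit: that pullback to the normalization preserves multidegree, so $\ker(\Pic(C)\to\Pic(\tilde C))=\Pic^0(C)$, and that the cokernel is a \emph{connected} one-dimensional group (being a quotient of the connected group $\prod_x\mathcal{Q}_x$), so that containing a copy of $\bbK^+$ forces it to equal $\bbK^+$. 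What the paper's explicit method buys, and yours does not, is the concrete parametrizations (e.g. $t\mapsto(t,t^3)$ for the cusp, or the conic-plus-secant parametrization) in which the law is visibly $+$ or $\times$; these are exactly what the body of the paper uses later, and your closing ``alternative'' is essentially that route combined with Proposition \ref{P1}.
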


\begin{proof} It follows from  Lemma \ref{L1} that the map $\kappa_j$ is injective (no two closed points are linearly equivalent on $C$). For any $\calL\in \Pic^0(C)$, the sheaf $\calL\otimes \calO_C(\frako_j)$ has degree 1 on $V_j$ and degree $0$ on other components.  By Lemma \ref{L3}, $\calL\cong \calO_C(\frako_j)$ is isomorphic to $\calO_C(x)$ for a unique point $x\in V_j$. This checks the surjectivity of the map $\kappa_j$. 

The transfer of the group law on $\Pic^0(C)$ defined by the map $\kappa_j$ reads:
$x\oplus y$ is the unique point on $V_j\cap\creg$ such that 
$$x\oplus y \sim x+y-\frako_j.$$
Assume first that $C = V_0$ is irreducible. Let $\nu:Y\to C$ be the normalization map. If $C$ is a nodal curve, then $\nu^{-1} = p_1+p_2$ and we can identify $\calO_C$, via $\nu^*$, with the subsheaf of  $\calO_Y$ of functions $\phi$ such that  $\phi(p_1) = \phi(p_2)$.  Let  $f:Y \to  \cp^1$ be an isomorphism such that $f^{-1}(0) = p_1, f^{-1}(\infty) = p_2$, where we choose projective coordinates $[t_0,t_1]$ on $\cp^1$ and denote $0 = [1,0], \infty = [0,1]$. The rational function $f$ identifies  the fields of rational functions $R(C) = \Gamma(C,\calK_C)$ and $R(Y) = \Gamma(Y,\calK_Y)$ on $C$ and $Y$. Any nonsingular point $x\in C$ is identified with a point $[t_0,t_1]$ on $\cp^1\setminus \{0,\infty\}$. The latter set is identified with $\bbK^*$ by sending $[t_0,t_1]$ to $t = t_1/t_0\in \bbK^*$. Now choose $\frako =\frako_0 =  1$. Then, for any $x,y,z\in \creg$, 
$x+y\sim \frako+z$ if and only if there exists  a rational function $r(t) = (t-x)(t-y)/(t-1)(t-z)$ with  
$r(0) = r(\infty)$. The latter condition implies $xy/z = 1$, hence $z = x\oplus y = xy.$ This defines an isomorphism of groups $\creg \cong \bbK^*$.

Using similar notation if $C$ is a cuspidal curve, we have $\nu^{-1} = 2p$ for some point $p\in Y$. We may identify $\calO_C$ with the subsheaf of $\calO_Y$ of functions $\phi$ such that  $\phi-\phi(p)\in \frakm_{Y,p}^2$. Now we identify 
$\creg$ with $\cp^1\setminus \{\infty\}$ and take $\frako = 0$. Then we have $x+y\sim \frako+z$ if and only if there exists  a rational function $r(t) = (t-x)(t-y)/(t-1)(t-z)$ such that $r-r(\infty) = r(t)-1$ has zero at $\infty$ of order $2$.  It is easy to see that this gives the condition $z= x\oplus y = x+y$. This defines an isomorphism of groups $\creg \cong \bbK^+$.

Now let us assume that $C$ is reducible and $V_i\cdot (C-V_i)$ consists of two points.  We identify $\calO_C$ with the subsheaf of $\prod \calO_{V_i}$ whose sections on an open subset $U$ are those $(\phi_1,\ldots,\phi_r), \phi_i\in \Gamma(U\cap V_i,\calO_{V_i}),$ such that $\phi_i(V_i\cap V_j) = \phi_j(V_i\cap V_j)$. We identify each $V_i$ with $\cp^1$ and  assume that $V_j = V_0$.  If $r > 2$, we identify  the point $V_0\cap V_{1}$ with $0$, and the point $V_0\cap V_{-1}$ with $\infty$. In the case $r = 2$, we set $V_0\cap V_1 = \{0,\infty\}$. Now we choose $\frako_0= 1$. For any $x,y,z\in V_0$, we have 
$x+y\sim \frako+z$ if and only there exists a rational functions $f_i$ such that $f_0 = (t-x)(t-y)/(t-1)(t-z), $ and $f_i $ are constants for $i\ne 0$ such that $r(0) = f_1 = f_2= \ldots f_{-1} = r(\infty)$. This implies that $xy= z$ and shows that $V_0\cap\creg$ is isomorphic to $\bbK^*$. 

We leave the case when $V_j\cap (C\setminus V_j)$ consists of one point to the reader. 
\end{proof}

Now let us define the group law on $\creg$.  We fix some $\frako_j$ on each $V_j\cap\creg$. The group law will depend on this choice. We designate $\frako_0$ to be the zero element.

By Lemma \ref{L3}, for any $x_i\in V_i\cap\creg, x_j\in V_j\cap\creg$,
$$\calO_C(x_i+x_j-\frako_i-\frako_j+\frako_{i+j}) \cong \calO_C(y)$$
for some unique  point $y\in V_{i+j}\cap\creg$. We define the group law by setting
$$x_i\oplus x_j: = y.$$
In other words,  by definition,
$$x_i\oplus x_j \sim x_i+x_j+\frako_{i+j} - \frako_i-\frako_j.$$
It is immediately checked that the binary operation $\oplus$ satisfies the axioms of an abelian group with the zero element equal to $\frako_0$. In this way we equip the set  $\creg$ with an abelian group law. The points $V_0\cap\creg$ form a subgroup of $\creg$,  with cosets equal to $V_i\cap\creg$. The quotient group is isomorphic to the cyclic group $\Z/r\Z$. We have a group isomorphism
$$\creg \cong \Pic^0(C)\times \Z/r\Z \cong V_0\times \Z/r\Z.$$
Notice that the group $\creg$ acquires (non-canonically) a structure of a commutative algebraic group with connected component $V_0$  isomorphic to $\Pic^0(C)$. In fact $\Pic^0(C)$ has the structure of a commutative algebraic group (the \emph{generalized Jacobian} of $C$) for any (even non-reduced) projective algebraic curve \cite{Oort}.

If $C$ is a nonsingular curve, we immediately see that the group law coincides with the usual group law on an elliptic curve as defined, for example, in \cite{Hartshorne}. The group law on the component $V_0\cap\creg$ is the same as the group law obtained by the transfer of the group law on $\Pic^0(C)$ by means of the map $\kappa_0$ defined by the point $\frako_j$.

Let us describe  the group $\Aut(C)$ of automorphisms of $C$ in terms of the group law on each component $V_i\cap\creg$ isomorphic to $\Pic(V_0)$ considered as a one-dimensional algebraic group.  The group $\Aut(C)$ acts naturally on $\Pic(C)$ by  $\calL\to (\sigma^{-1})^*\calL,\  \sigma\in \Aut(C)$. In divisorial notation, $\sigma$ sends  $[D]$ to $ [\sigma(D)]$, where $\sum m_ix_i \mapsto \sum m_i\sigma(x)$. This action preserves the degree and the multi-degree. Thus it defines a homomorphism of groups
$$a:\Aut(C) \to \Aut(\Pic^0(C)).$$
The group $\Aut(\Pic^0(C))$, where $\Pic^0(C)$ is considered as a one-dimensional algebraic group is of course well-known. We have three different cases for $\Pic^0(C)$: an elliptic curve, or $\bbK^*$, or $\bbK$. Note that our automorphisms are automorphisms of the corresponding algebraic groups. In the first case, 
$$\Aut_{\gr}(\Pic^0(C)) \cong \begin{cases}\Z/2\Z&\text{if} \ j(C)\ne 0, 1728,\\
 \Z/4\Z&\text{if}\ j(C) = 1728, \cha(\bbK) \ne 2,3,\\
 \Z/6\Z&\text{if}\ j(C) = 0, \cha(\bbK) \ne 2,3,\\
  \Z/12\Z &\text{if}\ j(C) = 0 = 1728, \cha(\bbK) = 3,\\
 \Z/24\Z &\text{if}\ j(C) = 0 = 1728, \cha(\bbK) = 2.\end{cases}
 $$
(see \cite{Silverman}, Chap. III, \S 10). Here $j(C)$ is the absolute invariant of $C$ defined via the Weierstrass equation.  If $\bbK = \C$ then $\pico(C) \cong  (\C/\Gamma,+)$ for some discrete subgroup $\Gamma$, and a group automorphism of $\pico(C)$ is given by $z\mapsto \lambda z$, for some $\lambda\in \C^*$ such that $\lambda\Lambda = \Lambda$. 

We also have 
$$\Aut_{\textup{gr}}(\bbK^*) \cong \Z/2\Z, \quad \Aut_{\textup{gr}}(\bbK^+) \cong \bbK^*.$$

Let  $\sigma\in \Aut(C)$.  Then $\sigma(V_i) = V_{\tau(i)}$ for some permutation $\tau$ of $\{0,\dots,r-1\}$.  Our identifications $\kappa_i:V_i\cap\creg\to \Pic^0(C)$ induce maps 
$$
\kappa_{\tau(i)}\circ \sigma\circ \kappa_i^{-1}:\Pic^0(C)\to \Pic^0(C),\ 
[D] \mapsto a_\sigma([D])+\sigma(\frako_i)-\frako_{\tau(i)}.
$$
for each index $i$.  Each of these is an affine automorphism of $\Pic^0(C)$, given by composition of the above group automorphism $a_\sigma$ with translation by the divisor class 
$$
\frakb_i(\sigma) = [\sigma(\frako_i)-\frako_{\tau(i)}].
$$
We can therefore view the restriction of $\sigma$ to $V_i\cap\creg$ as  an ``affine automorphism'' $(a_\sigma, b_i(\sigma))$
\beq\label{affine}
\sigma(x) \sim a_\sigma([x_i-\frako_i])+\sigma(\frako_i) = a_\sigma([x_i-\frako_i])+ b_i(\sigma),
\eeq 
where $b_i(\sigma) := \kappa_{\tau(i)}^{-1}(\frakb_i(\sigma)) = \sigma(\frako_i)$.  It is clear that $\sigma$ is an affine automorphism of the whole group $\creg$ if and only if  $\frakb_i(\sigma)\in \Pic^0(C)$ is the same for all $i\in \Z / r\Z$. Or, in other words, $\sigma(\frako_i) -\frako_{\tau(i)}$ is a constant function from $\Z/r\Z$ to $\Pic^0(C)$.  Likewise, $\sigma$ defines a group automorphism of $C_{reg}$ if and only if the permutation $\tau:\Z/r\Z\to \Z/r\Z$ is a (group) automorphism and $\sigma(\frako_i) = \frako_{\tau(i)}$ for each $i$.

Finally let us discuss the special case of the group law on a reduced plane cubic curve, i.e. the case when $X = \cp^2$. By the adjunction formula, such a curve has arithmetic genus 1. So all of the above discussion applies with, of course, $r\leq 3$.  

\begin{prop}\label{P1} Assume that  $C$ is not isomorphic to the irreducible cuspidal cubic in characteristic 3 defined by the equation
$t_0t_2^2+t_1^3+t_1^2t_2 = 0$.   One can choose the points $\frako_i$ in such a way that 
 for any $x,y,z\in \creg$, no two lying on the same degree 1 component, 
\beq\label{p1}
x\oplus y\oplus z = 0 \Leftrightarrow x,y,z \ \text{are collinear}.
\eeq
\end{prop}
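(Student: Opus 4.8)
The plan is to rewrite each side of \eqref{p1} as a linear equivalence of Cartier divisors on $C$ and then to choose the origins $\frako_i$ so that the two rewritten conditions become identical. Put $h:=\calO_C(1)\in\Pic(C)$, the restriction of $\calO_{\cp^2}(1)$; it has multidegree $(\deg V_0,\dots,\deg V_{r-1})$ and total degree $3$. When $r=2$ I would first agree to index the two components so that $V_0$ is the line and $V_1$ the conic, which the cyclic indexing permits. The key point is a divisorial description of collinearity: for $x,y,z\in\creg$ with no two lying on the same degree $1$ component, $x,y,z$ are collinear if and only if $[x]+[y]+[z]=h$ in $\Pic(C)$. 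For the forward direction one observes that, under this restriction, a line $L$ through $x,y,z$ cannot be a component of $C$ (a line-component would contain all three points), so $L\cap C$ is an effective Cartier divisor of degree $3$ with $\calO_C(L\cap C)\cong h$ which contains and hence equals $x+y+z$. For the converse, $x+y+z$ is an effective Cartier divisor with $\calO_C(x+y+z)\cong h$; since $h^0(C,h)=\deg h=3$ by Lemma \ref{L1} and the cubic $C$ lies on no line, the restriction from the $3$-dimensional space $H^0(\cp^2,\calO_{\cp^2}(1))$ to $H^0(C,h)$ is injective, hence an isomorphism, so transporting the tautological section of $\calO_C(x+y+z)$ through the isomorphism with $h$ yields the restriction of a linear form whose zero divisor on $C$ is $x+y+z$; thus $x,y,z$ are collinear.

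Next I would choose the origins so that $\sum_i(\deg V_i)[\frako_i]=h$. If $r\ge 2$, pick $\frako_1,\dots,\frako_{r-1}$ arbitrarily among the nonsingular points of $V_1,\dots,V_{r-1}$; then $h-\sum_{i\ge 1}(\deg V_i)[\frako_i]$ has multidegree $(1,0,\dots,0)$, since $\deg V_0=1$, and Lemma \ref{L3} supplies a unique (automatically nonsingular) point $\frako_0\in V_0$ realizing it. If $r=1$, the condition reads $3[\frako_0]=h$, that is, $\frako_0$ must be a flex of $C$; such a point exists because multiplication by $3$ is surjective on $\pico(C)$ when $\pico(C)$ is an elliptic curve or $\bbK^*$, and also on $\bbK^+$ provided $\cha(\bbK)\neq 3$, while in the remaining case --- $C$ cuspidal and $\cha(\bbK)=3$ --- a direct inspection of the cuspidal plane cubics in characteristic $3$ shows that $C$ still carries a flex unless $C$ is projectively equivalent to $\{t_0t_2^2+t_1^3+t_1^2t_2=0\}$, the curve the proposition excludes. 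I expect this last point --- producing $\frako_0$ for a cuspidal cubic in characteristic $3$ --- to be the one genuine obstacle, and the reason the exceptional curve must be removed: there $\pico(C)\cong\bbK^+$ and multiplication by $3$ is the zero map, so $3[\frako_0]$ does not depend on $\frako_0$ and fails to equal $h$ for every choice of $\frako_0$ (indeed a computation of the tangent lines of the excluded curve shows the third intersection point of the tangent at a parameter $s$ is the point with parameter $s-1\neq s$, so there is no flex).

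Finally I would combine the two steps. Expanding the definition of $\oplus$ gives, for $x\in V_i$, $y\in V_j$, $z\in V_k$,
\[
  [x\oplus y\oplus z]=[x]+[y]+[z]+[\frako_{i+j+k}]-[\frako_i]-[\frako_j]-[\frako_k],
\]
so, since distinct points of $\creg$ are never linearly equivalent (Lemma \ref{L3}), $x\oplus y\oplus z=0$ holds exactly when $i+j+k\equiv 0\pmod r$ (so that $x\oplus y\oplus z$ lies in $V_0$) and $[x]+[y]+[z]=[\frako_i]+[\frako_j]+[\frako_k]$. When no two of $x,y,z$ lie on the same degree $1$ component, the congruence $i+j+k\equiv 0\pmod r$ forces the multiset $\{i,j,k\}$ to be precisely the one recording the multidegree of $h$ --- namely $\{0,0,0\}$ when $r=1$, $\{0,1,1\}$ when $r=2$, and $\{0,1,2\}$ when $r=3$ --- so that $[\frako_i]+[\frako_j]+[\frako_k]=\sum_l(\deg V_l)[\frako_l]=h$. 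Together with the divisorial characterization above, this gives that $x\oplus y\oplus z=0$ if and only if $[x]+[y]+[z]=h$, that is, if and only if $x,y,z$ are collinear, which is \eqref{p1}. The only other point that needs care is checking that ``no two on the same degree $1$ component'' is exactly the right hypothesis: without it one could take $x,y,z$ all on a single line-component of $C$, which are automatically collinear in $\cp^2$ yet for which $x\oplus y\oplus z=0$ imposes a nontrivial extra condition, so the equivalence would fail.
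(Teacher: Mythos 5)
Your proof is correct, and its skeleton is the paper's: choose the origins so that $\sum_i (\deg V_i)\,\frako_i$ lies in the class of $\calO_C(1)$, then translate $x\oplus y\oplus z=0$ into $x+y+z$ being linearly equivalent to a line section and identify that with collinearity. Where you genuinely diverge is in the two supporting steps. For existence of the origins, the paper works with explicit normal forms: it exhibits an inflection point on each irreducible cubic except $t_0t_2^2+t_1^3+t_1^2t_2=0$ in characteristic $3$, and for reducible cubics picks a line $\ell_0$ (tangent to the conic, or meeting the three lines) cutting out $\sum \frako_i\deg V_i$. You instead solve for $\frako_0$ on the line component via Lemma \ref{L3} after arbitrary choices on the other components, and in the irreducible case you get a flex from surjectivity of multiplication by $3$ on $\pico(C)$; this is less computational, reduces the case inspection to the cuspidal characteristic-$3$ curves only, and explains structurally why the excluded curve is excluded ($3$ kills $\bbK^+$, so $3[\frako_0]$ cannot be adjusted). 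You also make explicit the step the paper dispatches with ``reversing the logic'': that an effective degree-$3$ divisor supported in $\creg$ with class $\calO_C(1)$ is cut out by a line, using $h^0(C,\calO_C(1))=3$ (Lemma \ref{L1}) and injectivity of restriction from $H^0(\cp^2,\calO_{\cp^2}(1))$. Two small polish points: in the implication ``collinear $\Rightarrow$ sum zero'' for reducible $C$ you still need $i+j+k\equiv 0 \pmod r$, which follows by comparing the multidegree of $[x]+[y]+[z]$ with that of $\calO_C(1)$ (you state the multiset count only in the other direction); and the injectivity of $\kappa$ that you invoke is established in Lemma \ref{L1} and the Corollary rather than Lemma \ref{L3}. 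Like the paper, you implicitly read ``collinear'' divisorially (with tangency) when points coincide; that is the intended reading and not a defect of your argument.
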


\begin{proof} Recall that an inflection point on a reduced plane algebraic curve is a nonsingular point such that there exists a line which intersects  the curve at this point with multiplicity $\ge 3$. Suppose $C$ is an irreducible cubic curve.  If $C$ is nonsingular, we can reduce the equation of $C$ to its Weierstrass form  (see \cite{Silverman}) and find the inflection point at infinity. If $C$ is an irreducible nodal curve, we can reduce the equation of $C$ to the form 
$t_0t_1t_2 +t_1^3-t_2^3 = 0$. If $\cha(\bbK) \ne 3$, we find 3 inflection points $(0,1,\epsilon)$, where $\epsilon^3 =1$. If $\cha(\bbK) = 3$ there is only one  inflection point $(0,1,1)$. 

If $C$ is a cuspidal  curve, we can reduce it to the form 
$t_0t_2^2+t_1^3 = 0$ if $\cha(\bbK) \ne 3$. If $\cha(\bbK) = 3$, there is one more  isomorphism class represented by the curve  $t_0t_2^2+t_1^3+t_1^2t_2 = 0$. The curve $t_0t_2^2+t_1^3 = 0$ has  the inflection point $(0,0,1)$. In the second case, there are no inflection points.  

Choose the points $\frako_i$ such that the divisor $\sum \frako_i\deg V_i$ is cut out by a line $\ell_0$. This means that $\frako_0$ is an inflection point if $C$ is irreducible, or the line $\ell_0$ is a tangent line to the point $\frako_i$ on the component $V_i$ of degree 1.  We also choose $V_0$ to be a line component if $C$ is reducible.

Assume for the moment that $C$ is irreducible.  Then $x\oplus y\oplus z = 0$ means that $x+y+z \sim 3\frako$.  From our choice of $\frako$, we infer $\calO_C(x+y+z)\cong \calO_C(3\frako) \cong \calO_C(1)$.  
Hence $x+y+z$ is cut out by a line.  Reversing the logic, concludes the proof for irreducible $C$.

Now assume that $C$ is reducible and that $x\in V_{i_x}$, etc.  Then $x\oplus y\oplus z = 0$ only if $i_x + i_y + i_z = 0$ in $\Z/r\Z$.  Therefore, since no two of the points lie in the same linear component, we cannot have $i_x = i_y = i_z = 0$.  The same is true if we assume instead that $x+y+z$ is a divisor cut out by a line.  

Similarly, if $i_x = i_y \neq i_z$ then $r=2$ with $\deg V_0=1$ and $\deg V_1 = 2$, and $i_z=0$, $i_x=i_y=1$.  So $x\oplus y\oplus z = 0$ becomes $x+y+z \sim 2\frako_0 + \frako_1$, and the argument concludes as in the irreducible case.  We leave case where $i_x,i_y,i_z$ are all different to the reader.  
\end{proof}

\begin{cor} 
\label{projauts}
An automorphism $\sigma$ of a plane cubic $C$ defined by affine automorphisms 
$(a_\sigma,b_i(\sigma))$, $i\in \Z/r\Z$, is a projective automorphism if and only if $\sum b_i(\sigma)\deg V_i$ is cut out by a line, or, in other words, 
$$\bigoplus_{i\in \Z/r\Z} b_i(\sigma)\deg V_i = 0$$
in the group law on $\creg$.
\end{cor}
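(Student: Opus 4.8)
The plan is to reduce the claim to an identity in $\Pic^0(C)$ by means of condition $(*)$ and Theorem \ref{group law}, the essential input being that $C\hookrightarrow\cp^2$ is the embedding furnished by the complete linear system of $\calO_C(1)\eqdef\calO_{\cp^2}(1)|_C$. Throughout I would assume, as in Proposition \ref{P1}, that the origins $\frako_i$ are chosen so that $(*)$ holds, and I write $D_0=\sum_i(\deg V_i)\,\frako_i$ for the line section they determine, so that $\calO_C(1)\cong\calO_C(D_0)$.

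First I would establish that the projective automorphisms fixing $C$ are exactly the automorphisms $\sigma$ of $C$ with $\sigma^*\calO_C(1)\cong\calO_C(1)$. By Lemma \ref{L1} one has $h^0(C,\calO_C(1))=\deg\calO_C(1)=3$, and the restriction map $H^0(\cp^2,\calO(1))\to H^0(C,\calO_C(1))$ is injective because no line contains $C$, hence an isomorphism of $3$-dimensional spaces; so $C\hookrightarrow\cp^2$ is the morphism attached to $|\calO_C(1)|$. Consequently an isomorphism $\sigma^*\calO_C(1)\cong\calO_C(1)$ induces, uniquely up to a scalar, a linear automorphism of $H^0(C,\calO_C(1))$ whose projectivization lies in $\Aut(\cp^2)$ and restricts to $\sigma$ on $C$; conversely a projective automorphism fixing $C$ carries one line section of $C$ to another, hence fixes the class $[\calO_C(1)]\in\Pic(C)$. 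Since a projective automorphism also permutes the components preserving their degrees, I would note that if the permutation $\tau$ attached to $\sigma$ fails to preserve degrees then $\sigma$ is not projective and, moreover, $\sum_i(\deg V_i)\,b_i(\sigma)$ then has multidegree different from that of a line section, so neither side of the asserted equivalence holds; hence I may assume $\deg V_{\tau(i)}=\deg V_i$ for all $i$.

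Next I would translate the condition $\sigma^*\calO_C(1)\cong\calO_C(1)$, which in divisorial form reads $[\sigma(D_0)]=[D_0]$ in $\Pic(C)$. Writing $[\sigma(D_0)]=\sum_i(\deg V_i)[\sigma(\frako_i)]$, using \eqref{affine} in the form $[\sigma(\frako_i)]=[\frako_{\tau(i)}]+\frakb_i(\sigma)$ with $\frakb_i(\sigma)\in\Pic^0(C)$, and reindexing the sum $\sum_i(\deg V_i)[\frako_{\tau(i)}]=[D_0]$ by the degree-preserving permutation $\tau$, I obtain $[\sigma(D_0)]=[D_0]+\sum_i(\deg V_i)\,\frakb_i(\sigma)$. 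Thus $\sigma$ is projective if and only if $\sum_i(\deg V_i)\,\frakb_i(\sigma)=0$ in $\Pic^0(C)$. Finally, since the point $b_i(\sigma)=\sigma(\frako_i)$ lies on $V_{\tau(i)}$ with the multiplicity $\deg V_i=\deg V_{\tau(i)}$ demanded of a line section, and $\kappa(\sigma(\frako_i))=\frakb_i(\sigma)$, Theorem \ref{group law} with $d=1$ says exactly that $\sum_i(\deg V_i)\,b_i(\sigma)$ is cut out by a line if and only if $\sum_i(\deg V_i)\,\frakb_i(\sigma)=0$ in $\Pic^0(C)$, equivalently $\bigoplus_{i\in\Z/r\Z}b_i(\sigma)\deg V_i=0$ in the group law on $\creg$; combined with the previous sentence this is the desired statement.

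The one genuinely delicate point is the first step — identifying the abstract automorphisms of $C$ that fix $\calO_C(1)$ with the projective automorphisms fixing $C$ — but this is a standard consequence of $h^0(C,\calO_C(1))=3$ together with the very ampleness of $\calO_C(1)$; everything after that is bookkeeping with the group law and condition $(*)$.
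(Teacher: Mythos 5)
Your proof is correct and follows essentially the same route as the paper: characterize projective automorphisms as those fixing $\calO_C(1)\cong\calO_C(\sum\frako_i\deg V_i)$, then use $\sigma(\frako_i)=b_i(\sigma)$ and the group law (condition ($*$)/Theorem \ref{group law}) to translate the condition into $\bigoplus b_i(\sigma)\deg V_i=0$. You merely supply details the paper leaves implicit, namely the very-ampleness/$h^0=3$ justification that fixing $\calO_C(1)$ is equivalent to being induced by an element of $\Aut(\cp^2)$, and the observation that the permutation $\tau$ may be assumed degree-preserving.
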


\begin{proof} Let $\calO_C(1)$ be the restriction of $\calO_{\cp^2}(1)$ to $C$.  An automorphism $\sigma$ of $C$ is projective if and only it $\sigma^*(\calO_C(1)) \cong \calO_C(1)$. Since 
$\calO_C(1) \cong \calO_C(\sum \frako_i\deg V_i)$, this is equivalent to the condition that 
$\sum \sigma(\frako_i)\deg V_i$ is cut out by a line. But
$(a(\sigma,b_i(\sigma)(\frako_i) \sim  a_\sigma([\frako_i-\frako_i])+b_i(\sigma) = b_i(\sigma).$ This proves the assertion.
\end{proof}

\begin{rem}  Let us look at $\Aut_\textup{gr}(\creg)$ in more detail. We already know the structure of this group in the case when $C$ is irreducible. Assume that $C= V_0+V_1$ and $V_0$ intersects $V_1$ transversally. Then the tangent line $\la \frako,\frako_1\ra$ to $V_1$ is mapped under a group automorphism to the tangent line $\la \frako,\sigma(\frako_1)\ra$. If $\cha(\bbK)\ne 2,$ then there are two tangents line to a conic passing through a fixed point not on a conic. If $\cha(K) = 2,$ there is a unique point such that each line passing through this point is a tangent line. Since $V_0$ contains $\frako$ and is not tangent to $V_1$,  this case does not occur. If $\sigma(\frako_1) = \frako_1$, then $\sigma$ leaves four  lines invariant, the component $V_0$, two tangent lines, and the line joining the tangency points (the polar line of $\frako$ with respect to $V_1$). This implies that $\sigma$ is the identity and  easily shows that  
$$\Aut_\textup{gr}(\creg) \cong \Aut_\textup{gr}(V_0\cap\creg)\times \Z/2\Z \cong (\Z/2\Z)^2.$$ 
Next we assume that $V_0$ is tangent to $V_1$. One can reduce the equation of $C$ to the form $t_1(t_0^2-t_1t_2) = 0$ and assume that $\frako = (1,0,a)$, where $a = 0$  if $\cha(\bbK) \ne 2$ and $a = 0$ or $1$ otherwise. 
Easy computations find the group of automorphisms of the curve $C$. They consist of projective transformations $[x_0,x_1,x_2]\mapsto [\alpha x_0,x_1,\alpha^2 x_2]$ if $\cha(\bbK)\ne 2$ or $\cha(\bbK) = 2$ and $a= 1$. In the remaining case, the group consists of transformations 
$[x_0,x_1,x_2]\mapsto [\alpha x_0+\beta x_1,x_1,\beta^2x_1+\alpha^2 x_2]$. The natural homomorphism 
$\Aut_{\gr}(\creg) \to \Aut_{\gr}(V_0\cap\creg)$ 
is surjective and its kernel is trivial in the first case, and isomorphic to $\bbK^+\times \Z/2\Z$ in the second case. 

Finally, let us assume that $C$ is the union of three lines. We reduce the equation of $C$ to $t_0t_1t_2 = 0$ or $t_1t_2(t_1+t_2) = 0$ and compute the group of projective automorphisms leaving the point $\frako = (0,1,0)$ invariant. Easy computations show that $\Aut_{\gr}(\creg) \to \Aut_{\gr}(V_0\cap\creg)$ is surjective, and the kernel is isomorphic to $\Z/2\Z$ in the first case, and $\bbK^+\times \Z/2\Z$ in the second case.
\end{rem}

\bibliographystyle{/home/jeff/tex/mjo}
\bibliography{refs}
\end{document}